\documentclass[11pt,reqno,a4paper]{amsart}

\usepackage[margin=0.92in]{geometry}
\usepackage{amsmath,amssymb,amsthm}
\usepackage[numbers,square]{natbib}
\usepackage{mathtools}
\usepackage{graphicx}
\usepackage{tikz}
\usepackage{tikz-cd}
\usepackage{booktabs}
\usepackage{array}
\usepackage[colorlinks=true,linkcolor=blue,citecolor=blue]{hyperref}
\usepackage[all]{hypcap}

\usepackage{enumitem}

\newtheorem{theorem}{Theorem}[section]
\newtheorem{lemma}[theorem]{Lemma}
\newtheorem{proposition}[theorem]{Proposition}

\newtheorem*{maintheorem}{Main Theorem}
\theoremstyle{definition}
\newtheorem{definition}[theorem]{Definition}
\newtheorem{remark}[theorem]{Remark}

\usetikzlibrary{arrows.meta}

\newcommand{\C}{\ensuremath{\mathbb{C}}}
\newcommand{\R}{\ensuremath{\mathbb{R}}}

\newcommand{\g}[1]{\ensuremath{\mathfrak{#1}}}
\newcommand{\s}[1]{\ensuremath{\mathsf{#1}}}

\DeclareMathOperator{\tr}{tr}

\DeclareMathOperator{\Id}{Id}
\DeclareMathOperator{\Ad}{Ad}

\DeclareMathOperator{\ad}{ad}
\DeclareMathOperator{\Exp}{Exp}
\DeclareMathOperator{\Int}{Int}
\DeclareMathOperator{\spann}{span}
\DeclareMathOperator{\diag}{diag}
\DeclareMathOperator{\Ric}{Ric}
\DeclareMathOperator{\ric}{ric}

\DeclareMathOperator{\Isom}{Isom}

\renewcommand{\H}{\ensuremath{\mathbb{H}}}

\title[Inhomogeneous Einstein metrics on complex projective spaces]{Inhomogeneous Einstein metrics\\ on complex projective spaces}
\author[G.~Cao-Labora]{Gonzalo Cao-Labora}
	\address{Institute of Mathematics, EPFL, Station 8, Lausanne, VD 1015, Switzerland}
	\email{gonzalo.caolabora@epfl.ch}
	\author[A.~Rodr\'iguez-V\'azquez]{Alberto~Rodr\'iguez-V\'azquez}
	\address{Department of Mathematics, Universidade de Santiago de Compostela, Spain}
	\email{a.rodriguez@usc.es}
	
	\thanks{The first author G.~Cao-Labora was supported by the Swiss State Secretariat for Education, Research and Innovation (SERI) under contract number MB22.00034 through the project TENSE. The second author A.~Rodr\'iguez-V\'azquez was supported by the Horizon Europe research and innovation program under Marie Sklodowska-Curie Actions with grant agreement 101149711 - HOLYFLOW and the program Ram\'on y Cajal with grant number RYC2024-050605-I funded by MICIU/AEI/10.13039/501100011033 and by ESF+.}
\subjclass[2020]{53C25, 53C30, 	53C55}
\keywords{Einstein metrics, complex projective space, cohomogeneity one}
\date{}

\begin{document}

\begin{abstract}
The only Einstein metrics currently known on complex projective spaces are  homogeneous: the Fubini-Study metric, arising via the Hopf fibration; and, in odd complex dimensions, Ziller's metric, obtained as a canonical variation along the twistor fibration over the quaternionic projective space. In 1965, Berger proved that the Fubini-Study metric is the unique K\"ahler--Einstein metric on the complex projective space, and posed the question of whether other Einstein metrics exist. We construct the first inhomogeneous Einstein metrics on $\C\s P^n$ for $3\leq n \leq 7$, answering Berger's question affirmatively for the complex even-dimensional $\C\s P^4$ and $\C\s P^6$. 
\end{abstract}

\maketitle
\vspace{-0.7cm}

\noindent


\section{Introduction}
A Riemannian manifold $(M, g)$ is Einstein if its Ricci tensor satisfies $\Ric(g)=\lambda g$ for some constant $\lambda\in\R$.  We now review some known results on Einstein manifolds.

When $\lambda < 0$, Böhm and Lafuente~\cite{BL23} proved that any homogeneous such $M$ is diffeomorphic to~$\R^n$.  Recall that a Riemannian manifold $(M,g)$ is Kähler if it has an almost Hermitian structure $J$ such that $\nabla J=0$. From now on we further assume that $M$ is compact.  If $(M, g, J)$ is Kähler with first Chern class satisfying $c_1(M) < 0$, or equivalently, the canonical bundle $K_M$ is ample, so that $-c_1(M)$ is a Kähler class, then, by independent results of Aubin~\cite{A78} and Yau~\cite{Y78}, $M$ admits a Kähler-Einstein metric with $\lambda<0$. The only known compact examples with $\lambda<0$ that are both non-K\"ahler and not locally homogeneous are constructed using Dehn fillings of hyperbolic cusps~\cite{An06, Bam12}, and certain examples with negative sectional curvature due to~\cite{FP20} in dimension $4$ and~\cite{HJ24} in every dimension $\geq 4$.

In the case that $\lambda=0$,  it remains an open question whether every simply connected compact Ricci-flat manifold has special holonomy. It is relevant to notice that under the special holonomy assumption the second order equation $\Ric(g)=\lambda g$ reduces to a first order equation. The case of holonomy $\s{SU}_n$ corresponds to Ricci-flat Kähler metrics, for which Yau's resolution of the Calabi conjecture~\cite{Y78} yields a vast class of examples on every compact Kähler manifold with vanishing first Chern class; see also~\cite{Ba94, CDLS88} for explicit constructions.  A non-exhaustive list of constructions of compact manifolds for the remaining special holonomies is: \cite{Be83, OG99} (with holonomy $\s{Sp}_n$), \cite{J96_1,J96_2, CHNT15, JK21}  (with holonomy $\s G_2$ or $\s{Spin}_7$).

Let us now assume $\lambda > 0$. In the homogeneous case, the restriction to $\lambda > 0$ is natural, as $\lambda = 0$ forces $g$ to be flat~\cite{AK75}, and Bochner's theorem implies that there is no compact homogeneous Einstein manifold with $\lambda < 0$. The homogeneous setting is by now well understood, with abundant general constructions~\cite{WZ85, WZ86} as well as obstructions~\cite{B04}, though interesting questions remain unanswered (see~\cite{BK23} for a list of open problems in this setting). In the K\"ahler case, a major source of compact Einstein metrics comes from the theorem of Chen, Donaldson and Sun: a compact K\"ahler manifold $(M,g,J)$ with $c_1(M) > 0$ admits a Kähler-Einstein metric if and only if it is $K$-polystable~\cite{CDS15_1, CDS15_2, CDS15_3}. The simplest examples of $K$-polystable complex manifolds with $c_1(M)>0$ are the complex projective spaces $\C\s P^n$, which play a fundamental role in complex, Kähler and algebraic geometry. Indeed, the fundamental example of a Kähler-Einstein metric is provided by the Fubini-Study metric on~$\C\s P^n$, which is homogeneous, $\s{SU}_{n+1}$-invariant, and is known to be the unique K\"ahler--Einstein metric on~$\C\s P^n$~\cite{Ber65, Ma72}.

Berger~(see~\cite[Remark~7.4]{Ber65}) raised the question whether  $\C\s P^n$ admits a second Einstein metric. In odd complex dimension, such a metric does
exist. This rests on a higher-dimensional analogue of the twistor fibration
$\C\s P^3 \to \s S^4$, which gives an example of a twistor space, a notion
introduced by Penrose~\cite{Pen67} in the setting of
relativistic physics and later given a Riemannian formulation by
Atiyah, Hitchin, and Singer~\cite{AHS78}. Realizing $\C\s P^{2n+1}$ as the twistor space
over the quaternionic projective space $\H\s P^n$, Ziller obtained in~\cite{Z82} a homogeneous
$\s{Sp}_{n+1}$-invariant Einstein metric, by taking the Fubini-Study metric of minimal sectional curvature~$1$ and
shrinking the $\s S^2$-fibers of the twistor fibration by a factor of~$\tfrac{1}{n+1}$, see also~\cite[Example~9.83]{Be08}. Ziller~\cite{Z82} further showed that the
Fubini-Study and the $\s{Sp}_{n+1}$-invariant metric exhaust all homogeneous Einstein metrics
on $\C\s P^{2n+1}$.
\smallskip

In this article we provide the first examples of inhomogeneous Einstein metrics on~$\C\s P^n$, answering Berger's question for the even complex dimensions $n=4,6$ affirmatively.

\phantomsection\label{th:main}
\begin{maintheorem}
There is an inhomogeneous Einstein metric $g$ on $\C\s P^n$ for each $n\in\{3,4,5,6,7\}$.
\end{maintheorem}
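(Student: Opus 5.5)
The plan is to build $g$ as a cohomogeneity one metric, so that the Einstein equation becomes a system of ODEs. Take a compact group $\s G$ acting on $\C\s P^n$ with cohomogeneity one, with two singular orbits $\s G/\s K_\pm$ and principal orbit $\s G/\s H$. The natural candidate is $\s G=\s{SU}_2\times\s{SU}_{k}$ or $\s{Sp}_1\times\s{Sp}_m$ acting on $\C\s P^n=\mathbb P(\C^2\otimes\C^k)$ or on $\mathbb P(\C^{p}\oplus\C^{q})$. For concreteness, use $\s G=\s{Sp}_1\cdot\s{Sp}_{m}$ acting on $\C^{2}\otimes\C^{2m}$, or $\s{U}_p\times\s{U}_q$ acting on $\C^{p+q}$. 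In the latter case the orbit space is an interval: the singular orbits are $\C\s P^{p-1}$ and $\C\s P^{q-1}$, and the principal orbit is a circle-bundle-type homogeneous space $\s S^{2p-1}\times\s S^{2q-1}/\s S^1$. A $\s G$-invariant metric is then $dt^2+g_t$, where $g_t$ ranges over a finite-dimensional family of $\Ad_{\s H}$-invariant inner products. With enough symmetry-breaking, for example several summands so that $g_t$ is not forced to be Fubini--Study-like, the Einstein condition becomes a system of second order ODEs in the metric functions $f_i(t)$, together with a first order conservation-law constraint.

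The key steps are the following.

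\textbf{Step 1.} Choose the action and compute the isotropy decomposition $\g g=\g h\oplus\g m_1\oplus\dots\oplus\g m_r$. Use the Eschenburg--Wang formalism to write $\Ric$ in terms of $f_i,f_i',f_i''$ and the structure constants.

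\textbf{Step 2.} Verify smoothness conditions at both singular orbits. By Eschenburg--Wang, for any prescribed initial singular-orbit metric and free second-order data there is a one-parameter (or few-parameter) family of local smooth Einstein solutions near each singular orbit.

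\textbf{Step 3.} Identify the known solutions inside this family: Fubini--Study, and for odd $n$ Ziller's metric. This fixes the normalization and gives reference trajectories.

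\textbf{Step 4.} Find a solution starting smoothly at $\s G/\s K_-$ that closes up smoothly at $\s G/\s K_+$. Globally this is a shooting problem. I would set it up as matching the two local families at the midpoint (or at a symmetric hypersurface if $p=q$, where one can impose a reflection symmetry and require only $f_i'=0$ at $t_0$). Existence then follows from an intermediate value or degree argument on the parameter space, or from a rigorous computer-assisted argument (interval arithmetic plus a Newton--Kantorovich contraction) around a numerically found solution.

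\textbf{Step 5.} Show the solution is not Fubini--Study or Ziller, hence not homogeneous. This can be checked by comparing the Einstein constant normalized by volume: compute $\lambda\,\mathrm{vol}^{2/(2n)}$ and check it differs from the homogeneous values. Alternatively, show that the isometry group is exactly $\s G$, which acts with cohomogeneity one.

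The main obstacle is Step 4. The shooting is genuinely nonlinear, and the dimension restriction $3\le n\le 7$ suggests that existence is established numerically-rigorously, or by a winding-number count that only works in a range of dimensions. I would first try the symmetric case, with a reflection symmetry of the orbit space, to reduce to a one-parameter shoot. In this case one tracks the sign of a single quantity at the midpoint as the initial parameter varies between the Fubini--Study value and a degenerate limit, and obtains a zero by continuity. If that fails, I would fall back on computer-assisted proofs for each $n$ separately.
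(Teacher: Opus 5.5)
Your outline has the right general shape (cohomogeneity-one reduction, smooth local families at the two singular orbits, matching, rigorous numerics). But Step 4, which is the entire content of the theorem, is only asserted, and the concrete choices you make would not produce it.

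You never fix the action, and your candidates are not the one that works. The tensor-product actions only reach odd $n$, since $\mathbb P(\C^2\otimes\C^k)=\C\s P^{2k-1}$ and $\mathbb P(\C^2\otimes\C^{2m})=\C\s P^{4m-1}$. So the even cases $n=4,6$, which are the ones that answer Berger's question, are out of reach. For $\s U_p\times\s U_q$ you give no evidence that a non-homogeneous Einstein solution exists at all. Hence ``interval arithmetic plus Newton--Kantorovich around a numerically found solution'' presupposes the very object in question. Your Step 3 also fails in that family. On $\C\s P^{2m+1}$, Ziller's metric has isometry group of rank $m+1$. This is smaller than the rank $p+q-1=2m+1$ of the effective $\s U_p\times\s U_q$-action, so Ziller's metric is not in that family.

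The paper uses $\s{SO}_{n+1}$ acting on $\C\s P^n$ via $\C^{n+1}=\R^{n+1}\otimes\C$. The singular orbits are the totally geodesic $\R\s P^n$ and the quadric $\s Q_{n-1}$, the principal orbit is $T_1\R\s P^n$, and the ansatz is diagonal in three functions $h,f_1,f_2$. The window $3\le n\le 7$ is a feature of this specific action, not something a generic shooting scheme predicts: numerically there are no solutions for $n\ge 8$, and $n=2$ degenerates because $\det M_k=0$. Incidentally, for $m=1$ your $\s{Sp}_1\cdot\s{Sp}_1=\s{SO}_4$ acting on $\C^2\otimes\C^2$ is exactly the paper's action on $\C\s P^3$.

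Your proposed reduction to a one-parameter shoot also does not work. With a three-function ansatz, the smooth local Einstein solutions at each singular orbit form a two-parameter family even after normalizing $\lambda$. In the paper these are $(b_2,u_1)$ at $\R\s P^n$ and $(\alpha_{-2},\beta_{-1})$ at $\s Q_{n-1}$. So even a reflection-symmetric problem ($p=q$, again only odd $n$) imposes two midpoint conditions on two parameters, and a one-parameter intermediate value argument does not apply. For $\s{SO}_{n+1}$ there is no reflection at all, since the singular orbits have dimensions $n$ and $2n-2$.

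The paper instead faces the full four-dimensional matching problem:
\begin{itemize}
\item It uses the first integral to reparametrize by the mean curvature $H$, giving a four-dimensional non-autonomous system.
\item It builds convergent series at $H=\mp\infty$ with explicit Catalan-type bounds on coefficients, tails and parameter derivatives. This is how the singular endpoints are handled rigorously.
\item It propagates these with a validated ODE solver to $H=0$.
\item It proves that the matching map $\mathcal G\colon\R^4\to\R^4$ has a zero via Krawczyk's theorem.
\end{itemize}

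Your Step 5 is a legitimate alternative. Comparing the scale-invariant quantity $\lambda\,\mathrm{Vol}^{1/n}$ with the Fubini--Study and Ziller values works given Ziller's classification, provided the volume is enclosed rigorously. The paper instead notes that all homogeneous Einstein metrics on $\C\s P^n$ are geodesic orbit, so the Jacobi operator would have constant spectrum along $\gamma$. It then shows that $h''/h$ changes sign.
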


The metric $g$ on $\C\s P^n$ is not Hermitian (see Proposition~\ref{prop:notHermitian}), and thus, it is not conformally Kähler, see Remark~\ref{rem:Hermitian}. Moreover, it is  not nearly K\"ahler with respect to the standard non-integrable almost complex structure on the twistor space $\C\s P^{n}$ with $n$ odd, see Remark~\ref{rem:nonNK}. It turns out that $g$ does not have non-negative sectional curvature~(see~Remark~\ref{rem:sec>0}), and it has smaller Yamabe energy than the homogeneous examples, see Table~\ref{table:yamabe} in Appendix~\ref{sec:appendix_A}. Furthermore, the metric $g$ is $\s{SO}_{n+1}$-invariant and of cohomogeneity one. The two singular orbits of the cohomogeneity-one action of $\s{SO}_{n+1}$ are a totally geodesic Lagrangian $\R\s P^n$ and the complex quadric $\s Q_{n-1}$, which is Kähler; while the principal orbits are diffeomorphic to the unit tangent bundle $T_1 \R\s P^n$. We observe numerically no $\s{SO}_{n+1}$-invariant examples for $n \ge 8$ (see Appendix~\ref{sec:appendix_A}); moreover, the dimensions $n = 3$ and $n = 7$ seem special for topological reasons, as the principal orbits are diffeomorphic to products in that case  (see~Remark~\ref{rem:n3n7}), and $n = 2$ is special for analytical reasons (see Remark~\ref{rem:nge3}). Numerical plots of the ansatz functions defining the metric $g$ for each $n\in\{3,4,5,6,7\}$ are shown in Figure~\ref{fig:plots}.

Our examples have real dimension strictly greater than $4$. It must be pointed out that Einstein $4$-manifolds are rather special, as dimension four is the first in which the Einstein condition does not force constant sectional curvature. We refer
to~\cite{An10} for a survey on Einstein $4$-manifolds. Exploiting the particular structure of
the curvature operator in dimension~$4$, Broder~\cite{Br26} classified
the Einstein metrics of cohomogeneity one on closed $4$-manifolds. This,
combined with the \hyperref[th:main]{Main Theorem}, implies that
$n = 3$ is the smallest value for which $\C\s P^n$ admits an
inhomogeneous cohomogeneity-one Einstein metric. It is also worth noting that a Hermitian--Einstein metric on $\C\s P^2$ must be isometric to the Fubini-Study by~\cite[Theorem~A]{L12} and thus homogeneous. To our knowledge, it remains an open question whether every Hermitian--Einstein metric on $\C\s P^n$ with $n > 2$ is homogeneous.

The first inhomogeneous Einstein metric on a compact manifold was found by
Page~\cite{Pa78} on the Del Pezzo surface $\C\s P^2 \# \overline{\C\s P^2}$, the blow-up of $\C\s P^2$
at a point. Page's metric is of cohomogeneity one, and it inspired a family of cohomogeneity-one Einstein metrics in dimensions greater than four~\cite{BB82, PP, B98, Ch24}. For other Del Pezzo surfaces,
Tian~\cite{T90} proved that a Kähler-Einstein metric exists if and only if the surface is not the blow-up of $\C\s P^2$ at one or two points. In contrast with the metric we construct on $\C\s P^n$ for $n \in \{3, 4, 5, 6, 7\}$, the two exceptional
 Del Pezzo surfaces carry Hermitian, conformally-K\"ahler (non-Kähler) Einstein metrics: Page's metric on
$\C\s P^2 \# \overline{\C\s P^2}$, and, on $\C\s P^2 \# 2\overline{\C\s P^2}$, an Einstein metric of cohomogeneity two constructed by Chen-LeBrun-Weber~\cite{CLW08}.

Much of the attention in the search for Einstein metrics has been
directed towards spheres: firstly, B\"ohm~\cite{B98} found inhomogeneous Einstein metrics on $\s S^n$ for $n \in \{5, 6, 7, 8, 9\}$ with cohomogeneity one, and later Boyer, Galicki, Ghigi and Koll\'ar~\cite{BGK05,GK} produced inhomogeneous
Einstein metrics on every odd-dimensional sphere $\s S^{2n+1}$ with $n\ge 2$. Foscolo and Haskins~\cite{FH17} constructed a
cohomogeneity-one nearly K\"ahler metric on $\s{S}^6$, and thus, Einstein. More recent constructions yield cohomogeneity-one Einstein metrics on $\s S^{10}$ and $\s S^{12}$~\cite{NW23, BH, W26}.

In this article, we develop a new strategy for constructing cohomogeneity-one Einstein metrics. Most prior work has focused on $\s{SO}_k \times \s{SO}_{n+1-k}$-invariant metrics on $\s S^n$~\cite{B98, NW23, BH, W26},
satisfying $1 \leq k \leq n-1$. From the algebraic viewpoint, this is a
particularly simple action: the simplest cohomogeneity-one action on the sphere after the isotropy action itself, whose associated ansatz consists of the
radially symmetric metrics on $\s S^n$ (the only Einstein representative
being the round metric). The ansatz corresponding to the $\s{SO}_k \times \s{SO}_{n+1-k}$-action depends on only two functions, whereas the $\s{SO}_{n+1}$-invariant metrics on $\C\s P^n$ that we consider depend on three, see~\eqref{eq:ansatz}. This more complicated ansatz brings additional difficulties. The smoothness conditions at each singular orbit imply that the family of smooth local solutions around each singular orbit is parametrized by two parameters, rather than one. As a consequence, obtaining global smooth Einstein metrics amounts to a four-dimensional matching problem in place of a two-dimensional one. The two-dimensional setting is better suited to shooting arguments, as in B\"ohm~\cite{B98} and Nienhaus--Wink~\cite{NW23},
although even there computer assistance becomes highly relevant when the sphere $\s S^n$ has dimension $n>10$, as in~\cite{BH} and~\cite{W26}.

Our approach is also computer-assisted, but in a fundamentally different way. Rather than using a Schauder fixed-point theorem on a Banach space to produce an exact solution near a highly accurate numerical one as in~\cite{BH, W26}, we use computer-assistance via a fixed-point in a finite dimensional space (concretely, of dimension $4$). The computer enters only through an argument to show that a matching map $\mathcal G  \colon U \subset \mathbb R^4 \to \mathbb R^4$ has a zero. Consequently, Lemma~\ref{lem:matchingzero} is the only place in the paper where the computer is invoked. We believe that our methods can be extended to ansätze depending on more functions, opening the way to the discovery of further inhomogeneous Einstein metrics. We can state our strategy as follows:
\begin{enumerate}
	\item[\textup{(1)}] We formulate the problem as the existence of a global solution to an 	ODE on an interval $[0, T]$ satisfying prescribed smoothness conditions 	at both endpoints. The resulting ODE is autonomous and six-dimensional (corresponding to three second order ODEs).

	\item[\textup{(2)}] We employ a conserved quantity~(see~Remark~\ref{rem:firstintegral}), and reparametrize using the mean curvature $H \in (-\infty, \infty)$, obtaining a four-dimensional non-autonomous system in $H$ which is equivalent to the original one.
	
	\item[\textup{(3)}] We construct the solution for $H \in (-\infty, H_-]$ and $H \in [H_+, \infty)$ with $-H_-, H_+$ sufficiently large, each of the solutions depending on two parameters. We do this by giving explicit recurrences for the coefficients in a series, showing its convergence, and obtaining explicit bounds on the remainders.
	
	\item[\textup{(4)}] We define a four-dimensional matching function $\mathcal G$ by constructing our local solutions at $\pm \infty$ (each depending on two parameters) and testing the difference between a forward-propagated ODE from $H_-$ to $0$ and a backward-propagated one from $H_+$ to $0$.
	
	\item[\textup{(5)}] We prove that $\mathcal G$ has a zero by applying Krawczyk's fixed point theorem to a small hypercube in $\R^4$.
\end{enumerate}

The organization of the paper is as follows. Section \ref{sec:prelim} introduces the necessary preliminaries. Step~(1) of our strategy is developed in Section \ref{sec:so_n+1-action}, culminating in the ODE system \eqref{eq:E1}--\eqref{eq:E4} that $\s{SO}_{n+1}$-invariant Einstein metrics satisfy, and in the identification of the corresponding smoothness conditions at the singular orbits. In Section \ref{sec:H-rep}, we carry out Step~(2), obtaining an equivalent four-dimensional ODE system parametrized by~$H$ (Subsection \ref{subsec:rewriting} translates the ODE and Subsection~\ref{subsec:smoothH} the smoothness conditions). In Section~\ref{sec:Hexpansions} we construct the local solutions around $H = \pm \infty$ by a convergent series expansion, with explicit remainder bounds completing Step~(3). In Section~\ref{sec:krawczyk} we define our matching function (Step~(4)), and combine Krawczyk's fixed-point theorem with computer-assistance to show it has a zero~(Step~(5)), concluding the proof of the \hyperref[th:main]{Main Theorem}. The only result with a computer-assisted proof is Lemma \ref{lem:matchingzero}, and the code can be found in file \texttt{CAP\_Einstein\_CPn\_SubmissionVersion.ipynb} in the TeX source of our arXiv version. Finally, in Appendix~\ref{sec:appendix_A}, we include some numerical plots of the solutions we found and we compare the Yamabe energies of all the currently known Einstein metrics on complex projective spaces. 

\subsection*{Acknowledgments:} We wish to thank Prof.~Fine for very helpful discussions and Prof.~Böhm for insightful comments that helped us sharpen the exposition at several points. The second author is grateful to the geometry group at the Universidade de Santiago de Compostela for their hospitality in hosting a course he gave on cohomogeneity-one manifolds in January 2026 and the discussions around which sparked this project.

\subsection*{AI usage statement:} LLMs, in the form of Claude Opus 4.6, 4.7 and 4.8 were used in early stages of the research process to code simulations. LLMs were also used in helping to produce the code which gave Figure~\ref{fig:c1_action}, and to detect typos and minor editing in earlier versions of this manuscript. The authors take full responsibility for all content.

\section{Preliminaries}
\label{sec:prelim}
\subsection{Cohomogeneity-one invariant metrics and smoothness conditions}
\label{subsec:c1theory}
In this subsection, we introduce and explain some well-known facts about cohomogeneity-one actions. For more details one can consult~\cite[\S 6.3]{ABbook}.
 
Let $\s{G}$ be a compact Lie group acting with cohomogeneity one on $M$, that is, the principal $\s{G}$-orbits have codimension one in $M$. Let us denote by $M^\mathrm{reg}$ the union of all principal orbits, by  $M/\s{G}$ the orbit space, and assume that $M/\s{G}$ is  homeomorphic to $[0,T]$ for some $T>0$. Let $\gamma\colon [0,T]\rightarrow M$ be a normal geodesic with $\gamma(t)\in M^{\mathrm{reg}}$ for $t\in(0,T)$ where $\gamma(0)=p_{-}$ and $\gamma(T)=p_+$  lie in the two distinct singular orbits. We denote by $\s{K}_{\pm}$ the isotropy group of $p_{\pm}$ and by $\s{H}$ the principal isotropy group, that is, the isotropy group of $\gamma(t)$ for any $t\in(0,T)$. Let us denote by $D_{\pm}$ a normal slice at $p_{\pm}$, respectively, see~\cite[\S 3.2]{ABbook} for more information on slices and the slice theorem. As the $\s{G}$-action is of cohomogeneity one, $\s K_{\pm}$ acts with cohomogeneity one on the normal slice $D_{\pm}$, and thus $\s{K}_{\pm}$ acts transitively on the unit spheres of $D_{\pm}$ implying that $\s{S}^{\ell_{\pm}}=\s{K}_{\pm}/\s{H}$, where $\ell_{\pm}=\dim(D_{\pm})-1$. With this notation, we define the group diagram
\begin{equation*}
	\begin{tikzcd}
		& \mathsf{G} & \\
		\mathsf{K}_- \arrow[ur] & & \mathsf{K}_+ \arrow[ul] \\
		& \mathsf{H} \arrow[ul] \arrow[ur] &
	\end{tikzcd}
\end{equation*}
where the arrows denote the natural inclusions. We also denote the above diagram by $\s{H}\leq\{\s K_{-}, \s{K}_{+}\} \leq \s{G}$. Notice that this diagram determines two homogeneous sphere bundles, one over each singular orbit $\s{G}/\s{K}_{\pm}$, namely
\begin{equation}
\label{eq:spherebundles}
\s{K}_{\pm}/\s{H}\rightarrow \s{G}/\s{H}\xrightarrow{\pi_{\pm}}\s{G}/\s{K_{\pm}}, \enspace \text{where $\pi_{\pm}(g\s{H})=g \s{K}_{\pm}$.}  
\end{equation}
Conversely, given compact Lie groups $\s{H}\leq\{ \s{K}_{-}, \s{K}_{+}\} \leq \s{G}$ such that $\s{K}_{\pm}/\s{H}$ are spheres, we can glue the disk bundles with fibers closed disks $\bar D_{\pm}$ of dimensions $\ell_{\pm}+1$ induced by the spherical bundles in \eqref{eq:spherebundles} along their boundaries to get a smooth manifold $M$ for which $\s G$ acts with cohomogeneity one.

 By continuity, any $\s{G}$-invariant metric on $M$ is determined by its restriction to $M^\mathrm{reg}$, and in our case $M^\mathrm{reg}=M\setminus(\s{G}/\s{K}_{+} \cup \s{G}/\s{K}_{-})$. Now we can choose a geodesic $\gamma$ that intersects all $\s{G}$-orbits orthogonally. The choice of this geodesic on $M$ determines a $\s{G}$-equivariant diffeomorphism $M^\mathrm{reg}\cong(0,T)\times \s{G}/\s{H}$ for which $\gamma(t)=(t,e\s{H})$. Thus, we can write any $\s{G}$-invariant metric $g$ on the regular part as
\begin{equation}
	\label{eq:ansatzcohomo1metric}
	g=dt^2 + g_t, \quad t\in(0,T); 
\end{equation}
where $g_t$ is a smooth $1$-parameter family of $\s G$-invariant metrics on $\s{G}/\s{H}$ satisfying some smoothness conditions as $t\to 0$ and $t\to T$, which we explain below.

Let $\mathcal{Q}$ denote a bi-invariant metric on $\s{G}$ and let $\g{n}$ be the orthogonal complement of $\g{h}$ in $\g{g}$. Thus, for each $t\in(0,T)$, we have the orthogonal splitting
\[T_{\gamma(t)} M= \R \dot{\gamma}(t) \oplus T_{\gamma(t)} \Sigma_t, \]
where $\dot{\gamma}(t)$ is the velocity of $\gamma$ and $\Sigma_t$ is the hypersurface $\s{G}\cdot \gamma(t)$. It is a standard fact in the theory of homogeneous spaces that we can identify $\g{n}$ and $T_{\gamma(t)}\Sigma_t$ for each $t\in(0,T)$ by considering $ X\in\g{n}\rightarrow X^*(\gamma(t))$,
where $X^*$ is the Killing vector field induced by $X\in\g{n}$, that is, $X_p^*:=\frac{d}{ds}\rvert_{ s=0}\Exp(s X)\cdot p$ for each $p\in M$.  Let us consider a smooth $1$-parameter family of $\mathcal{Q}$-symmetric $\Ad(\s{H})$-equivariant endomorphisms $\mathcal{P}_t\colon \g{n}\rightarrow \g{n}$ such that
\[g(X_{\gamma(t)}^*,Y_{\gamma(t)}^*)=g_t(X,Y)=\mathcal{Q}(\mathcal{P}_t X, Y), \quad \text{where $X,Y\in \g{n}$ and $t\in(0,T)$}. \]

The product structure $M^{\mathrm{reg}} \cong (0,T) \times \s G/\s H$ together with the warped form $g = dt^2 + g_t$ gives the block-diagonal volume form $d\mathrm{vol}_g = dt \wedge d\mathrm{vol}_{g_t}$. By Fubini's theorem,
\begin{equation}
	\label{eq:volformula}
	\mathrm{Vol}(M, g) = \int_{M^{\mathrm{reg}}} d\mathrm{vol}_g = \int_0^T   \int_{\Sigma_t} d\mathrm{vol}_{g_t}  dt = \int_0^T \mathrm{Vol}(\Sigma_t, g_t) dt=\mathrm{Vol}(\s G/\s H, \mathcal{Q}) \int_0^T \sqrt{\det \mathcal{P}_t} dt,
\end{equation}
where we have used that by $\s G$-invariance, the volume density of $g_t$ relative to that of $\mathcal{Q}$ equals the constant $\sqrt{\det \mathcal{P}_t}$, so $\mathrm{Vol}(\Sigma_t, g_t) = \sqrt{\det \mathcal{P}_t}\, \mathrm{Vol}(\s G/\s H, \mathcal{Q})$.

We also consider the shape operator $\mathcal{S}_t$ of the orbit $\Sigma_t$, which is given by $\mathcal{S}_t X^*=-\nabla_{X^*}\dot{\gamma}$, where $X\in\g{n}$, and $\nabla$ denotes the Levi-Civita connection associated with the cohomogeneity-one metric $g$. Notice that we follow the opposite sign convention for the shape operators from~\cite{B98}. However, we follow the same convention as in~\cite{GZ02}, and thus, by~\cite[Equation (1.6)]{GZ02}, we have
\begin{equation}
\label{eq:shape}
\mathcal{S}_t(X)=-\frac{1}{2}\, \mathcal{P}_t^{-1} \mathcal{P}_t' X, \quad \text{for each $X\in \g{n}$}.
\end{equation} 
Let us choose an $\Ad(\s{H})$-invariant splitting of $\g{n}$, namely:
\begin{equation}
\label{eq:ndescomposition}
\g{n}=\g{n}_0\oplus \g{n}_1\oplus \cdots \oplus \g{n}_r,
\end{equation}
where $\s{H}$ acts trivially on $\g{n}_0$, and irreducibly on $\g{n}_i$ for $i\ge 1$. In this work we will consider diagonal metrics with respect to a splitting of $\g{n}$, that is,  we will choose $\mathcal{P}_t$ of the form:
\begin{equation}
\label{eq:Pt}
\mathcal{P}_t=\diag\left(h^2 \Id_{\g{n}_0}, f^2_1 \Id_{\g{n}_1}, \cdots,f^2_r \Id_{\g{n}_r} \right)
\end{equation}
where $h, f_i\colon(0,T)\rightarrow \R$ are certain smooth positive functions with $i\in\{1,\ldots,r\}$.
Thus, by~\eqref{eq:shape}, the shape operator is diagonal and has the expression
\begin{equation}
\label{eq:eigshape}
\mathcal{S}_t=-\diag\left(\frac{h'}{h} \Id_{\g{n}_0}, \frac{f'_1}{f_1} \Id_{\g{n}_1}, \cdots,\frac{f'_r}{f_r}  \Id_{\g{n}_r} \right).
\end{equation}
Let $X, Y, Z, W$ be vector fields tangent to $ \Sigma_t$. Recall the Gauss equation along the hypersurface~$\Sigma_t$ and the Riccati equation along the geodesic $\gamma$, which read 
\begin{align}
	\label{eq:gauss}
	R(X, Y, Z, W) &= R^{t}(X, Y, Z, W) + g_t(\mathcal{S}_t X, Z) g_t(\mathcal{S}_t Y, W) - g_t(\mathcal{S}_t X, W) g_t(\mathcal{S}_t Y, Z),\\[2ex]
	\label{eq:riccati}
	\mathcal{S}_t' &= \mathcal{S}_t^2 + R_{\dot\gamma},
\end{align}
where $R$ is the curvature tensor of $(M,g)$,  $R_{\dot\gamma}$ is the Jacobi operator along $\gamma$, i.e. $R_{\dot\gamma}(X):= R(X, \dot\gamma) \dot\gamma$,  and $R^t$ denotes the curvature tensor of $\Sigma_t$. Notice that this implies
\begin{equation}
\label{eq:jacobit}
R_{\dot\gamma}=-\diag\left(\frac{h''}{h} \Id_{\g{n}_0}, \frac{f''_1}{f_1} \Id_{\g{n}_1}, \cdots,\frac{f''_r}{f_r}  \Id_{\g{n}_r} \right).
\end{equation}
Moreover, combining~\eqref{eq:gauss} and \eqref{eq:riccati}, and denoting by $H:=\tr\mathcal{S}_t$ the mean curvature of $\Sigma_t$, we have
\begin{equation}
\label{eq:rictan}
\Ric(X,X)=\Ric^t(X,X) - H g_t(\mathcal{S}_t X,X) + g_t(\mathcal{S}'_t X,X), \quad \text{where $X$ is tangent to $\Sigma_t$}.
\end{equation}
Now if $(M,g)$ is Einstein with constant $\lambda$, that is, $\mathrm{Ric}^g = \lambda g$, we have, setting $f_0:=h$,
for each $i \in \{0, 1, \ldots, r\}$,
\begin{equation}
	\label{eq:logform}
	(\log f_i)'' - H (\log f_i)' - \mathrm{ric}_i(g_t) =- \lambda,
\end{equation}
where $\ric_i(g_t):=\Ric_{g_t}(\hat X^*_i,\hat X^*_i)$ with $\hat X_i\in\g{n}_i$ a $g_t$-unit vector.
\medskip

Now we turn our attention to the problem of determining the smoothness conditions of the metric in \eqref{eq:ansatzcohomo1metric}. This problem has been solved by Verdiani and Ziller in~\cite{VZ22}.

A first observation is that, since this is a local problem, it suffices to study the smoothness conditions for cohomogeneity-one actions with one singular orbit. If our action has two singular orbits, we only need to repeat this process on the other singular orbit. Thus, our input data are compact Lie groups $\s{H}<\s{K}<\s{G}$ with $\s{K}/\s{H}\cong\s{S}^\ell$ and $\ell>0$. The action of $\s{K}$ on $\s{S}^\ell$ extends to a linear action on $\R^{\ell+1}$. Moreover, the slice theorem implies the existence of a slice for the $\s G$-action at a point $p_0\in M$ of the form $\exp_{p_0} B_r(0)$, where  $V=\nu_{p_0}(\s G\cdot p_0)$ denotes the normal space to the $\s G$-orbit at $p_0\in M$ and $B_r(0)\subset V$ is a small ball centered at $0$ in $V$. Thus, for our purposes, we may regard our cohomogeneity-one manifold $M$ to be equal to the homogeneous vector bundle $\s{G}\times_{\s{K}} V$, with fiber $V\cong \R^{\ell+1}$ over the singular orbit $\s{G}/\s{K}$. Let us consider a unit speed geodesic $\gamma\colon[0,\infty)\rightarrow M$ with $\gamma(0)=p_0$ intersecting all $\s{G}$-orbits orthogonally, and write the $\s{G}$-invariant metric on $M$ as $g=dt^2 + g_t$ with $t\in (0,\infty)$,
where $g_t$ is a $\s{G}$-invariant metric on the homogeneous space $\s{G}/\s{H}$.
A first relevant result in this discussion is the following, see~\cite{VZ22} for a proof.
\begin{theorem}[{\cite[Theorem~A]{VZ22}}]
	\label{th:vzsmoothness}
	Let $g=dt^2 + g_t$ with $t\in (0,\infty)$ be a Riemannian metric on $\s{G}\times_{\s{K}} (V\setminus\{0\})$,
	where $g_t$ is a $\s{G}$-invariant metric on the homogeneous space $\s{G}/\s{H}$. Then $g$ admits a smooth extension to the singular orbit $\s{G}/\s{K}$ if and only if it is smooth when restricted to the exponential image in $M$ of $P\cap B_r(0)$ for every $2$-plane $P$ of $V$ containing~$\dot{\gamma}(0)$.
\end{theorem}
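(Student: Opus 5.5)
The statement is Theorem~\ref{th:vzsmoothness} of Verdiani--Ziller, so the plan is to reproduce the structure of their argument. One direction is trivial: if $g$ extends smoothly to $\s{G}\times_{\s{K}} V$, then its restriction to the smooth embedded $2$-dimensional submanifold $\exp(P\cap B_r(0))$ is smooth for every $2$-plane $P$. For the converse, I would reduce to a statement about a smooth $\s{K}$-equivariant family of symmetric bilinear forms on a vector space. Near $p_0$ the manifold is $\s{G}\times_{\s{K}}V$. Write $\g{g}=\g{k}\oplus\g{m}$ with $\g{k}=\g{h}\oplus\g{p}$ (all $\mathcal{Q}$-orthogonal). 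A $\s{G}$-invariant metric is then determined by the $\s{K}$-equivariant map
\[
v\in V\longmapsto g_v \in \mathrm{Sym}^2\bigl((V\oplus\g{m})^*\bigr),
\]
where $g_v$ is the metric at $[e,v]$, identified via Killing fields of $\g{m}$ and the fibre directions $V$. Smoothness of $g$ is equivalent to smoothness of $v\mapsto g_v$ near $v=0$. The map is smooth away from $0$ by hypothesis, and it is $\s{K}$-equivariant.

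\textbf{Step 1 (radial reduction).} Since $\s{K}$ acts transitively on the unit sphere of $V$, the map is determined by its values along the ray $t\dot\gamma(0)$. Each coefficient function is therefore a $\s{K}$-equivariant map determined by a function of $t$. I would decompose $\mathrm{Sym}^2((V\oplus\g{m})^*)$ into $\s{K}$-irreducibles $W_j$. An equivariant map $V\to W_j$ is given on the ray by a vector $w_j(t)\in W_j^{\s{H}}$. The classical fact I plan to invoke is the following: the smooth $\s{K}$-equivariant maps $V\to W$ form a module over the smooth invariant functions $\phi(|v|^2)$, finitely generated by polynomial equivariants, by Schwarz's theorem for compact group actions. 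Smoothness of the map is then equivalent to $w(t)$ lying, as a function of $t$, in the span of the generators' restrictions to the ray, with even smooth coefficients in $t$.

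\textbf{Step 2 (reduction to $2$-planes).} The key point is that the restriction of a polynomial $\s{K}$-equivariant to the ray, together with its parity and vanishing-order behaviour, is already detected on a $2$-plane $P\ni\dot\gamma(0)$. The idea is that $P$ contains the rotation by elements of $\s{K}$ (or of a circle in $\s{K}$) moving $\dot\gamma(0)$ within $P$. Pulling $g$ back to $P$ gives an $\s{S}^1$-equivariant smooth object in polar coordinates. For circle actions on $\R^2$, the smoothness of a homogeneous-weight component $e^{ik\theta}a(t)$ is precisely that $a(t)=t^{|k|}\phi(t^2)$. One then checks that these conditions, ranging over all $2$-planes (equivalently, all weights occurring in the restriction of $W_j$ to the relevant circles), are equivalent to the generator-module conditions of Step 1.

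\textbf{Main obstacle.} I expect the hardest part to be Step~2, the matching. One must show that the polynomial generators of the equivariants are controlled by circle subgroups whose orbits lie in $2$-planes through $\dot\gamma(0)$. This means showing that the $\s{H}$-fixed vectors of each $W_j$, together with their weight decomposition under a maximal torus of $\s{K}$ through the ray, already determine the Taylor constraints. I would first try to handle it by a Taylor-expansion argument. Formally expand $g_v$ at $0$: the $2$-plane conditions give smooth even/odd expansions along each plane. A smooth function on $V$ is reconstructed from its restrictions to all $2$-planes through a fixed direction, using $\s{K}$-equivariance to rotate any plane to one containing $\dot\gamma(0)$. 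A Borel-lemma plus flat-remainder argument then yields smoothness on all of $V$.
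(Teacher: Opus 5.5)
The paper does not prove this statement; it quotes Verdiani--Ziller, so your proposal has to stand on its own. Several parts are fine: the easy direction, the reduction to a $\s K$-equivariant map $F\colon V\to W:=\mathrm{Sym}^2((V\oplus\g m)^*)$, $v\mapsto g_v$, and the module description in Step~1.

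The gap is Step~2. It is the entire content of the converse, and you leave it unproved.

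Route~(a) assumes that every $2$-plane $P\ni\dot\gamma(0)$ is swept out by a circle subgroup of $\s K$. This is neither given nor needed. You then say that ``one checks'' the circle-weight conditions are equivalent to the generator conditions of Step~1, but that equivalence is the theorem itself.

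Route~(b) claims that smoothness on all $2$-planes through $0$, plus a Borel-lemma/flat-remainder argument, gives smoothness on $V$. Rotating by $\s K$ gives you exactly smoothness on all planes through $0$ and nothing more. Without further input the claim is false. Sum bumps of height and radius $e^{-j}$ centred at $c(1/j)$, where $c(t)=(t,t^2,t^3)$. The result is smooth on $\R^3\setminus 0$ and vanishes near $0$ on every plane through $0$: each such plane stays at distance $\gtrsim t^3$ from $c(t)$, so it meets only finitely many bumps. Yet the function is not $C^1$ at $0$.

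So a remainder that is flat along every plane need not be smooth, and your sketch never says where the missing uniformity comes from. Your remark about even/odd expansions along planes also points at the wrong information. Planes matter only because they force the Taylor terms to be polynomial.

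Route~(b) can be completed by using equivariance twice.

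\emph{Taylor terms.} Since $F$ is smooth on every plane through $0$, the terms $p_j(v):=\frac{1}{j!}\partial_s^j F(sv)|_{s=0}$ are well defined. They are $\s K$-equivariant, homogeneous of degree $j$, and polynomial on each plane. This forces $p_j$ to be a polynomial on $V$: in an affine chart $\{x_N=1\}$ it is a polynomial of degree $\le j$ along every line, hence a polynomial; homogenize, and use planes again to cover $\{x_N=0\}$. This is the only place where planes, rather than the single ray, are needed.

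\emph{Smooth part.} Borel's lemma together with averaging over $\s K$ gives a smooth equivariant $\tilde G$ with Taylor series $\sum_j p_j$.

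\emph{Flat remainder.} The remainder $R:=F-\tilde G$ is equivariant, so it is determined by $r(t):=R(t\dot\gamma(0))\in W^{\s H}$. This $r$ is smooth and flat at $t=0$. Writing $R(t\,k\dot\gamma(0))=k\cdot r(t)$ in polar coordinates on $(0,\infty)\times \s K/\s H$ gives $|D^\alpha R(v)|\le C_\alpha\sum_{i+m\le|\alpha|}|v|^{-m}\,|r^{(i)}(|v|)|\to 0$. Hence $R$ extends smoothly and flatly to $0$. This uniform control through a single ray is exactly what plane-wise smoothness cannot supply.

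Then $F=\tilde G+R$ is smooth, and the argument needs neither the generator module of Step~1 nor any circle subgroups.
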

In view of this, Verdiani and Ziller also showed that, using the classification of transitive actions on spheres, it is possible to require these conditions only on a finite set of $2$-planes containing the $\dot{\gamma}(0)$-direction. In our case (see discussion in~\S\ref{subsec:smoothnesscpn}), the situation further simplifies because $\s{H}$ acts irreducibly on the orthogonal complement of $\dot{\gamma}(0)$ in $V$, and thus, we only need to require these conditions in a single $2$-plane.

Let us consider  the homogeneous spherical bundle induced by the triple $\s H< \s K< \s G$ and the following subspaces of the Lie algebra $\g{g}$:
\[ \s{K}/\s{H}\rightarrow \s{G}/\s{H}\rightarrow \s{G}/\s{K}, \quad T_{e\s{H}}\s{K}/\s{H}\simeq \g{p}, \enspace  T_{e\s{K}}\s{G}/\s{K}\simeq \g{m}, \enspace T_{e\s{H}}\s{G}/\s{H}\simeq \g{n}= \g{p}\oplus\g{m},\]
where we use the standard identifications and $\g{n}$ is an $\Ad(\s{H})$-invariant complement of $\g{h}$ in $\g{g}$,  $\g{p}$ is an $\Ad(\s H)$-invariant complement of $\g{h}$ in $\g{k}$, and  $\g{m}$ is an $\Ad(\s{K})$-invariant complement of $\g{k}$ in $\g{g}$.  Set an $\Ad(\s{H})$-invariant splitting of $\g{n}$, namely $\g{n}=\g{n}_0\oplus \g{n}_1\oplus \cdots \oplus \g{n}_r$ as in~\eqref{eq:ndescomposition}.
Geometrically, $\g{p}$ can be identified with a subspace of $V$ and $\g{m}$ as the tangent space of the singular orbit $\s G/\s K$ at~$p_0$.

 Fix a vector $X\in\g{p}$ normalized in such a way that the map $\theta\in[0,2\pi)\mapsto \Exp(\theta X)$ is injective. Define $\s{L}:=\{\Exp(\theta X): \theta\in[0,2\pi]\}\cong\s S^1$ and for the choice of this fixed $X\in\g{p}$ set $a$ as the positive integer given by
\begin{equation}
	\label{eq:adef}
	a:=\frac{2\pi}{\theta_0}, \quad \text{where $\theta_0>0$ is the smallest number such that $\Exp(\theta_0 X)\in\s{H}$}, 
\end{equation}
or equivalently $a=|\s L\cap \s H|$. Notice that in our situation (see~\eqref{eq:splittingtuberpn} and \eqref{eq:splittingtubequadric}), $\g{p}$ is an $\s H$-irreducible module.  Hence, the definition of $a$ does not depend on the choice of elements $X\in\g{p}$ subject to the normalization above. Now observe that the group $\s{L}$ induces a decomposition into $\s{L}$-submodules of the subspace $\g{m}$, that is,
\begin{equation*}
	\label{eq:Ldec}
 \g{m}=\ell_0\oplus \ell_1\oplus \cdots \oplus\ell_s,
\end{equation*}
where $\s{L}_{\rvert \ell_0}=\Id$, $\s{L}_{\rvert \ell_k}=R(d_k\theta)$ for each $k\ge1$, with $R(\alpha\theta)$ denoting the standard anticlockwise rotation in $\R^2$ of weight $\alpha\in \mathbb Z_{>0}$, and $d_k$ positive integers. In order to verify that $\s{L}$ acts on a $2$-dimensional module with weight $\alpha\in\mathbb{Z}$, it suffices to find two linearly independent vectors $Y_1, Y_2$ in the $\s{L}$-module such that $[X,Y_1]=\alpha Y_2$ and  $[X,Y_2]=-\alpha Y_1$.

Now by Theorem~\ref{th:vzsmoothness}, it suffices to derive smoothness conditions for the metric $g$  restricted to the image under  $\exp_{p_0}$ of a small ball in any $2$-plane $\spann\{ X,\dot{\gamma}(0)\}$, with $X\in\g{p}\subset V$. Thus,   for our purposes, it suffices to derive smoothness conditions for inner products of the form $\langle Z, Z\rangle$ where $Z$ lies in $\g{p}$ or $\g{m}$ since $\g{m}$ is orthogonal to $\g{p}$ in the diagonal metrics we consider, see \S \ref{subsec:smoothnesscpn} and \eqref{eq:splittingtuberpn}, \eqref{eq:splittingtubequadric}. Thus, we will not discuss the mixed products between $\g{m}$ and $\g{p}$ although they can be found in~\cite[Table~C]{VZ22}. The smoothness conditions for $\langle Z_1, Z_2\rangle$ where $Z_1, Z_2\in\g{p}$ or $Z_1, Z_2\in\g{m}$  were derived by Verdiani and Ziller in~\cite{VZ22} and are reproduced below for the reader's convenience.

For $X\in\g{p}$, we have $g_{\gamma(t)}(X^*,X^*)=a^2 t^2 + t^4 \phi_0(t^2)$,
where $\phi_0$ is some smooth function and $a$ is defined as in~\eqref{eq:adef}. 
Let $\{Y_1,Y_2\}$ be a basis of $\ell_{i}$. We define the coefficients $\widetilde{g}_{km}:=g_{\gamma(t)}(Y^*_k, Y^*_m)$, where $k,m\in\{1,2\}$. 
Then $\widetilde g_{11}+\widetilde g_{22}=\phi_1(t^2)$, $\widetilde g_{11}- \widetilde g_{22}=t^{\frac{2d_i}{a}}\phi_2(t^2)$, and $\widetilde g_{12}=t^{\frac{2 d_i}{a}}\phi_3(t^2)$, for some smooth functions $\phi_1, \phi_2, \phi_3$. Now we study the inner products between two different $\s L$-submodules $\ell_i$ and $\ell_j$ of $\g{m}$, with $i\neq j$ and $i,j\ge 1$. Let $\{Y_1,Y_2\}$ and  $\{Z_1,Z_2\}$ be bases of $\ell_i$ and $\ell_j$ respectively, as above. Define the coefficients $\widehat{g}_{km}:=g_{\gamma(t)}(Y^*_k, Z^*_m)$ with $k,m\in\{1,2\}$. Then the smoothness conditions are:
\begin{align} \begin{split} \label{eq:pre_smooth}
	\widehat{g}_{11}+\widehat{g}_{22} &= t^{\frac{|d_i-d_j|}{a}}\psi_1(t^2), \qquad
	\widehat{g}_{11}-\widehat{g}_{22} = t^{\frac{d_i+d_j}{a}}\psi_2(t^2), \\
	\widehat{g}_{12}-\widehat{g}_{21} &= t^{\frac{|d_i-d_j|}{a}}\psi_3(t^2), \qquad
	\widehat{g}_{12}+\widehat{g}_{21} = t^{\frac{d_i+d_j}{a}}\psi_4(t^2),
  \end{split}  \end{align}
for some smooth functions $\psi_1,\psi_2,\psi_3,\psi_4$. Finally, for the inner products between $\ell_0$ and $\ell_i$ with $i\ge 1$, if $Y\in\ell_0$ and $\{Z_1,Z_2\}$ is a basis of $\ell_i$ as above, then
\[
g_{\gamma(t)}(Y^*,Z^*_k) = t^{\frac{d_i}{a}}\eta_k(t^2), \quad k\in\{1,2\},
\]
for some smooth functions $\eta_1,\eta_2$, while $g_{\gamma(t)}(Y^*,Y^*)$ is an even function of $t$.

\subsection{CAP and interval arithmetics}
\label{subsec:cap}

We will use analytical tools to reduce the proof of existence of our Einstein metrics to the existence of a zero of a certain matching function $\mathcal{G}\colon U \subset \mathbb R^4 \to \mathbb R^4$, and a computer-assisted proof (CAP) to prove the existence of a zero of $\mathcal G$. Loosely speaking, the smoothness conditions \eqref{eq:pre_smooth} at each singular orbit (either $t = 0$ or $t = T$) each generate a two-parameter family of possible solutions, locally defined in a neighbourhood of $t = 0$ or $t = T$, respectively. Hence, the domain of $\mathcal{G}$ should be understood as an open subset of $\mathbb R^4 = \mathbb R^2 \times \mathbb R^2$, the first $\mathbb R^2$ determining a local solution around $t = 0$ and the second $\mathbb R^2$ determining a smooth solution around $t = T$. The open set $U$ is taken so that both local solutions can be continued until a certain smooth orbit (at $t = t_0$, corresponding to the smooth orbit of zero mean curvature), and then the existence of a global smooth Einstein metric corresponds to the question of whether those local solutions can be smoothly glued at $t = t_0$. We will see that this is possible if four matching conditions are satisfied and those are precisely encoded in the codomain of $\mathcal{G}$, in such a way that $\mathcal{G} = 0$ determines local solutions at $t = 0$ and $t = T$ that can be glued smoothly at $t = t_0$ (see Section \ref{sec:krawczyk} for a precise definition of $\mathcal{G}$). 

 The type of computer-assistance that we will employ is interval arithmetic, which goes back to the pioneering work of Moore \cite{Moore}. We refer to the survey of G\'omez-Serrano \cite{GS19} for a more recent treatment of the subject, with explicit applications. The main idea is to replace numbers by intervals and operate in such a way that we have rigorous guarantee that the number to be computed is in the corresponding interval. Let $x^I = [x^L, x^R]$ and $y^I = [y^L, y^R]$ be intervals such that $x\in x^I$ and $y \in y^I$. The set theoretic definition of an operation $\ast$ acting on those is $x^I \ast y^I = \{ x \ast y \; : \: x \in x^I, \; y \in y^I \}$. In contrast, when working with computer-assisted proofs we simply compute $x^I \ast y^I$ ensuring $x^I \ast y^I \supset \{ x \ast y \; : \: x \in x^I, \; y \in y^I \}$. On the one hand, this relaxation is still enough to rigorously establish the desired result: $x \ast y \in x^I \ast y^I$. On the other hand, this definition is relaxed enough that we can represent $x^I \ast y^I$ with the computer. For example, if $x^I = [x^L, x^R]$ and $y^I = [y^L, y^R]$, the sum is computed as
\begin{equation} \label{eq:CAP_sum}
x^I + y^I = [(x^L + y^L)_-, (x^R +y^R)_+]
\end{equation}
where $z_-$ (resp.\ $ z_+$) denotes the downward (resp.\ upward) truncation of $z$. That is, when $z$ is not representable by the computer, we truncate it to the lower nearest (resp.\ upper nearest) representable number. The same can be done with other elementary operations. More complicated functions such as $\sin, \cos, \exp, \ldots$ can be computed, for example, by using the Taylor formula up to a very high order (which is just elementary operations) and absorbing the remainder via an explicit bound on the Taylor series. 

Similarly, if we have a function $F\colon \mathbb R^n \to \mathbb R^m$, we can consider its interval-arithmetic implementation $F^I\colon \mathcal I (\mathbb R)^n \to \mathcal I ( \mathbb R )^m$ to be a function sending real boxes $x^I_1 \times x^I_2 \times \cdots \times x^I_n$  to real boxes $y^I_1 \times y^I_2 \times \ldots \times y^I_m$ in such a way that if $x_i \in x^I_i = [x^L_i, x^R_i]$ for every $i = 1, 2, \ldots, n$, then it must hold that $y = F(x) \in y_1^I \times y_2^I \times \cdots \times y_m^I$. Notice that the same $F\colon \mathbb R^n \to \mathbb R^m$ admits infinitely many valid implementations. One practical example of this is the following: suppose that $F(x) = \sum_{n = 0}^\infty c_n x^n$ with $| c_n | \leq C_0 C_{\mathrm{rad}}^n$, with $F\colon D = (0, 1/C_{\mathrm{rad}}) \to \mathbb R$. Here $1/C_{\mathrm{rad}}$ bounds the radius of convergence of the series. 

Then, for any $x^I = [x^L, x^R] \subset D$ we split $F$ into the sum of its first $m$ terms plus an enclosure of the remainder, the latter bounded by a geometric series. Thus we can implement:
\begin{equation} \label{eq:CAP_taylor}
F(x^I) = \sum_{n = 0}^{m-1} c_n \cdot (x^I)^n + \left[- \frac{C_0 C_{\mathrm{rad}}^m (x^R)^m}{1 - x^R C_{\mathrm{rad}}} , \frac{C_0 C_{\mathrm{rad}}^m (x^R)^m}{1 - x^R C_{\mathrm{rad}}} \right]
\end{equation}
which just involves the elementary operations $+$ and $\cdot$. In practice, we can also perform truncations as in \eqref{eq:CAP_sum} that preserve rigor, for instance, computing an upper truncation of $\frac{C_0 C_{\mathrm{rad}}^m (x^R)^m}{1 - x^R C_{\mathrm{rad}}}$. That is done by simply enclosing $C_0$, $C_{\mathrm{rad}}$ and $1$ by intervals $C_0^I$, $C_{\mathrm{rad}}^I$ and $1^I$, using interval arithmetic to compute $E^I = \frac{C_0^I (C_{\mathrm{rad}}^I)^m (x^I)^m}{1^I - x^I C_{\mathrm{rad}}^I}$.

An estimate of the type \eqref{eq:CAP_taylor} will be crucial for our interval-arithmetic implementation of $\mathcal{G}$. Concretely, in Section \ref{sec:Hexpansions} we will derive series expansions of the local solutions close to $t = 0$ and $t = T$ respectively (see Proposition~\ref{prop:RPseries} and Proposition~\ref{prop:QCseries}), with explicit bounds on the coefficients of the series. Since those series expansions are not enough to arrive at $t = t_0$, they will be combined with an ODE solver from the CAPD library \cite{KMWZ21}. This tool allows us to rigorously enclose the solution of a non-singular ODE. This ODE will correspond to the Einstein equation in the cohomogeneity-one framework. While the ODE becomes singular at the endpoints $t \in \{ 0, T \}$, we treat local neighbourhoods of $\{ 0, T \}$ with the series approach described above, so that the ODE solver is used on a compact interval of $(0, T)$ where the ODE is non-singular. Finally, that allows us to obtain an interval-arithmetic implementation of $\mathcal{G}$, which for every $x^I = x_1^I \times x_2^I \times x_3^I \times x_4^I$ returns a box $y_1^I \times y_2^I \times y_3^I \times y_4^I$ in such a way that we have the rigorous guarantee that $\mathcal{G}(x) \in y^I$ for all $x \in x^I$. That will allow us to prove the existence of a zero of $\mathcal G$ via Krawczyk's fixed point theorem (which we recall in Theorem \ref{th:krawczyk}).

In the context of finding solutions to ODEs, computer assistance has proven very powerful in recent years. In the context of Einstein metrics, we refer to the works of Buttsworth--Hodgkinson \cite{BH} and Wang \cite{W26}, where the solution is found by formalizing a numerical guess via a Schauder fixed point theorem. Our approach is different, since we directly show zeros of a finite-dimensional matching function rather than dealing with the infinite dimensional problem. While we are not aware of this approach being employed in geometric problems, we point to the work of Dahne and Figueras \cite{DF24}, where they applied a similar technique to solve a matching problem for an ODE determining the existence of self-similar solutions for the focusing nonlinear Schr\"odinger equation (and the closely related Ginzburg--Landau equations). The combination of series expansions with computer-assisted approaches to bound ODEs is also inspired by \cite{BCG25, CGSS26_1,CGSS26_2}.

\section{The cohomogeneity-one $\s{SO}_{n+1}$-action on $\C\s{P}^n$}
\label{sec:so_n+1-action}

In this section we study $\s{SO}_{n+1}$-invariant metrics on $\C\s P^n$. To this end we first explain the geometry underlying this cohomogeneity-one action. Then, we study cohomogeneity-one invariant metrics and determine smoothness conditions using the theory of \cite{VZ22}. Finally, we compute the Ricci tensor to get the Einstein equations.

Let us consider $\C^{n+1}$ and denote by $\{e_1,\ldots,e_{n+1}\}$ its canonical basis. Define an $\s{SO}_{n+1}$-invariant inner product given by $\langle z,w\rangle:=\sum_{i=1}^{n+1}z_iw_i$, for $z,w\in\C^{n+1}$, that is, the complex-bilinear extension of the Euclidean inner product of $\R^{n+1}$. The complex projective space is $\C\s P^n=\s{S}^{2n+1}/\s S^1$, where $\s{S}^{2n+1}:=\{z\in\C^{n+1}: \langle z,\bar z\rangle=1\}$. Let us equip $\C\s P^n$ with $g_{\mathrm{FS}}$, that is, the Fubini-Study metric scaled such that its minimum sectional curvature equals $1$, or equivalently, its diameter is equal to~$\pi/2$.

From now on we assume $n\ge 2$. Let us consider the standard action $\s{SO}_{n+1}\curvearrowright\C^{n+1}$. This action descends to a cohomogeneity-one action on $\C\s P^n$, see~\cite{Ta73} for the classification of isometric cohomogeneity-one actions on $\C\s P^n$ with respect to $g_{\mathrm{FS}}$. In what follows, we will describe the orbit structure of the action. To this end, let us introduce the $\s{SO}_{n+1}$-invariant function
\[
\chi\colon\C\s P^n\to[0,1], \qquad \chi([z])=|\langle z,z\rangle|,\quad\text{for }z\in\s{S}^{2n+1},
\]
which is well defined as $|\langle e^{i\theta}z,e^{i\theta}z\rangle|=|e^{2i\theta}||\langle z,z\rangle|=|\langle z,z\rangle|$ for every $\theta\in[0,2\pi)$. Let $p=[e_1]\in\R\s P^n$ and consider
\begin{equation*}
	\gamma(r)=[\cos(r)e_1+\sin(r)ie_2],\quad r\in[0,\pi/4].
\end{equation*}
The lift $\widetilde\gamma(r)=\cos(r)e_1+\sin(r)ie_2$ to the unit sphere $\s S^{2n+1}\subset\C^{n+1}$ is a unit-speed great circle and is horizontal with respect to the Hopf fibration $\s S^{2n+1}\to\C\s P^n$. Since this is a Riemannian submersion, its projection $\gamma$ is a unit-speed geodesic of $\C\s P^n$. A direct computation gives $\chi(\gamma(r))=|\cos(2r)|$, and thus $\chi$ is surjective. Indeed, let $[z]\in\C\s P^n$ with $z=u+iv\in\s S^{2n+1}$, with $u,v\in\R^{n+1}$. Multiplying $z$ by a unit complex number, we can assume without loss of generality that $u$ and $v$ are orthogonal with $|u|\ge|v|$. Let us write $|u|=\cos r$, $|v|=\sin r$ for $r\in[0,\pi/4]$. If $v\ne 0$, choose $A\in\s{SO}_{n+1}$ with $Ae_1=u/|u|$, $Ae_2=v/|v|$; then $A\widetilde\gamma(r)=z$, so $[z]$ lies on the orbit of $\gamma(r)$. If $v=0$, then $[z]=[u]\in\R\s P^n$ is on the orbit of $\gamma(0)$.   Moreover, $\chi\circ\gamma\colon[0,\pi/4]\longrightarrow[0,1]$ is injective, and thus $\chi$ parametrizes the $\s{SO}_{n+1}$-orbits yielding a homeomorphism between the orbit space $\C\s P^n/\s{SO}_{n+1}$ and $[0,1]$.

\begin{figure}[ht]
	\centering
\begin{tikzpicture}[
	>={Stealth[length=2.5mm]},
	geo/.style={thick},                
	orbit/.style={thin},               
	hidden/.style={thin, dashed},      
	perp/.style={line width=0.4pt}    
	]
	
	\coordinate (N) at (0, 3);
	\coordinate (S) at (0,-3);
	
	\draw[thick] (N) to[bend left=45]  (S);
	\draw[thick] (N) to[bend right=45] (S);
	
	\foreach \z/\r in {2.0/0.74, 1.0/1.12, 0/1.24, -1.0/1.12, -2.0/0.74} {
		\draw[hidden] (-\r, \z) arc [start angle=180, end angle=0,   x radius=\r, y radius=0.18];
		\draw[orbit]  (-\r, \z) arc [start angle=180, end angle=360, x radius=\r, y radius=0.18];
	}
	
	\draw[geo] (N) -- (S);
	
--- Orthogonality sqare---
	\draw[perp] (0, 1.94) -- (0.14, 1.94) -- (0.14, 1.82);
	\draw[perp] (0, 0.94) -- (0.14, 0.94) -- (0.14, 0.82);
	\draw[perp] (0, -0.07) -- (0.14, -0.07) -- (0.14, -0.17);
	\draw[perp] (0, -1.07) -- (0.14, -1.07) -- (0.14, -1.17);
	\draw[perp] (0, -2.07) -- (0.14, -2.07) -- (0.14, -2.17);
	
	\fill (N) circle (1.4pt);
	\fill (S) circle (1.4pt);
	
	\node[above=3pt] at (N) {$\s G/\s K_+ \cong \s Q_{n-1}$};
	\node[below=3pt] at (S) {$\s G/\s K_- \cong \R\s P^n$};
	\node[left]      at (-1.5, 0)   {$\s G/\s H \cong T_1\R\s P^n$};
	\node[right]     at (0.15, 0.5) {$\gamma$};
	
	\draw[->, thick] (2.7, 0.3) to[bend left=15] (4.7, 0.3);
	\node at (3.7, 0.95) {$\chi$};
	
	\coordinate (T) at (6, 3);
	\coordinate (B) at (6,-3);
	\draw[geo] (T) -- (B);
	\fill (T) circle (1.4pt);
	\fill (B) circle (1.4pt);
	\node[right=2pt] at (T) {\small $0$};
	\node[right=2pt] at (B) {\small $1$};
	\node[below=3pt] at (B) {$\C\s P^n / \s{SO}_{n+1} \cong [0,1]$};
\end{tikzpicture}
\caption{The orbit structure of the cohomogeneity-one action of $\s{SO}_{n+1}$ on $\C\s P^n$.}
\label{fig:c1_action}
\end{figure}

Now we will state the isotropies for the $\s{SO}_{n+1}$-action. Observe that it suffices to compute the isotropy of the points lying on the geodesic $\gamma$. It turns out that the isotropy at $\gamma(0)$, $\gamma(\pi/4)$ and  $\gamma(r)$ for $r\in(0,\pi/4)$ is equal to $\s{K}_-=\s{S}(\s O_1\times \s O_n)\cong \s O_n$,  $\s{K}_{+}=\s{SO}_2\times \s{SO}_{n-1}$, and $\s{H}=\mathbb{Z}_2\times\s{SO}_{n-1}$, respectively. Thus, the singular orbits are a real projective space $\R\s{P}^n=\s{G}/\s{K}_{-}$, which is given by those $[z]\in\C\s P^n$ such that $\chi([z])=1$; and the complex quadric $\s Q_{n-1}=\s{G}/\s{K}_{+}$, which is given by those $[z]\in\C\s P^n$ such that $\chi([z])=0$ and is diffeomorphic to the oriented Grassmannian of two planes in~$\R^{n+1}$.  Consequently, we have the following cohomogeneity-one diagram:
\[
\begin{tikzcd}[column sep=3em]
	& \s G = \s {SO}_{n+1} & \\
	\s K_- = \s O_n \arrow[ur,hook] & &
	\s K_+ = \s{SO}_2\times \s{SO}_{n-1} \arrow[ul,hook'] \\
	& \s H = \mathbb{Z}_2\times \s{SO}_{n-1}  \arrow[ul,hook] \arrow[ur,hook'] &
\end{tikzcd}
\]
\[
\s  K_-/\s H \cong \s{SO}_n/ \s{SO}_{n-1} = \s{S}^{n-1},\quad \ell_- = n-1;\qquad
\s  K_+/\s H \cong \s{S}^1,\quad\ell_+ = 1.
\]
Furthermore, we claim that the geodesic $\gamma$ meets every orbit orthogonally. The conjugation $c\colon[z]\mapsto[\bar z]$ is an isometry of $(\C\s P^n,g_{\mathrm{FS}})$ with fixed-point set the totally geodesic $\R\s P^n=\s{SO}_{n+1}\cdot[e_1]=\s{SO}_{n+1}\cdot \gamma(0)=\chi^{-1}(1)$. The involution $dc_p$ splits $T_p\C\s P^n=T_p\R\s P^n\oplus\nu_p\R\s P^n$ into its $\pm1$-eigenspaces. Since $c(\gamma(r))=\gamma(-r)$, we get $dc_p(\dot\gamma(0))=-\dot\gamma(0)$, hence $\dot\gamma(0)\perp T_p\R\s P^n$. As $\s{SO}_{n+1}$ acts by isometries, $g_{\mathrm{FS}}(\dot\gamma,X^*)$ is constant along $\gamma$ for every Killing field $X^*$ induced by $X\in\mathfrak{so}_{n+1}$, and thus $\gamma$ intersects orthogonally all $\s{SO}_{n+1}$-orbits. Also, the principal orbits $\s G/\s H$ can be regarded as tubes around the totally geodesic $\R\s P^n$ and  $\gamma$ realizes the distance from $\R\s P^n$ to each orbit it intersects, that is,  $d_{\mathrm{FS}}(\gamma(r),\R\s P^n)=r$ for every $r\in[0,\pi/4]$.

\subsection{$\s{SO}_{n+1}$-invariant metrics on $\C\s{P}^n$}
Let us start by studying the moduli space of $\s{G}$-invariant metrics on the principal orbit $\s{G}/\s{H}=\s{SO}_{n+1}/(\mathbb{Z}_2\times\s{SO}_{n-1})$.  Fix a background metric $\mathcal{Q}$ given by the $\Ad(\s{G})$-invariant inner product on the Lie algebra $\g{g}= \g{so}_{n+1}$ defined via $\mathcal{Q}(X,Y)=-\tfrac{1}{2}\tr(X\cdot Y)$, for $X,Y\in\g{g}$. A reductive decomposition of $\s{G}/\s{H}$ is given by $\g{g}=\g{h}\oplus\g{n}$, where
\[
\g{h}=\left\{\begin{pmatrix} 0 & 0 \\ 0 & A \end{pmatrix} : A\in\g{so}_{n-1}\right\},\qquad
\g{n}=\left\{\begin{pmatrix} aJ & B \\ -B^\top & 0 \end{pmatrix} : a\in\mathbb{R},\ B\in M_{ 2\times(n-1)}(\mathbb{R})\right\},
\]
where $J=\left(\begin{smallmatrix} 0 & 1 \\ -1 & 0\end{smallmatrix}\right)$, and $\g{n}=\g{h}^{\perp}$ is the orthogonal complement of $\g{h}$ with respect to $\mathcal{Q}$. Moreover, the isotropy representation has the following $\s{H}$-submodules:
\begin{equation}
\label{eq:nsplitting}
\mathfrak{n}_0:=\spann\{E_{1,2}\}, \quad \mathfrak{n}_1:=\spann\{E_{1,3}, E_{1,4}, \ldots, E_{1,n+1}\},\quad \mathfrak{n}_2:=\spann\{E_{2,3}, E_{2,4}, \ldots, E_{2,n+1}\},
\end{equation}
where $E_{i,j}$, with $i<j$ denotes the matrix with a $1$ in the position $(i,j)$ and a $-1$ in the position~$(j,i)$. Hence $\g{n}=\g{n}_0\oplus\g{n}_1\oplus\g{n}_2$, with $\g{n}_0$ trivial and $\g{n}_1\cong\g{n}_2$ the standard representation of $\s{SO}_{n-1}$ twisted by the sign action of $\mathbb{Z}_2=\diag(\pm 1, \pm 1, 1,\cdots,1)$ on each factor $\g{n}_i$ with $i\in\{1,2\}$. Although $\g{n}_1$ and $\g{n}_2$ are equivalent, by using the action of the normalizer $\s N_{\s G}(\s H)$, every $\s{G}$-invariant metric~(see~\cite[p.~120]{Ke98}) is induced by an $\Ad(\s H)$-invariant inner product on $\g{n}$ of the form
\begin{equation*}
	\label{eq:metricstiefel}
	\langle X, Y\rangle_{\lambda_0, \lambda_1, \lambda_2}= \lambda_0 \mathcal{Q}(X_{\mathfrak{n}_0},Y_{\mathfrak{n}_0}) + \lambda_1  \mathcal{Q}(X_{\mathfrak{n}_1},Y_{\mathfrak{n}_1}) + \lambda_2 \mathcal{Q}(X_{\mathfrak{n}_2},Y_{\mathfrak{n}_2}),
\end{equation*}
where $\lambda_i>0$ and $(\cdot)_{\g{n}_i}$ denotes the orthogonal projection to $\g{n}_i$ for each $i\in\{0,1,2\}$.
From now on we will only consider  $\s{SO}_{n+1}$-invariant metrics of the form:
\begin{equation}\label{eq:ansatz}
	g(X,Y) = dt^2
	+ h(t)^2 \mathcal{Q}(X_{\g{n}_0},Y_{\g{n}_0}) + f_1(t)^2 \mathcal{Q}(X_{\g{n}_1},Y_{\g{n}_1}) + f_2(t)^2 \mathcal{Q}(X_{\g{n}_2},Y_{\g{n}_2}),
\end{equation}
where $h$, $f_1$ and $f_2$ are positive smooth functions defined on $[0,T]$ and the variable $t$ is the time parameter of a unit normal geodesic $\gamma\colon[0,T]\rightarrow \C\s{P}^n$ such that $\gamma(0)\in \s G/\s K_{-}$, $\gamma(T)\in \s G/\s K_{+}$ and $\gamma(t)$ lies in a principal orbit when $t\in(0,T)$. 
\begin{remark}
\label{rem:n3n7}
	By~\cite[Theorem~5.1]{Da09}, see also~\cite[Example~5.2]{Da09}, every $\s{SO}_{n+1}$-invariant Einstein metric on $\C\s P^n$ with $n\ge 4$ (notice that \cite[Theorem~5.1]{Da09} requires the irreducible isotropy submodules to be of real type) can be globally diagonalized in the splitting $\g{n}=\g{n}_0\oplus\g{n}_1\oplus\g{n}_2$ implying that every $\s{SO}_{n+1}$-invariant Einstein metric on $\C\s P^n$ is of the form \eqref{eq:ansatz} when $n\ge 4$. Moreover, one can check that $\s O_{n+1}$-invariant metrics correspond to the metrics on the ansatz~\eqref{eq:ansatz}. Also it is interesting to point out that the principal orbits, which are diffeomorphic to $T_1 \R\s P^n$, $n\ge2$, are diffeomorphic to the product $\s{S}^{n-1}\times\R\s P^n$ if and only if $n\in\{3,7\}$, which correspond to the lower and upper bound of valid complex dimensions in the existence result in \hyperref[th:main]{Main Theorem}.
\end{remark}

\subsection{The smoothness conditions of diagonal $\s{SO}_{n+1}$-invariant metrics on $\C\s P^n$}
\label{subsec:smoothnesscpn}
Now we derive the smoothness conditions at the singular orbits $\s{G}/\s{K}_-=\R\s P^n$ and $\s G/\s K_{+}=\s Q_{n-1}$. Near $\s{G}/\s{K}_-$ the action is described by the homogeneous vector bundle induced by the triple $\s{H}= \mathbb{Z}_2\times \s{SO}_{n-1} < \s{K}_- = \s{O}_n < \s{G}=\s{SO}_{n+1}$, with $\s{K}_-/\s{H}\cong\s{S}^{n-1}$, so the slice has dimension $\ell_-+1=n$.  Following the notation established in \S\ref{subsec:c1theory}, we write
\[\g{n}=\g{p}\oplus\g{m}, \qquad \g{p}=T_{e\s{H}}\s{K}_-/\s{H},\quad \g{m}=T_{e\s{K}_-}\s{G}/\s{K}_-.\]
In terms of the splitting $\g{n}=\g{n}_0\oplus\g{n}_1\oplus\g{n}_2$ described in~\eqref{eq:nsplitting}, we have
\begin{equation}
\label{eq:splittingtuberpn}
\g{p}=\g{n}_2, \qquad \g{m}=\g{n}_0\oplus\g{n}_1.
\end{equation}
Indeed, $\g{m}$ can be identified with the tangent space to the singular orbit $\R\s P^n$ at $\gamma(0)=p$, while $\g{p}\oplus\R\dot{\gamma}$ can be identified with its normal space. The principal isotropy $\s{H}$ acts on $\g{p}\cong\R^{n-1}$ as the standard $\s O_{n-1}$-representation, which is irreducible. As $\s H$ acts transitively on the unit sphere of $\g{p}$, we can fix $X=E_{2,3}$, and define $\s{L}=\{\Exp(\theta X):\theta\in[0,2\pi]\}$. The smallest $\theta_0>0$ with $\Exp(\theta_0 X)\in\s{H}=\mathbb{Z}_2\times\s{SO}_{n-1} $ is $\theta_0=2\pi$. Therefore $a=\frac{2\pi}{\theta_0}=1$. We decompose $\g{m}$ into irreducible $\s{L}$-submodules. Now we have $\ad_X E_{1,2}=[E_{2,3},E_{1,2}]=-E_{1,3}$ and $\ad_X E_{1,3}=[E_{2,3},E_{1,3}]=E_{1,2}$. Hence $\spann\{E_{1,2},E_{1,3}\}\subset\g{m}$ is an irreducible $\s{L}$-submodule of weight $d=1$. For $j\ge 4$, $\ad_X E_{1,j}=[E_{2,3},E_{1,j}]=0$, so each $E_{1,j}\in\g{n}_1$ spans a trivial $\s L$-submodule, contributing to $\ell_0\subset\g{m}$.

Therefore, $\g{m}=\ell_0\oplus\ell_1$, where $\ell_0=\spann_{\R}\{E_{1,j}\}^{n+1}_{j=4}$ and $\ell_1=\spann_{\R}\{E_{1,2}, E_{1,3}\}$. Since the matrices $E_{i,j}$ are pairwise $\mathcal{Q}$-orthogonal and the metric $g$ in \eqref{eq:ansatz} is diagonal with respect to the splitting $\g{n}=\g{n}_0\oplus\g{n}_1\oplus\g{n}_2$, the inner product $\widetilde{g}_{12}$ within $\ell_1$ and the cross inner products between $\ell_0$ and $\ell_1$ (as in \S\ref{subsec:c1theory}) vanish identically. The corresponding smoothness conditions are therefore automatically satisfied, and it suffices to check the smoothness conditions for inner products of elements within each $\g{n}_i$ for $i\in\{0,1,2\}$. Thus, by  \S\ref{subsec:c1theory}, the metric $g$ extends smoothly to $\s G/\s K_{-}$ if and only if for a unit $Z\in\g{n}$, we have for some smooth functions $\phi, \psi$ and $\eta$:
\begin{align*}
	h(t)^2&=g_{\gamma(t)}(Z^*,Z^*)=\phi(t^2)+t^2 \psi(t^2), \quad \text{when $Z\in\g{n}_0$}\\
	f_1(t)^2&=g_{\gamma(t)}(Z^*,Z^*)=\phi(t^2)-t^2 \psi(t^2), \quad \text{when $Z\in\g{n}_1$}\\
	f_2(t)^2&=g_{\gamma(t)}(Z^*,Z^*)=t^2+t^4 \eta(t^2), \quad \text{when $Z\in\g{n}_2$},
\end{align*}
 Equivalently, we have for each $k\ge 1$ with $\xi>0$ the following:
\begin{equation}
	\label{eq:smoothproj}
	\begin{aligned}
		h(0)=\xi,& &h'(0)&=0,&  & h^{(2k+1)}(0)=0,\\
		f_1(0)=\xi,& &f_1'(0)&=0,&  &f_1^{(2k+1)}(0)=0,\\
		f_2(0)=0,& &f_2'(0)&=1,&  &f_2^{(2k)}(0)=0.
	\end{aligned}
\end{equation}
We now derive the smoothness conditions at the singular orbit $\s{G}/\s{K}_+$ (corresponding to $t=T$) via the change of variable $s=T-t$. Near $\s{G}/\s{K}_+$ the action is described by the homogeneous vector bundle induced by the triple $\s{H}=\mathbb{Z}_2\times\s{SO}_{n-1} < \s{K}_+ = \s{SO}_2\times\s{SO}_{n-1} < \s{G}=\s{SO}_{n+1}$, with $\s{K}_+/\s{H}\cong\s{S}^1$, so the slice has dimension $\ell_++1=2$. Following the notation of \S\ref{subsec:c1theory} (see also \cite{VZ22}),
\[\g{n}=\g{p}\oplus\g{m}, \qquad \g{p}=T_{e\s{H}}\s{K}_+/\s{H},\quad \g{m}=T_{e\s{K}_+}\s{G}/\s{K}_+.\]
In terms of the splitting $\g{n}=\g{n}_0\oplus\g{n}_1\oplus\g{n}_2$ described in~\eqref{eq:nsplitting}, we have
\begin{equation}
\label{eq:splittingtubequadric}
\g{p}=\g{n}_0, \qquad \g{m}=\g{n}_1\oplus\g{n}_2.
\end{equation}
Indeed, $\g{m}$ can be identified with the tangent space to the singular orbit $\s{G}/\s{K}_+$ at $\gamma(T)$, while $\g{p}\oplus\R\dot\gamma$ can be identified with its normal space.

Fix $X\in\g{p}$ of unit length, namely $X=E_{1,2}$, and define $\s{L}=\{\Exp(\theta E_{1,2}):\theta\in[0,2\pi]\}$. The smallest $\theta_0>0$ with $\Exp(\theta_0 E_{1,2})\in\s{H}=\mathbb{Z}_2\times\s{SO}_{n-1}$ is $\theta_0=\pi$, since $\Exp(\pi E_{1,2})=\mathrm{diag}(-1,-1,1,\ldots,1)$ generates the $\mathbb{Z}_2$-factor of $\s{H}$. Therefore $a=\frac{2\pi}{\theta_0}=2$.

\medskip
We now decompose $\g{m}$ into irreducible $\s{L}$-submodules.  For each $j\ge 3$: $\ad_X E_{1,j}=[E_{1,2},E_{1,j}]=-E_{2,j}$ and $\ad_X E_{2,j}=[E_{1,2},E_{2,j}]=E_{1,j}$. Hence $\spann\{E_{1,j},E_{2,j}\}\subset\g{m}$ is an irreducible $\s{L}$-submodule of weight $d=1$.
Therefore, $\g{m}=\bigoplus_{j=3}^{n+1}\ell_j$, where each $\ell_j=\spann_{\R}\{E_{1,j},E_{2,j}\}$ is an irreducible $\s{L}$-submodule of weight $d=1$.

In the explicit bases above, each $\ell_j$ is spanned by a pair $\{E_{1,j}, E_{2,j}\}$ for some $j\in\{3,\ldots,n+1\}$, with the first vector lying in $\g{n}_1$ and the second in $\g{n}_2$. Since the matrices $E_{i,j}$ are pairwise $\mathcal{Q}$-orthogonal and the metric~\eqref{eq:ansatz} is diagonal with respect to the splitting $\g{n}=\g{n}_0\oplus\g{n}_1\oplus\g{n}_2$, the inner products  $\widetilde{g}_{12}$ within each $\ell_i$ and all cross inner products $\widehat{g}_{km}$   between different submodules $\ell_i$ and $\ell_j$ (as in \S\ref{subsec:c1theory}) vanish identically. The corresponding smoothness conditions are therefore automatically satisfied, and it suffices to check the smoothness conditions for inner products of elements within each $\g{n}_i$ for $i\in\{0,1,2\}$.   Thus, by \S\ref{subsec:c1theory}, the metric $g$ in \eqref{eq:ansatz} extends smoothly to the singular orbit $\s G/\s K_{+}$ if and only if for $Z$ of unit length the following is satisfied:
\begin{align*}
	h(T-s)^2&=g_{\gamma(T-s)}(Z^*,Z^*)=4s^2+s^4\,\eta(s^2), && \text{when $Z\in\g{n}_0$},\\	f_1(T-s)^2&=g_{\gamma(T-s)}(Z^*,Z^*)=\phi(s^2)+s\,\psi(s^2), && \text{when $Z\in\g{n}_1$},\\
	f_2(T-s)^2&=g_{\gamma(T-s)}(Z^*,Z^*)=\phi(s^2)-s\,\psi(s^2), && \text{when $Z\in\g{n}_2$},
\end{align*}
where $\phi,\psi$ and $\eta$ are smooth functions. Equivalently,  for each $k\ge 1$, we have
\begin{equation}
	\label{eq:smoothquadric}
	\begin{aligned}
		h(T)&=0, &h'(T)&=-2, &h^{(2k)}(T)&=0,\\
		f_1(T)&=\zeta_0, &f_1'(T)&=\zeta_1, &f_1^{(k)}(T)&=\zeta_k,\\
		f_2(T)&=\zeta_0, &f_2'(T)&=-\zeta_1, &f_2^{(k)}(T)&=(-1)^k\zeta_k,
	\end{aligned}
\end{equation}
where $'$ denotes derivatives with respect to $t$, $\zeta_0>0$ and $\zeta_k\in\R$ for each $k\ge1$.
Consequently, by the discussion in \S\ref{subsec:c1theory}, the metric $g$ is smooth in $[0,T]$ if and only if it satisfies the conditions provided in \eqref{eq:smoothproj} and \eqref{eq:smoothquadric}.
\begin{remark}
	We remark that the functions $h^{\mathrm{FS}}(t)=\cos(2t)$, $f^{\mathrm{FS}}_1(t)=\cos(t)$ and $f^{\mathrm{FS}}_2(t)=\sin(t)$ satisfy the smoothness conditions at the ends $t=0$ and $t=T=\tfrac{\pi}{4}$. 
\end{remark}

\subsection{The Einstein equations of an $\s{SO}_{n+1}$-invariant metric on $\C\s{P}^n$}
The Einstein equations for a metric of the form \eqref{eq:ansatz} were derived in \cite[\S 4.3]{AGG26}. However, we present here an independent derivation using \cite[Proposition~1.14]{GZ02} for the sake of completeness. We note that \cite[Proposition 4.8]{AGG26} states that an $\s{SO}_{n+1}$-invariant Einstein metric on $\C\s P^n$ cannot have a totally geodesic singular orbit. This statement appears to overlook the Fubini-Study metric or the metric constructed in \hyperref[th:main]{Main Theorem}. In both cases, the singular orbit of the $\s{SO}_{n+1}$-action, namely, $\R\s P^n$, is totally geodesic.

Let $\{e_\alpha\}^{2n-1}_{\alpha=1}$ be a $\mathcal{Q}$-orthonormal basis of $\g{n}$ such that each $e_\alpha$ belongs to a subspace $\g{n}_i$ for some $i\in\{0,1,2\}$.  For each unit $X\in\g{n}_i$ with $i\in\{0,1,2\}$, we define the constants
\begin{equation}
\label{eq:constantsricci}
\Big[{}^{\,s}_{i\,r}\Big]
:=\sum_{e_\alpha\in\g{n}_r}\bigl\|[X,e_\alpha]_{\g{n}_s}\bigr\|_\mathcal{Q}^2,
\qquad
\omega_i:=\sum_{e_\alpha\in\g{n}_i}\bigl\|[X,e_\alpha]_{\g{h}}\bigr\|_\mathcal{Q}^2, \quad \text{where $i,r,s\in\{0,1,2\}$}.
\end{equation}
Notice that the constants in \eqref{eq:constantsricci} are well-defined by Schur's lemma as $\g{n}_i$ is an irreducible $\s H$-module for each $i\in\{0,1,2\}$.
\begin{lemma}\label{lem:structurect}
We have the following identities:
	\[
	\Big[{}^{2}_{0\,1}\Big]=\Big[{}^{1}_{0\,2}\Big]=n-1,\qquad
	\Big[{}^{2}_{1\,0}\Big]=\Big[{}^{0}_{1\,2}\Big]=\Big[{}^{1}_{2\,0}\Big]=\Big[{}^{0}_{2\,1}\Big]=1,
	\]
	and the rest of the coefficients $\Big[{}^{\,s}_{i\,r}\Big]$ are zero. Moreover,  $\omega_0=0$, $\omega_1=\omega_2=n-2$.
\end{lemma}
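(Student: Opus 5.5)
The plan is to compute everything directly from the commutation relations of the elementary skew matrices $E_{i,j}$. These satisfy $[E_{i,j},E_{j,k}]=E_{i,k}$ for distinct $i,j,k$ (with $E_{j,i}:=-E_{i,j}$), and the bracket vanishes when the index pairs are disjoint. With $\mathcal{Q}(X,Y)=-\tfrac12\tr(XY)$ we have $\mathcal{Q}(E_{i,j},E_{i,j})=1$. Hence $\{E_{1,2}\}$, $\{E_{1,j}\}_{j\ge3}$ and $\{E_{2,j}\}_{j\ge3}$ are $\mathcal{Q}$-orthonormal bases of $\g{n}_0$, $\g{n}_1$ and $\g{n}_2$, and $\{E_{k,l}\}_{3\le k<l}$ is an orthonormal basis of $\g{h}$. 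By Schur's lemma the constants in \eqref{eq:constantsricci} do not depend on the unit vector $X$, so I fix $X=E_{1,2}$ for $i=0$, $X=E_{1,3}$ for $i=1$, and $X=E_{2,3}$ for $i=2$. Each constant then reduces to a count of the basis vectors $e_\alpha$ whose bracket with $X$ is a nonzero (unit) matrix lying in the relevant module.

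For $i=0$ with $X=E_{1,2}$, the relations give $[E_{1,2},E_{1,j}]=-E_{2,j}\in\g{n}_2$ and $[E_{1,2},E_{2,j}]=E_{1,j}\in\g{n}_1$ for each of the $n-1$ indices $j\ge3$, and $[E_{1,2},E_{1,2}]=0$. Therefore $\big[{}^{2}_{0\,1}\big]=\big[{}^{1}_{0\,2}\big]=n-1$, all other constants with $i=0$ vanish, and no bracket has an $\g{h}$-component, so $\omega_0=0$.

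For $i=1$ with $X=E_{1,3}$, I go through the basis of each module in turn:
\begin{itemize}
\item on $\g{n}_0$: $[E_{1,3},E_{1,2}]=E_{2,3}\cdot(\pm1)\in\g{n}_2$, which gives $\big[{}^{2}_{1\,0}\big]=1$;
\item on $\g{n}_2$: $[E_{1,3},E_{2,3}]=\pm E_{1,2}\in\g{n}_0$, while $[E_{1,3},E_{2,j}]=0$ for $j\ge4$, which gives $\big[{}^{0}_{1\,2}\big]=1$;
\item on $\g{n}_1$: $[E_{1,3},E_{1,j}]=\pm E_{3,j}\in\g{h}$ for $j\ge4$, which gives $n-2$ unit contributions to $\omega_1$ and nothing to any $\big[{}^{s}_{1\,1}\big]$.
\end{itemize}
So $\omega_1=n-2$. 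The case $i=2$ with $X=E_{2,3}$ is symmetric under swapping the indices $1\leftrightarrow2$, which exchanges $\g{n}_1$ and $\g{n}_2$ and fixes $\g{n}_0$ and $\g{h}$. This yields $\big[{}^{1}_{2\,0}\big]=\big[{}^{0}_{2\,1}\big]=1$ and $\omega_2=n-2$.

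There is no real obstacle here; the only care needed is bookkeeping. I have to make sure every bracket lands in exactly one module, which holds because brackets of the $E_{i,j}$ are again $\pm E_{k,l}$ and $\mathcal{Q}$ is diagonal in that basis. Signs are irrelevant since only squared norms enter. As a sanity check, I will confirm the symmetry $\big[{}^{s}_{i\,r}\big]\dim\g{n}_i=\big[{}^{i}_{s\,r}\big]\dim\g{n}_s$, for example $(n-1)\cdot1=1\cdot(n-1)$.
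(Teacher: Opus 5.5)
Your proposal is correct and follows the paper's proof: both fix the representatives $E_{1,2}$, $E_{1,3}$, $E_{2,3}$, list their brackets with the $\mathcal{Q}$-orthonormal bases of $\g{n}_0,\g{n}_1,\g{n}_2$, count the unit contributions, and treat the $\g{n}_2$ case by symmetry. The only cosmetic difference is that the paper first disposes of the vanishing constants via the inclusions $[\g{n}_0,\g{n}_1]\subset\g{n}_2$, $[\g{n}_1,\g{n}_2]\subset\g{n}_0$, $[\g{n}_2,\g{n}_0]\subset\g{n}_1$, whereas your explicit bracket lists already show this vanishing directly.
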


\begin{proof}
First of all, notice that $\Big[{}^{\,s}_{i\,r}\Big]=0$ for any triple $(i,r,s)$, which is not a permutation of $(0,1,2)$ because of the relations:
\begin{equation*}
	[\g{n}_0,\g{n}_1]\subset \g{n}_2,\qquad
	[\g{n}_1,\g{n}_2]\subset \g{n}_0,\qquad
	[\g{n}_2,\g{n}_0]\subset \g{n}_1.
\end{equation*}
Take $X=E_{1,2}\in\g{n}_0$. We have $[X,E_{1,k}]=-E_{2,k}\in\g{n}_2$ and $[X,E_{2,k}]=E_{1,k}\in\g{n}_1$ 	for $k=3,\dots,n+1$, while $[X,E_{1,2}]=0$. Summing the squared $\mathcal{Q}$-norms over each $\g{n}_r$ gives $	\Big[{}^{2}_{0\,1}\Big]=\Big[{}^{1}_{0\,2}\Big]=n-1$. Also $\omega_0=0$, since
	$[\g{n}_0,\g{n}_0]=0$. 	Take $X=E_{1,3}\in\g{n}_1$. Then $[X,E_{1,2}]=E_{2,3}\in\g{n}_2$
	(contributes $1$ to $\big[{}^{2}_{1\,0}\big]$), $[X,E_{2,3}]=-E_{1,2}\in\g{n}_0$
	(contributes $1$ to $\big[{}^{0}_{1\,2}\big]$), $[X,E_{1,\ell}]=-E_{3,\ell}\in\g{h}$
	for $\ell\geq 4$ (contributing $n-2$ to $\omega_1$), and $[X,E_{2,\ell}]=0$
	for $\ell\neq 3$. The case $X=E_{2,3}\in\g{n}_2$ is identical.
\end{proof}
 Now we are ready to compute the Ricci tensor. Let $X_i\in\g{n}_i$ be a unit vector for each $i\in\{0,1,2\}$ and denote $h$ by $f_0$ for convenience. By \cite[Proposition~1.14 \textup{(a)}]{GZ02}, we have 
\begin{equation}
\label{eq:ric_ni}
\Ric(X_i^*,X_i^*)=\omega_i	+
\sum^2_{r,s=0}
\frac{f_i^{4}-2f_r^{4}+2f_r^{2}f_s^{2}}
{4f_r^{2}f_s^{2}}
\Big[{}^{\,s}_{i\,r}\Big]
+
\left(
-\frac{f_i'}{f_i}
\sum^2_{r=0} \frac{f_r'}{f_r}\dim\mathfrak n_r
+\frac{(f_i')^2}{f_i^2}
-\frac{f_i''}{f_i}
\right)
f_i^2.
\end{equation}
 Moreover, we have that for $i\neq j$,  the brackets $[X_i,e_\alpha]$ and $[X_j,e_\alpha]$ lie in distinct modules
$\g{n}_k$. Thus, again by \cite[Proposition~1.14 \textup{(b)} and \textup{(c)}]{GZ02}, we have
\begin{equation}
\label{eq:ricmix}
\Ric(X_i,X_j)=\Ric(X_i,\dot{\gamma})=0,
\end{equation}
where $i\neq j$ and $\dot{\gamma}$ is the velocity of the normal geodesic $\gamma$. Finally, by \cite[Proposition~1.14 \textup{(d)}]{GZ02},
\begin{equation}
\label{eq:rictt}
\Ric(\dot{\gamma},\dot{\gamma})=-\frac{h''}{h} - (n-1)\frac{f''_1}{f_1} - (n-1) \frac{f''_2}{f_2}.
\end{equation}
Since each $\g{n}_i$ is $\Ad(\s H)$-irreducible, Schur's lemma implies that $\Ric|_{\g{n}_i\times\g{n}_i}$ is a multiple of $g|_{\g{n}_i\times\g{n}_i}$. Thus, combining Lemma~\ref{lem:structurect} and~\eqref{eq:ric_ni}, \eqref{eq:ricmix}, \eqref{eq:rictt} we have:
\begin{proposition}\label{prop:Einstein}
	The metric $g$ in \eqref{eq:ansatz} is an Einstein metric with Einstein constant $\lambda$, i.e., $\Ric=\lambda g$, if and only if the functions $h,f_1,f_2$ satisfy the following system of ODEs:
		\begin{align}
		\label{eq:E1}
		&\frac{h''}{h}
		+(n-1)\frac{h'}{h} \left(\frac{f_1'}{f_1}+\frac{f_2'}{f_2}\right)-\frac{(n-1)}{2} \left( \frac{h^2}{f_1^2 f_2^2} - \frac{f_1^2}{h^2 f_2^2}-\frac{f_2^2}{h^2 f_1^2}\right)
		+\frac{1-n}{h^2}
		=-\lambda,\\[2pt]
		\label{eq:E2}
		& \frac{f_1''}{f_1}
		+\frac{f_1'}{f_1} \left(\frac{h'}{h}+(n-2)\frac{f_1'}{f_1}+(n-1)\frac{f_2'}{f_2}\right)-
		\frac{1}{2} \left(\frac{f_1^2}{ h^2 f_2^2} - \frac{h^2}{f_1^2 f_2^2} - \frac{f_2^2}{h^2 f_1^2}\right)+\frac{1-n}{f_1^2}
		=-\lambda,\\[2pt]
		\label{eq:E3}
		&\frac{f_2''}{f_2}
		+\frac{f_2'}{f_2} \left(\frac{h'}{h}+(n-1)\frac{f_1'}{f_1}+(n-2)\frac{f_2'}{f_2}\right)-\frac{1}{2} \left(\frac{f_2^2}{ h^2 f_1^2}-\frac{h^2}{f_1^2 f_2^2}-\frac{f_1^2}{h^2 f_2^2}\right) + \frac{1-n}{f_2^2} 
		=-\lambda,\\[2pt]
		\label{eq:E4}
		&\frac{h''}{h}+(n-1)\frac{f_1''}{f_1}+(n-1)\frac{f_2''}{f_2}=-\lambda,
	\end{align}
subject to the smoothness conditions described in \eqref{eq:smoothproj} and \eqref{eq:smoothquadric}.	
\end{proposition}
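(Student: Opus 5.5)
The plan is to assemble the Einstein condition block by block from the Ricci formulas \eqref{eq:ric_ni}, \eqref{eq:ricmix} and \eqref{eq:rictt}, using the structure constants of Lemma~\ref{lem:structurect}.

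First, the block structure. Since each $\g{n}_i$ is $\Ad(\s H)$-irreducible and the metric is diagonal, \eqref{eq:ricmix} shows that $\Ric$ has no mixed terms between distinct $\g{n}_i$, nor between $\g{n}_i$ and $\dot\gamma$. Hence $\Ric=\lambda g$ is equivalent to four scalar conditions:
\[
\Ric(X_i^*,X_i^*)=\lambda f_i^2 \quad (i=0,1,2), \qquad \Ric(\dot\gamma,\dot\gamma)=\lambda .
\]
The last condition is exactly \eqref{eq:E4}, after changing sign in \eqref{eq:rictt}.

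Next, the tangential equations. For each $i$, divide \eqref{eq:ric_ni} by $-f_i^2$. The derivative part becomes
\[
\frac{f_i''}{f_i}+\frac{f_i'}{f_i}\sum_r \dim\g{n}_r\frac{f_r'}{f_r}-\frac{(f_i')^2}{f_i^2},
\]
with $\dim\g{n}_0=1$ and $\dim\g{n}_1=\dim\g{n}_2=n-1$. Subtracting the square term turns the coefficient of $f_i'/f_i$ in its own slot into $n-2$ for $i=1,2$, and into $0$ for $i=0$. This produces the first-order terms of \eqref{eq:E1}--\eqref{eq:E3}.

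For the algebraic part, only the permutation brackets survive by Lemma~\ref{lem:structurect}. For $i=0$, the nonzero pairs are $(r,s)=(1,2)$ and $(2,1)$, each with coefficient $n-1$, so the summand sums to
\[
(n-1)\,\frac{2h^4-2f_1^4-2f_2^4+4f_1^2f_2^2}{4f_1^2f_2^2}.
\]
Together with $\omega_0=0$ and division by $-h^2$, this should give the bracket term of \eqref{eq:E1}. The expected main obstacle is the constant term $(1-n)/h^2$: it does not come out naively, since the $f_1^2f_2^2$ cross terms produce a $-(n-1)/h^2$. So I will carefully track the $2f_r^2f_s^2$ contributions. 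They yield the $\frac{1-n}{h^2}$ term, and the remaining pieces give $-\frac{n-1}{2}\bigl(\frac{h^2}{f_1^2f_2^2}-\frac{f_1^2}{h^2f_2^2}-\frac{f_2^2}{h^2f_1^2}\bigr)$.

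For $i=1$, the brackets $\bigl[{}^{2}_{1\,0}\bigr]=\bigl[{}^{0}_{1\,2}\bigr]=1$ combine with $\omega_1=n-2$. The cross terms contribute an extra $1$, so $\omega_1+1=n-1$ gives the $\frac{1-n}{f_1^2}$ term, and the rest gives the bracket of \eqref{eq:E2}. The case $i=2$ is symmetric under $f_1\leftrightarrow f_2$ and gives \eqref{eq:E3}.

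Finally, smoothness. The metric $g$ on $M^{\mathrm{reg}}$ extends smoothly to $\C\s P^n$ exactly under \eqref{eq:smoothproj} and \eqref{eq:smoothquadric}, as established in \S\ref{subsec:smoothnesscpn}. By continuity, the Einstein identity on the dense set $M^{\mathrm{reg}}$ then holds everywhere, which gives the equivalence.
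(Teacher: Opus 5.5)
Your proposal is correct and follows the paper's route. The paper likewise obtains the system by inserting the structure constants of Lemma~\ref{lem:structurect} into the Grove--Ziller formulas \eqref{eq:ric_ni}, \eqref{eq:ricmix} and \eqref{eq:rictt}, with Schur's lemma reducing $\Ric=\lambda g$ to one scalar equation per block. The ``obstacle'' you flag for the $\frac{1-n}{h^2}$ term is not actually an obstacle: the cross terms give $-(n-1)/h^2$ after dividing by $-h^2$, and this is exactly $\frac{1-n}{h^2}$.
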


\begin{remark}
Observe that the equations are satisfied by $h^{\mathrm{FS}}(t)=\cos(2t)$, $f^{\mathrm{FS}}_1(t)=\cos(t)$ and $f^{\mathrm{FS}}_2(t)=\sin(t)$ with $\lambda=2(n+1)$. This is precisely the Einstein constant for the Fubini-Study metric $g_{\mathrm{FS}}$ on $\C\s P^n$, where $g_{\mathrm{FS}}$ is scaled such that its minimum sectional curvature is equal to~$1$. An alternative check comes from submanifold theory and the study of the geometry of tubes. By~\eqref{eq:eigshape}, the principal curvatures of a tube of radius $t>0$ around the totally geodesic $\R \s P^n$ in ($\C \s{P}^n$, $g_{\mathrm{FS}}$) are: $2\tan(2t)$, $\tan(t)$, and $-\cot(t)$, with multiplicities $1$, $n-1$ and $n-1$, respectively, matching the known literature, see~e.g.~\cite[p.~352]{CR15}.

\end{remark}

\begin{remark}\label{rem:firstintegral}
	The Einstein system \eqref{eq:E1}--\eqref{eq:E4} consists of four equations
	for three unknowns, but it is not overdetermined: the four equations are
	dependent. Indeed, in the combination
	$\eqref{eq:E1}+(n-1)\eqref{eq:E2}+(n-1)\eqref{eq:E3}-\eqref{eq:E4}$ the
	second-derivative terms cancel and, after dividing by $n-1$, we obtain
    \begin{equation}\label{eq:conservation}
	\begin{split}
		2\,\frac{h'f_1'}{hf_1}+2\,\frac{h'f_2'}{hf_2}
		+2(n-1)\frac{f_1'f_2'}{f_1f_2}
		+(n-2)\Big(\frac{f_1'^2}{f_1^2}+\frac{f_2'^2}{f_2^2}\Big)
		&=\frac{1}{h^2}+\frac{n-1}{f_1^2}+\frac{n-1}{f_2^2}\\[2pt]
		&\quad-\frac{h^2}{2f_1^2f_2^2}-\frac{f_1^2}{2h^2f_2^2}
		-\frac{f_2^2}{2h^2f_1^2}-2\lambda ,
	\end{split}
\end{equation}
	which has no second derivative present. Therefore, solving the system \eqref{eq:E1}--\eqref{eq:E4} is equivalent to solving any three of the equations, together with the solution $(f_i, f_i')$ living on the 5-dimensional invariant manifold in phase space fixed by the constraint \eqref{eq:conservation}. This first integral is well known. Writing $\mathrm{scal}(g_t)$ for the scalar curvature of the principal orbit and $H=\tr\mathcal S_t$ for its mean curvature, Eschenburg and Wang obtain it in \cite[Remark~2.2]{EW00} in the form $\mathrm{scal}(g_t)-H^{2}+\tr(\mathcal S_t^{2})=(2n-2)\lambda$. Since it involves only $H^{2}$ and $\tr(\mathcal S_t^{2})$, it is independent of the sign convention for $\mathcal S_t$, and dividing by $n-1$ recovers \eqref{eq:conservation}. In particular, we can check that our system \eqref{eq:E1}--\eqref{eq:E4} agrees with \cite[Proposition~2.1]{EW00}.
\end{remark}

\section{The $H$-reparametrization and the matching problem}
\label{sec:H-rep}
Let us start this section by recalling some well-known facts about the behaviour of the mean curvature $H$ of the principal orbits of a cohomogeneity-one Einstein metric with constant $\lambda>0$.

We follow the notation from \S\ref{subsec:c1theory}. Let $M$ be a simply connected compact cohomogeneity-one manifold equipped with a $\s G$-invariant metric as in \eqref{eq:ansatzcohomo1metric}. Recall that $\mathcal S_t$ denotes the
shape operator of the principal orbit $\Sigma_t=\s G\cdot\gamma(t)$ with
respect to $\dot\gamma(t)$, and define the mean curvature of $\Sigma_t$ as $H(t):=\tr\mathcal S_t$. Consider the smooth function $\mathrm{vol}\colon(0,T)\to\R$, $\mathrm{vol}(t):=\mathrm{Vol}(\Sigma_t)$, where $\mathrm{Vol}$ denotes the volume of the hypersurface $\Sigma_t$. The first
variation of hypersurface volume  yields
\begin{equation*}
	H(t)=-\frac{\mathrm{vol}'(t)}{\mathrm{vol}(t)}=-\frac{d}{dt}\log \mathrm{vol}(t),
\end{equation*}
so $H$ is the opposite of the logarithmic derivative of the orbit
volume.  Notice that if $M$ is simply connected, by~\cite[Proposition 6.39 (ii)]{ABbook}, the principal orbits collapse to a  lower-dimensional singular orbit at the endpoints of the interval $[0,T]$. Thus,  $\mathrm{vol}(t)\to 0$ both as $t\to 0^+$ and as $t\to T^-$, and the rate of vanishing is governed by the number of collapsing dimensions.

Indeed, recall that $\ell_{\pm}:=\dim(\s K_{\pm}/\s H)=\dim\Sigma_t-\dim \s G/\s K_{\pm}$, and thus, $\ell_{\pm}$ is the number of collapsing
dimensions at $t=0$ and $t=T$, respectively. The hypersurface
$\Sigma_t$ is a  tube of radius $t$ around the singular orbit $\s G/\s K_-$. By \cite[Theorem~3.11]{Gr04}, we have
\[
\frac{\vartheta_u'(t)}{\vartheta_u(t)}
= -\Big(\frac{\ell_-}{t}+H(t)\Big),
\]
along the normal geodesic $\gamma$, where $\vartheta_u$ is the normal volume density at $\gamma(0)=p_-$
in direction $u=\dot{\gamma}(0)$. As $\vartheta_u(0)=1$ (see \cite[Lemma~3.7]{Gr04}), then $\frac{\vartheta_u'(t)}{\vartheta_u(t)}$ is bounded as $t\to 0^+$, and hence
\[
H(t)=-\frac{\ell_-}{t}+O(1)
\quad\text{as }t\to 0^+.
\]
Analogously, when $t\to T^-$, we have $H(t)=\frac{\ell_+}{T-t}+O(1)$.
Now let us assume that $g$ is an Einstein metric with Einstein constant equal to $\lambda>0$. Notice that
\[T=d(\gamma(0),\gamma(T))\leq \mathrm{diam}(M)\leq \pi\sqrt{\frac{\dim(M)-1}{\lambda}},\]
where we have used Bonnet--Myers theorem. Notice that if $g$ is not the round metric on the sphere, by Cheng's diameter rigidity theorem we have the strict inequality $T<\pi\sqrt{\frac{\dim(M)-1}{\lambda}}$.
Finally, by taking traces on Riccati's equation (see~\eqref{eq:riccati}), we have
\[H'(t)=\lambda + \tr(\mathcal{S}^2_t)\ge \lambda + \frac{H^2(t)}{\dim(M) -1}>0, \]
where we have used Cauchy--Schwarz inequality to bound $\tr(\mathcal{S}^2_t)$.
Notice that this implies that $H$ has a unique zero in $t\in(0,T)$, which corresponds to the unique minimal principal orbit of the $\s G$-action on $M$. Consequently, by the discussion above we have:
\begin{lemma}\label{lem:focusing}
	Along any solution of \eqref{eq:E1}--\eqref{eq:E4},
	\begin{equation}\label{eq:Qdef}
		H'(t)=Q(t),
		\qquad\text{where}\quad
		Q(t):=\lambda+\tr(\mathcal S_t^2)\;\ge\;\lambda+\frac{H^2(t)}{2n-1}.
	\end{equation}
	Moreover, assuming the smoothness conditions \eqref{eq:smoothproj} and
	\eqref{eq:smoothquadric}, the mapping $t\in(0,T)\mapsto H(t)$ is an increasing
	diffeomorphism from $(0,T)$ onto $\R$, and the following is satisfied:
	\begin{enumerate}
		\item[\textup{(i)}] $T<\pi\sqrt{\tfrac{2n-1}{\lambda}}$,
		\item[\textup{(ii)}] $H$  blows up at $t=0$ and $t=T$ as follows:
		\[
		H(t)=\frac{1-n}{t}+O(1)
		\quad\text{as }t\to 0^+, \quad H(t)=\frac{1}{T-t}+O(1)
		\quad\text{as }t\to T^-.
		\]
		\item[\textup{(iii)}] $H$ has a unique zero.
	\end{enumerate}
\end{lemma}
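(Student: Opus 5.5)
The plan is to derive the identity $H'=Q$ directly from the Riccati equation and then read off the remaining claims from the explicit form of the shape operator. By \eqref{eq:riccati}, $\mathcal S_t'=\mathcal S_t^2+R_{\dot\gamma}$. Taking traces gives $H'=\tr(\mathcal S_t^2)+\tr R_{\dot\gamma}$. Moreover $\tr R_{\dot\gamma}=\sum_\alpha g(R(e_\alpha,\dot\gamma)\dot\gamma,e_\alpha)=\Ric(\dot\gamma,\dot\gamma)=\lambda$ on a solution. Concretely, using \eqref{eq:eigshape} and \eqref{eq:jacobit}, $H'=-\sum_i \dim\g n_i\,(\log f_i)''$ and $\tr\mathcal S_t^2=\sum_i\dim\g n_i\,(f_i'/f_i)^2$. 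Their difference is $-\bigl(h''/h+(n-1)f_1''/f_1+(n-1)f_2''/f_2\bigr)$, which equals $\lambda$ by \eqref{eq:E4}, so $H'=Q$. The lower bound follows from Cauchy--Schwarz on the $2n-1=\dim\g n$ eigenvalues of the symmetric operator $\mathcal S_t$: $(\tr\mathcal S_t)^2\le(2n-1)\tr\mathcal S_t^2$.

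For (ii), I would not appeal to the volume-density result but compute directly from \eqref{eq:smoothproj} and \eqref{eq:smoothquadric}. Near $t=0$, $h$ and $f_1$ are even with positive values, so $h'/h,f_1'/f_1=O(t)$. Meanwhile $f_2=t+O(t^3)$ gives $f_2'/f_2=1/t+O(t)$. Hence $H=-(n-1)/t+O(1)$. Near $t=T$, $h=2(T-t)+O((T-t)^3)$ gives $-h'/h=1/(T-t)+O(T-t)$, while $f_1,f_2\to\zeta_0>0$ keeps their logarithmic derivatives bounded. Hence $H=1/(T-t)+O(1)$. This agrees with $\ell_-=n-1$ and $\ell_+=1$.

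For monotonicity, $H'\ge\lambda>0$ on $(0,T)$ (here $\lambda>0$ is part of the standing assumption), so $H$ is strictly increasing. By (ii), $H\to-\infty$ as $t\to0^+$ and $H\to+\infty$ as $t\to T^-$. By continuity and the intermediate value theorem, $H$ is a bijection $(0,T)\to\R$, and since $H'>0$ it is a diffeomorphism. This gives (iii). For (i), I would use Bonnet--Myers with $\dim M=2n$: $T=d(\gamma(0),\gamma(T))\le\operatorname{diam}\le\pi\sqrt{(2n-1)/\lambda}$. Equality would force $(M,g)$ to be a round sphere by Cheng's theorem, which is impossible since $\C\s P^n$ is not homeomorphic to $\s S^{2n}$ (it has nontrivial $H^2$).

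Alternatively, (i) follows from the ODE alone. Comparing $H'\ge\lambda+H^2/(2n-1)$ with the Riccati comparison solution $\sqrt{(2n-1)\lambda}\tan\bigl(\sqrt{\lambda/(2n-1)}\,(t-t_0)\bigr)$, which blows up on an interval of length $\pi\sqrt{(2n-1)/\lambda}$, bounds $T$; strictness would need a little extra care. The only delicate point in the whole argument is justifying that the first-order asymptotics in (ii) hold with $O(1)$ remainders, and this is immediate from the parity expansions above.
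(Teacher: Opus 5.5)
Your proposal is correct, and for most of the lemma it follows the paper. The shared steps are: tracing the Riccati equation \eqref{eq:riccati} with Cauchy--Schwarz to get $H'=Q$; Bonnet--Myers plus Cheng's rigidity for (i); and $H'>0$ together with the blow-up for the diffeomorphism property and (iii).

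The genuine difference is in (ii). The paper writes $H=-\tfrac{d}{dt}\log\mathrm{Vol}(\Sigma_t)$, regards $\Sigma_t$ as a tube around $\s G/\s K_-$, and uses Gray's tube formula $\vartheta_u'/\vartheta_u=-(\ell_-/t+H)$ with $\vartheta_u(0)=1$. The blow-up rate then comes from the number $\ell_\pm$ of collapsing dimensions, which holds for any cohomogeneity-one metric, not just the diagonal ansatz; the end $t=T$ is only treated ``analogously''. You instead read the asymptotics directly off the parity conditions \eqref{eq:smoothproj}, \eqref{eq:smoothquadric} and the eigenvalues \eqref{eq:eigshape}. Your route is more elementary, uses exactly the hypotheses of the lemma, and handles both ends explicitly. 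It even gives remainders $O(t)$ and $O(T-t)$; at $t=T$ this is because $f_1'/f_1+f_2'/f_2\to 0$. The cost is that it is tied to the ansatz \eqref{eq:ansatz}.

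Your ODE-only alternative for (i) also works, and strictness needs no appeal to Cheng. Set $\phi:=\arctan\bigl(H/\sqrt{(2n-1)\lambda}\bigr)$. Then $\phi'\ge\sqrt{\lambda/(2n-1)}$ and $\int_0^T\phi'\,dt=\pi$. The inequality is strict near $t=0$, since $\mathcal S_t$ is not umbilical there: $-h'/h\to 0$ while $-f_2'/f_2\to-\infty$. Hence $T<\pi\sqrt{(2n-1)/\lambda}$.
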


\subsection{The $H$-reparametrization of the system}\label{subsec:rewriting}
The goal of this subsection is to rewrite the system \eqref{eq:E1}--\eqref{eq:E4} as a first-order system in scale-invariant variables. The system \eqref{eq:E1}--\eqref{eq:E4} is an autonomous six-dimensional system (three ODEs of second order). Restricting to the five-dimensional manifold of $(h, f_1, f_2, h', f_1', f_2')$ that satisfies \eqref{eq:conservation}, we obtain a five-dimensional system. In this subsection we will replace the $t \in (0, T)$ coordinate by $H \in (-\infty, \infty)$, which we can do thanks to the monotonicity of Lemma \ref{lem:focusing}. With that reduction we will obtain a four-dimensional \textit{non-autonomous} system, which we state in scale-invariant variables. We will derive the new ODE system \eqref{eq:abUV} and prove that it is equivalent to \eqref{eq:E1}--\eqref{eq:E4} (Proposition \ref{prop:equivalence}). The corresponding smoothness conditions in the $H$-formulation will be discussed in the next subsection.
We now define the scale-invariant variables:
\begin{equation}\label{eq:uvdef}
	a:=\frac{f_1^2}{h^2},
	\qquad
	b:=\frac{f_2^2}{h^2},
	\qquad
	u:=\Big(\log\frac{f_1}{h}\Big)'=\frac{f_1'}{f_1}-\frac{h'}{h},
	\qquad
	v:=\Big(\log\frac{f_2}{h}\Big)'=\frac{f_2'}{f_2}-\frac{h'}{h}.
\end{equation}
\begin{remark}
\label{rem:variablesgeom}
The variables $(a,b)$ are scale-invariant under the simultaneous rescaling given by
	$(h(t),f_1(t),f_2(t))\mapsto(\kappa\, h(t/\kappa), \kappa\, f_1(t/\kappa), \kappa\, f_2(t/\kappa))$, which leaves the Einstein
	system \eqref{eq:E1}--\eqref{eq:E4} invariant up to $\lambda\mapsto\kappa^{-2}\lambda$. 	We note that the use of logarithmic ratios as a change of variables in this cohomogeneity-one setting goes back at least to B\"ohm~\cite[\S4]{B98} in the case that $\mathcal{P}_t$ defined in \eqref{eq:Pt} involves only two functions,	and since then it has been employed in other works such as~\cite{Br26}.

   Moreover, we can give a geometric meaning to the variables $(u,v)$. By~\eqref{eq:eigshape}, the condition $u(t_0)=v(t_0)=0$ is
	equivalent to $\frac{h'}{h}(t_0)=\frac{f_1'}{f_1}(t_0)=\frac{f_2'}{f_2}(t_0)$,
	which means that the shape operator $\mathcal{S}_{t_0}$ is a multiple of the
	identity, that is, the principal orbit $\Sigma_{t_0}$ is totally umbilical
	in $(M,g)$. If moreover $u\equiv v\equiv 0$ on $(0,T)$,
	then $f_1=c_1 h$ and $f_2=c_2 h$ on $(0,T)$ for some constants $c_1,c_2>0$, and
	the metric \eqref{eq:ansatz} takes the warped-product form
	$g|_{(0,T)\times \s G/\s H}=dt^2+h(t)^2\, g_{\s G/\s H}$, where $g_{\s G/\s H}$
	is the homogeneous metric on the principal orbit induced by $\mathcal{Q}|_{\g{n}_0}
	+c_1^2 \mathcal{Q}|_{\g{n}_1}+c_2^2 \mathcal{Q}|_{\g{n}_2}$.
\end{remark}
Recall that $\ric_i(g_t)=\Ric_{g_t}(\hat X^*_i,\hat X^*_i)$, with $\hat X_i\in\g{n}_i$ a $g_t$-unit vector. It can be extracted from~\eqref{eq:E1}, \eqref{eq:E2}, \eqref{eq:E3} by taking the summands that do not contain derivatives. In terms of the variables $a,b$ defined in \eqref{eq:uvdef} one obtains
\begin{align*}\begin{split}\label{eq:ricformulas}
		\ric_0(g_t)&=\frac{(n-1)(1-(a-b)^2)}{2abh^2},\qquad
		\ric_1(g_t)=\frac{a^2 - (b+1)^2+2bn}{2abh^2},\\
		\ric_2(g_t)&=\frac{b^2-1-a(2+a-2n)}{2abh^2}.
\end{split}\end{align*}
In particular, it is convenient to write the difference of the Ricci eigenvalues $\ric_i(g_t)-\ric_0(g_t)$, with $i\in\{1,2\}$, and the scalar
curvature
$\mathrm{scal}(g_t)=\ric_0(g_t)+(n-1)\big(\ric_1(g_t)+\ric_2(g_t)\big)$
as
\begin{equation}\label{eq:ricdiffs}
	\begin{aligned}
	\ric_1(g_t)-\ric_0(g_t)&=\frac{N_1}{2abh^2}, &
\ric_2(g_t)-\ric_0(g_t)&=\frac{N_2}{2abh^2},\\ \ric_2(g_t)-\ric_1(g_t)&=\frac{N_3}{a b h^2},&
\mathrm{scal}(g_t)&=\frac{(n-1)D}{2abh^2},
	\end{aligned}
\end{equation}
where
\begin{equation}
\label{eq:abuvquantities}
\begin{aligned}
N_1&:=(n-2) b^2 + na^2 - 2(n-1) ab  + 2(n-1) b - n, \\
N_2&:= (n-2) a^2 + n b^2 - 2 (n-1) a b + 2(n-1)a - n, \\
N_3&:=	(b-a) (a+b-n+1)\\
D&:= 2(n-1)(a+b) + 2ab -a^2 - b^2 - 1.
\end{aligned}
\end{equation}
In~\eqref{eq:logform}, if we subtract the equation for $i=0$ from those for $i=1$ and $i=2$ we obtain
\begin{equation*}
	u'-Hu=\ric_1(g_t)-\ric_0(g_t),
	\qquad
	v'-Hv=\ric_2(g_t)-\ric_0(g_t).
\end{equation*}
Now, since $(\log a)'=2\big(\log(f_1/h)\big)'=2u$ and likewise $(\log b)'=2v$, by using~\eqref{eq:ricdiffs} and~\eqref{eq:abuvquantities} we conclude the system of equations:
\begin{align*}
	a' = 2 a u, \quad
	b' = 2 b v, \quad
	u' = Hu + \frac{N_1}{(n-1)D}\,\mathrm{scal}(g_t), \quad
	v' = Hv + \frac{N_2}{(n-1)D}\,\mathrm{scal}(g_t),
\end{align*}
where we observe that all $N_i$, $D$ and $\mathrm{scal}(g_t)$ just depend on $a, b$, so we regard this as a first-order system in $(a, b, u, v)$.
Since our objective is to use $H$ as a reparametrization of $t$, we also notice that the quantity $Q$ of
Lemma~\ref{lem:focusing} depends only on $u, v$ and $H$. The eigenvalues of $\mathcal S_t$ are $-f_i'/f_i$ with
multiplicities $(1,n-1,n-1)$, and their pairwise differences are exactly
$u$, $v$ and $u-v$; hence, by the Lagrange identity
$\big(\sum_id_i\big)\big(\sum_id_iL_i^2\big)-\big(\sum_id_iL_i\big)^2
=\sum_{i<j}d_id_j(L_i-L_j)^2$ applied to $L_i=-f_i'/f_i$ with weights
$(d_0,d_1,d_2)=(1,n-1,n-1)$, we express $\tr(\mathcal{S}^2_t)$ in terms of $u,v$ and $H$, and thus
\begin{equation}\label{eq:Qcoord}
	Q=\lambda+\tr(\mathcal S_t^2)
	=\lambda+\frac{H^2+(n-1)\big[nu^2+nv^2-2(n-1)uv\big]}{2n-1}.
\end{equation}
We also notice that the quantity $\mathrm{scal}(g_t)$ is explicit in terms of $H$ and $Q$ as by tracing~\eqref{eq:rictan}, 
\begin{equation}\label{eq:constraintH}
	\mathrm{scal}(g_t) = H^2-Q+(2n-1)\lambda .
\end{equation}
All in all, recalling from Lemma~\ref{lem:focusing} that $\tfrac{d}{dH}=\tfrac{1}{Q}\tfrac{d}{dt}$, we arrive at the non-autonomous system:
\begin{equation}\label{eq:abUV}
	\begin{aligned}
		\frac{da}{dH} &= \frac{2 a u}{Q}, &\qquad
		\frac{du}{dH} &= \frac{(n-1)DHu+\big(H^2-Q+(2n-1)\lambda\big)N_1}{(n-1)DQ},\\[2pt]
		\frac{db}{dH} &= \frac{2 b v}{Q}, &\qquad
		\frac{dv}{dH} &= \frac{(n-1)DHv+\big(H^2-Q+(2n-1)\lambda\big)N_2}{(n-1)DQ},
	\end{aligned}
\end{equation}
where the quantities $N_1$, $N_2$, $D$, and $Q$ are given by~\eqref{eq:abuvquantities} and \eqref{eq:Qcoord}, and only depend on $a,b,u,v$ and $H$. In conclusion, the derivation above shows how solutions of \eqref{eq:E1}--\eqref{eq:E4} give rise to solutions of \eqref{eq:abUV}. The following proposition provides the converse, which is the direction needed for the matching argument.
\begin{proposition}\label{prop:equivalence}
	Let $(a,b,u,v)\colon\R\to\R^4$ be a solution of \eqref{eq:abUV} (with $D, N_i, Q$ defined in \eqref{eq:abuvquantities} and \eqref{eq:Qcoord}) satisfying $	a,b>0$ and $\tfrac{H^2-Q+(2n-1)\lambda}{D}>0$. Then $T:=\int_\R dH/Q$ is finite, $t:=\int_{-\infty}^{H}d\varsigma/Q(\varsigma)$ defines
	an increasing diffeomorphism $\R\to(0,T)$, and the functions of $t$
	\begin{equation}
	\label{eq:hf1f2}
	h:=\Big(\frac{(n-1)D}{2ab\big(H^2-Q+(2n-1)\lambda\big)}\Big)^{1/2},
	\qquad
	f_1:=h\sqrt a,\qquad f_2:=h\sqrt b
	\end{equation}
	give a positive smooth solution of \eqref{eq:E1}--\eqref{eq:E4} on $(0,T)$
	with $H\to-\infty$ as $t\to0^+$ and $H\to+\infty$ as $t\to T^-$, whose
	variables \eqref{eq:uvdef} and mean curvature are the given
	$(a,b,u,v)$ and $H$.
\end{proposition}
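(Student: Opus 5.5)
We would reverse the derivation of \eqref{eq:abUV}. Finiteness of $T$ comes first. From \eqref{eq:Qcoord} we have $Q\ge \lambda+H^2/(2n-1)>0$, because the bracket $nu^2+nv^2-2(n-1)uv$ is a positive definite form (its discriminant is $4(n-1)^2-4n^2<0$). Hence $1/Q\le (2n-1)/((2n-1)\lambda+H^2)$, which is integrable on $\R$, so $T<\infty$. The map $H\mapsto t(H)=\int_{-\infty}^H d\varsigma/Q$ is smooth and strictly increasing with $dt/dH=1/Q>0$, and it tends to $0$ and $T$ at $\mp\infty$. It is therefore a diffeomorphism $\R\to(0,T)$, and we regard $a,b,u,v,H$ as functions of $t$, with $dH/dt=Q$ and $d/dt=Q\,d/dH$.

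Next we define $h,f_1,f_2$ by \eqref{eq:hf1f2}. These are smooth and positive by the hypotheses $a,b>0$ and $(H^2-Q+(2n-1)\lambda)/D>0$. By construction $f_1^2/h^2=a$ and $f_2^2/h^2=b$. Moreover $(\log a)'=2u$ and $(\log b)'=2v$ in $t$, so $u=(\log(f_1/h))'$ and $v=(\log(f_2/h))'$, which recovers \eqref{eq:uvdef}. Writing $S:=H^2-Q+(2n-1)\lambda$, the definition of $h$ says exactly that $\mathrm{scal}(g_t)=(n-1)D/(2abh^2)=S$, which is \eqref{eq:constraintH}.

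The key remaining step is to show that the trace $\tilde H:=-(h'/h+(n-1)f_1'/f_1+(n-1)f_2'/f_2)$ of the shape operator \eqref{eq:eigshape} coincides with the parameter $H$. Since $f_i'/f_i=h'/h+u$ (respectively $+v$), we get $\tilde H=-(2n-1)h'/h-(n-1)(u+v)$. So it suffices to compute $h'/h$ from \eqref{eq:hf1f2} by differentiating $\log h^2=\log((n-1)D)-\log(2ab)-\log S$ in $t$. This uses $a'=2au$, $b'=2bv$, the formula for $Q$ and the equations for $u,v$ in \eqref{eq:abUV}, which give $S'=2HQ-Q'$. This is the main obstacle. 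I would first try to express everything through the unknown $\Delta:=\tilde H-H$. The identity $\mathrm{scal}=\tilde H^2-\tr\mathcal S^2+\ldots$ then becomes a linear ODE for $\Delta$ of the form $\Delta'=c(t)\Delta$, where $c$ is built from the Lagrange identity used in \eqref{eq:Qcoord}. From the asymptotics $H\to\pm\infty$ we would conclude $\Delta\equiv0$. If that route fails, I would verify the identity $\tilde H=H$ directly: it is an algebraic consequence of $S'=(2n-1)(\ldots)$, and a symbolic computation shows it reduces to the first integral \eqref{eq:conservation}.

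Once $\tilde H=H$, $Q=\lambda+\tr\mathcal S_t^2$ holds by \eqref{eq:Qcoord}, and $H'=Q$ is the traced Riccati equation, i.e. \eqref{eq:E4} after rewriting $\sum f_i''/f_i$. The $u,v$ equations are the differences $i=1,2$ minus $i=0$ of \eqref{eq:logform}. Together with \eqref{eq:constraintH}, which is \eqref{eq:conservation}, this gives a linear system in the three quantities $E_i:=(\log f_i)''-H(\log f_i)'-\ric_i+\lambda$. Its equations are $E_1-E_0=0$ and $E_2-E_0=0$, plus one more relation: combining \eqref{eq:E4} with \eqref{eq:conservation} as in Remark~\ref{rem:firstintegral} yields $E_0+(n-1)(E_1+E_2)=0$. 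Since $1+2(n-1)\neq0$, all $E_i$ vanish, so \eqref{eq:E1}--\eqref{eq:E3} hold, and \eqref{eq:E4} holds as shown. The limits $H\to\mp\infty$ as $t\to0^+$ and $t\to T^-$ hold by construction.
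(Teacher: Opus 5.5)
Your outer steps are sound: finiteness of $T$ and the diffeomorphism $H\mapsto t$, the recovery of $a,b,u,v$ from \eqref{eq:hf1f2}, and the closing argument with the $E_i$ (a tidier version of the paper's last step). The gap is the step you flag as the main obstacle, $\tilde H=H$. You do not establish it, and neither fallback works.

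The relation $\mathrm{scal}(g_t)=\tilde H^2-\tr\mathcal S_t^2+(2n-2)\lambda$ that you want to turn into an ODE for $\Delta$ is exactly the first integral \eqref{eq:conservation}. That is one of the conclusions, not something available. Indeed, $\mathrm{scal}(g_t)=S$ holds by the definition of $h$, and $\tr\mathcal S_t^2=(\tilde H^2+(n-1)W)/(2n-1)$ with $W:=nu^2+nv^2-2(n-1)uv$ by the Lagrange identity. Together these give $\mathrm{scal}(g_t)-\tilde H^2+\tr\mathcal S_t^2-(2n-2)\lambda=\tfrac{2n-2}{2n-1}(H^2-\tilde H^2)$. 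So \eqref{eq:conservation} is equivalent to $\tilde H^2=H^2$, and ``reducing'' $\tilde H=H$ to the first integral is circular.

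The asymptotic argument has nothing to stand on either. The hypotheses give only a global solution with two sign conditions, so nothing is known about $\tilde H$ as $t\to0^+$ or $t\to T^-$. Even with such information, $\Delta'=c\Delta$ together with $\Delta\to0$ at an endpoint does not force $\Delta\equiv0$ without control on $\int c$.

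The missing idea is that $\tilde H=H$ is a pointwise algebraic identity, needing neither asymptotics nor \eqref{eq:conservation}; this is how the paper argues.
\begin{itemize}
\item By \eqref{eq:Qcoord}, $S=(n-1)(2Q-W)$. Eliminating $Q'$ between this and $S'=2HQ-Q'$ gives $(2n-1)S'=4(n-1)HQ-(n-1)W'$.
\item The $u,v$-equations give $W'-2HW=\tfrac{2S}{(n-1)D}P$ with $P:=nuN_1+nvN_2-(n-1)uN_2-(n-1)vN_1$. Hence $(2n-1)S'/S=2H-2P/D$.
\item Then $\tfrac{d}{dt}\log h^2=D'/D-2u-2v-S'/S$, with $D'=2au\,\partial_aD+2bv\,\partial_bD$.
\item One verifies $(2n-1)D'+2P=2n(u+v)D$ identically in $(a,b,u,v)$. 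Compare the coefficients of $u$; those of $v$ follow by the symmetry $a\leftrightarrow b$, $u\leftrightarrow v$, $N_1\leftrightarrow N_2$.
\end{itemize}
This gives $(2n-1)h'/h=-H-(n-1)(u+v)$, i.e.\ $\tilde H=H$, and the rest of your proof then goes through.
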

\begin{proof}
The idea is to invert the substitution \eqref{eq:hf1f2} and show that the four Einstein equations \eqref{eq:E1}--\eqref{eq:E4} reduce to \eqref{eq:E4}, the two Ricci-difference equations $\eqref{eq:E2}-\eqref{eq:E1}$ and $\eqref{eq:E3}-\eqref{eq:E1}$, and the conservation law \eqref{eq:conservation}. By \eqref{eq:Qcoord} we have $Q\ge\lambda+H^2/(2n-1)>0$, so it is clear that
	$\int_{\mathbb{R}} dH / Q < + \infty$ and $H \mapsto t(H) := \int_{-\infty}^H \frac{d\varsigma}{Q(\varsigma)}$ is an increasing diffeomorphism $\mathbb R  \to (0, T)$. Now, changing variables and writing $a,b,u,v$ as functions of $t$, \eqref{eq:abUV} yields the system:
	\begin{equation}
    \begin{aligned}
    \label{eq:abuvt}
		a' =2au, \qquad u' - H u = \frac{(H^2  - Q + (2n-1)\lambda )N_1}{(n-1)D}  \\
		b' = 2 b v, \qquad v' - H v = \frac{(H^2  - Q + (2n-1)\lambda ) N_2}{(n-1)D} \qquad H' = Q.
    \end{aligned}
	\end{equation}
	 Now, we proceed to take a derivative on $\frac{(n-1)D}{2ab(H^2-Q + (2n-1)\lambda)}$. First notice that, using \eqref{eq:constraintH} and substituting $(2n-1)\lambda$ in the equation below using \eqref{eq:Qcoord}, we get
\begin{align}
\mathrm{scal}(g_t)&= (n-1)(2Q - \underbrace{ (nu^2 + nv^2 - 2(n-1)uv) }_{=:W} ) = (n-1) (2Q - W) \label{eq:scalgt} \\
W' - 2HW& =\frac{2\mathrm{scal}(g_t)}{(n-1)D}\left(n(u-v)(N_1-N_2)+uN_2+vN_1\right), \label{eq:Wprime}
\end{align}
where we have used \eqref{eq:abuvt} to derive  \eqref{eq:Wprime}.
Taking a derivative with respect to $t$ on \eqref{eq:scalgt} and adding $2(n-1)\frac{d}{dt}\mathrm{scal}(g_t)= -(2n-2)Q'+4(n-1)HQ$ (from \eqref{eq:constraintH}), the terms with $Q'$ cancel, and we obtain that
\begin{align*}
(2n-1)\frac{d}{dt}\mathrm{scal}(g_t) &= 4(n-1) H Q - (n-1) W' \\
&= (n-1)[4HQ - 2HW - 2\frac{\mathrm{scal}(g_t)}{(n-1)D}\bigl(n(u-v)(N_1-N_2)+uN_2+vN_1\bigr)]\\
&= 2H\mathrm{scal}(g_t) - 2\frac{\mathrm{scal}(g_t)}{D}\bigl(n(u-v)(N_1-N_2)+uN_2+vN_1\bigr)
\end{align*}
where in the second line we used \eqref{eq:Wprime} and in the third one we used again \eqref{eq:scalgt}. Now, taking the derivative of $\log( \frac{(n-1)D}{2ab \mathrm{scal}(g_t)} )$, we get
\begin{align} \begin{split} \label{eq:preH}
 \frac{d}{dt}&\log\Big(\frac{(n-1)D}{2ab\,\mathrm{scal}(g_t)}\Big) = \frac{D'}{D} - \frac{a'}{a} - \frac{b'}{b} - \frac{\tfrac{d}{dt}\mathrm{scal}(g_t)}{\mathrm{scal}(g_t)} \\
&= \frac{(2n-1)D' + 2nuN_1 + 2nvN_2 - 2(n-1)uN_2 - 2(n-1)vN_1}{(2n-1)D} - 2u -2v - \frac{2}{2n-1} H \\
&= \frac{2n(u+v)D}{(2n-1)D} - 2u - 2v - \frac{2}{2n-1}H = - \frac{(2n-2)}{(2n-1)}(u+v) - \frac{2}{2n-1}H.
\end{split} \end{align}
The computation from the second to the third line is a heavy but straightforward computation that one obtains from $D' = 4(n-1)(au+bv) + 4abv+4abu-4a^2u-4b^2v$ and expanding $N_1, N_2$ according to \eqref{eq:abuvquantities}.
By hypothesis, $\frac{ H^2 - Q + (2n-1)\lambda }{D} > 0$, and this allows us to define $h(t) > 0$ via \eqref{eq:hf1f2}, and therefore we see that \eqref{eq:preH} is simply $\frac{2h'}{h}$. Now, we also define $f_i$ according to \eqref{eq:hf1f2}. Using our ODE for $a'$, we have that $f_1' = h'\sqrt{a} + \frac12 \frac{h a'}{\sqrt{a}} = h'\sqrt{a} + h u \sqrt{a} = \frac{h' f_1}{h} + u f_1. $
Similarly, we obtain that $f_2' = \tfrac{h' f_2}{h} + v f_2$. Therefore, we have that
\begin{align} \begin{split} \label{eq:tuvalu}
	u = \frac{f_1'}{f_1} - \frac{h'}{h}, \quad v = \frac{f_2'}{f_2} - \frac{h'}{h}, \quad a = \frac{f_1^2}{h^2}, \quad b = \frac{f_2^2}{h^2}, \quad H = - \frac{h'}{h} - (n-1)\left( \frac{f_1'}{f_1} + \frac{f_2'}{f_2} \right).
\end{split} \end{align}
where the last formula for $H$ comes from \eqref{eq:preH} after substituting $u$ and $v$ by their formulas.  Notice that this coincides with the definition of our variables in \eqref{eq:uvdef}. Moreover, we get
\begin{align} \begin{split}\label{eq:uvH_prime}
u' &= \frac{f_1''}{f_1}- \frac{h''}{h} - \frac{(f_1')^2}{f_1^2} + \frac{(h')^2}{h^2}, \qquad v' = \frac{f_2''}{f_2}- \frac{h''}{h} - \frac{(f_2')^2}{f_2^2} + \frac{(h')^2}{h^2} \\
H' &= -\frac{h''}{h}-(n-1)\frac{f_1''}{f_1}-(n-1)\frac{f_2''}{f_2}
      +\frac{(h')^2}{h^2}+(n-1)\frac{(f_1')^2}{f_1^2}+(n-1)\frac{(f_2')^2}{f_2^2}
\end{split}\end{align}
Now, we take \eqref{eq:abuvt} and using our definition of $h$, we reexpress the last three equations as
\begin{equation*} \label{eq:abuvt2}
u' - Hu = \frac{N_1}{2abh^2}, \qquad v' - Hv = \frac{N_2}{2abh^2}, \qquad H' = \lambda + \frac{H^2 + (n-1)(nu^2 + nv^2 - 2(n-1)uv)}{2n-1}
\end{equation*}
Finally, using the last three equations together with $2ab h^2(H^2 - Q + (2n-1)\lambda) = (n-1)D$ (from $h$ definition) and substitute $a, b, u, v, u', v', H, H'$ by their corresponding expressions given in \eqref{eq:tuvalu}--\eqref{eq:uvH_prime}. We get
\begin{align*}\label{eq:abuvt3}
\frac{f_1''}{f_1}&-\frac{h''}{h}-\frac{(f_1')^2}{f_1^2}+\frac{(h')^2}{h^2}
  -H\Big(\frac{f_1'}{f_1}-\frac{h'}{h}\Big)
=\frac{n}{2}\Big(\frac{f_1^2}{h^2f_2^2}-\frac{h^2}{f_1^2f_2^2}\Big)
  +\frac{n-2}{2}\frac{f_2^2}{h^2f_1^2}
  +(n-1)\Big(\frac{1}{f_1^2}-\frac{1}{h^2}\Big),\\
\frac{f_2''}{f_2}&-\frac{h''}{h}-\frac{(f_2')^2}{f_2^2}+\frac{(h')^2}{h^2}
  -H\Big(\frac{f_2'}{f_2}-\frac{h'}{h}\Big)
=\frac{n}{2}\Big(\frac{f_2^2}{h^2f_1^2}-\frac{h^2}{f_1^2f_2^2}\Big)
  +\frac{n-2}{2}\frac{f_1^2}{h^2f_2^2}
  +(n-1)\Big(\frac{1}{f_2^2}-\frac{1}{h^2}\Big),\\
\frac{h''}{h}&+(n-1)\frac{f_1''}{f_1}+(n-1)\frac{f_2''}{f_2}=-\lambda, \\
2\frac{h'}{h}&\Big(\frac{f_1'}{f_1}+\frac{f_2'}{f_2}\Big)
  +2(n-1)\frac{f_1'f_2'}{f_1f_2}
  +(n-2)\Big(\frac{(f_1')^2}{f_1^2}+\frac{(f_2')^2}{f_2^2}\Big) \\
 &\qquad =\frac{1}{h^2}+(n-1)\Big(\frac{1}{f_1^2}+\frac{1}{f_2^2}\Big)
   -\frac12\Big(\frac{h^2}{f_1^2f_2^2}+\frac{f_1^2}{h^2f_2^2}+\frac{f_2^2}{h^2f_1^2}\Big)-2\lambda .
\end{align*}
The third equation is exactly \eqref{eq:E4}, the last equation is \eqref{eq:conservation}, the first is $\eqref{eq:E2}-\eqref{eq:E1}$, and the second is $\eqref{eq:E3}-\eqref{eq:E1}$. The four combinations \eqref{eq:E4}, $\eqref{eq:E2}-\eqref{eq:E1}$, $\eqref{eq:E3}-\eqref{eq:E1}$ and \eqref{eq:conservation} are linearly independent, hence equivalent to \eqref{eq:E1}--\eqref{eq:E4} (Remark~\ref{rem:firstintegral}). 
\end{proof}
\begin{remark}
\label{rem:wpeinsteinsol}
Notice that the solutions to the algebraic equation $(N_1,N_2,N_3)=(0,0,0)$, where each $N_i$ with $i\in\{1,2,3\}$ is defined in~\eqref{eq:abuvquantities}, provide exactly the Einstein $\s{SO}_{n+1}$-invariant metrics on a principal orbit $\Sigma_t=\s G/\s H=T_1\R\s{P}^n$. By $N_3=0$, we either have $a+b=n-1$ or $a=b$. In the first case, we can deduce that there are no valid solutions (one obtains
$N_1=N_2=4(n-1)\big(a-\tfrac{n-1}{2}\big)^2+(n-1)^2-n$, which is always positive for $n\ge3$ and only zero for $n = 2$ and $(a, b) \in \{ (1, 0), (0, 1) \}$). However, in the second case $a = b$ there is a unique solution for each $n\ge 2$. Indeed, we deduce from $N_1=N_2=0$ that $a=b=\tfrac{n}{2(n-1)}$. The Einstein  $\s{SO}_{n+1}$-invariant metrics on $ T_1\s{S}^n$, which is the universal cover of $\Sigma_t=T_1\R\s{P}^n$ for $n\ge3$, were classified in \cite[\S4]{Ke98}, and the metric found here coincides with the one obtained there.
Now it is clear that $(a,b,u,v)=\tfrac{n}{2(n-1)}(1,1,0,0)$ is a fixed point of the system given by  \eqref{eq:abUV}. Thus, by Proposition~\ref{prop:equivalence}, it provides a smooth Einstein metric on $(0,T)\times\Sigma_t$. This metric is a warped product metric by Remark~\ref{rem:variablesgeom}, and it does not extend smoothly to $\C\s P^n$. Indeed, by \eqref{eq:uvdef}, we have $1=\tfrac{a}{b}=\tfrac{f_1^2}{f^2_2}$, implying that $f_1=f_2$. However, by \eqref{eq:smoothproj}, this cannot happen at $t=0$ when the metric extends smoothly to $t=0$.
\end{remark}
\begin{remark}\label{rem:quadricvars}
	When working close to the complex quadric orbit, we will instead work with
	the variables
	\begin{equation}
    \label{eq:greekvariables}
		\alpha=\frac{a+b}{2},\qquad
		\beta=\frac{a-b}{2},\qquad
		\mu=\frac{u+v}{2},\qquad
		\eta=\frac{u-v}{2}.
	\end{equation}
	In that case, the system reads
	\begin{equation}\label{eq:abUVquadric}
		\begin{aligned}
			\frac{d\alpha}{dH} &= \frac{2(\alpha\mu+\beta\eta)}{Q}, &\qquad
			\frac{d\mu}{dH} &= \frac{(n-1)DH\mu+\big(H^2-Q+(2n-1)\lambda\big)N_+}{(n-1)DQ},\\[2pt]
			\frac{d\beta}{dH} &= \frac{2(\alpha\eta+\beta\mu)}{Q}, &\qquad
			\frac{d\eta}{dH} &= \frac{(n-1)DH\eta+\big(H^2-Q+(2n-1)\lambda\big)N_-}{(n-1)DQ},
		\end{aligned}
	\end{equation}
	where $N_\pm:=\tfrac12(N_1\pm N_2)$, and $D$, $Q$, $N_\pm$ are expressed in terms of $(\alpha,\beta,\mu,\eta)$ as follows:
	\begin{align}
	D&=  4(n-1)\alpha - 4\beta^2 - 1, &
	Q&=\lambda+\frac{H^2+2(n-1)\mu^2+2(n-1)(2n-1)\,\eta^2}{2n-1}, \nonumber\\
	N_+&=4(n-1)\beta^2+2(n-1)\alpha-n, &
	\qquad
	N_-&=2\beta\,(2\alpha-n+1). \label{eq:NDQquadric}
\end{align}
	Notice that Proposition~\ref{prop:equivalence} applies to this system as well, since we
	can first transform it to
	$(a,b,u,v)=(\alpha+\beta,\,\alpha-\beta,\,\mu+\eta,\,\mu-\eta)$ and
	then apply Proposition~\ref{prop:equivalence}.
\end{remark}

\subsection{The smoothness conditions at $H=\pm\infty$}\label{subsec:smoothH}

The objective of this subsection is to express the smoothness conditions
\eqref{eq:smoothproj} and \eqref{eq:smoothquadric} in the variables of
\S\ref{subsec:rewriting}. Since we will construct the solutions of \eqref{eq:abUV} locally around $H = \pm \infty$ via a series expansion, it is convenient to express the necessary conditions for \eqref{eq:smoothproj} and \eqref{eq:smoothquadric} in terms of the $H$-expansion at those singular orbits.

\begin{proposition}\label{prop:smoothH}
Let $(a,b,u,v)$ be a solution of \eqref{eq:abUV} as in
Proposition~\ref{prop:equivalence}, and let $(h,f_1,f_2)$ be the
corresponding solution of \eqref{eq:E1}--\eqref{eq:E4} on $(0,T)$.
\begin{enumerate}
\item[\textup{(i)}] If, for $-H$ sufficiently large, the solution is given by
convergent series
\begin{equation}\label{eq:seriesRP}
a=\sum_{\substack{k\ge0\\ k\,\mathrm{even}}} a_kH^{-k},\qquad
b=\sum_{\substack{k\ge2\\ k\,\mathrm{even}}} b_kH^{-k},\qquad
u=\sum_{\substack{k\ge1\\ k\,\mathrm{odd}}} u_kH^{-k},\qquad
v=\sum_{\substack{k\ge-1\\ k\,\mathrm{odd}}} v_kH^{-k},
\end{equation}
with $a_0=1$,  $v_{-1}=\tfrac{-1}{n-1}$, and $b_2>0$, then $(h,f_1,f_2)$
satisfies \eqref{eq:smoothproj}.
\item[\textup{(ii)}] If, for $H$ sufficiently large, the functions
$(\alpha,\beta,\mu,\eta)$ of Remark~\ref{rem:quadricvars} are given by convergent series
		\begin{align}\label{eq:seriesquadric}
		\begin{split}
		\alpha=\sum_{\substack{k\ge -2\\ k\,\mathrm{even}}} \alpha_k H^{-k},\qquad
		\beta=\sum_{\substack{k\ge-1\\ k\, \mathrm{odd}}} \beta_k H^{-k},\qquad
		\mu=\sum_{\substack{k\ge-1\\ k\,\mathrm{odd}}} \mu_k H^{-k},\qquad
		\eta=\sum_{\substack{k\ge 0\\ k\,\mathrm{even}}} \eta_k H^{-k},
		\end{split}
		\end{align}
		with $\mu_{-1}=1$, $\alpha_{-2}>0$, and $(n-1)\alpha_{-2}\neq\beta_{-1}^2$, then $(h,f_1,f_2)$ satisfies
		\eqref{eq:smoothquadric}.
	\end{enumerate}
\end{proposition}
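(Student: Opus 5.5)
The plan is to turn the $H$-expansions into expansions in the geodesic parameter $t$ (respectively $s=T-t$), using $t(H)=\int_{-\infty}^H d\varsigma/Q(\varsigma)$ from Proposition~\ref{prop:equivalence}. The parity of each series will then translate into even/odd behaviour in $t$ or $s$. Finally I recover $h,f_1,f_2$ from \eqref{eq:hf1f2} and read off \eqref{eq:smoothproj} and \eqref{eq:smoothquadric}. The basic observation is that analytic functions of $t$ with definite parity automatically satisfy the vanishing of all odd (resp.\ even) derivatives at $0$. So each smoothness condition reduces to a parity statement plus a check of the leading coefficients.

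For (i), I first note that $u,v$ are odd in $H^{-1}$, so $u^2,v^2,uv$ are even. Hence by \eqref{eq:Qcoord}, $Q=H^2q(H^{-2})$ with $q$ analytic near $0$. Its leading term is $q(0)=\frac{1}{2n-1}\bigl(1+(n-1)n\,v_{-1}^2\bigr)=\frac{1}{n-1}$. Integrating $H^{-2}q(H^{-2})^{-1}$ from $-\infty$ gives $t=H^{-1}\tau(H^{-2})$, an odd analytic function of $H^{-1}$ with $\tau(0)=-(n-1)$. By the analytic inverse function theorem, $H^{-1}$ is an odd analytic function of $t$ with $H^{-1}=-t/(n-1)+O(t^3)$. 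It follows that $a$ and $b$ are even analytic in $t$, with $a(0)=1$ and $b=b_2t^2/(n-1)^2+O(t^4)$, while $u$ and $v$ are odd.

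Next I compute the leading behaviour of the pieces of \eqref{eq:hf1f2}. We have $D=2n-4+O(t^2)$, and $H^2-Q+(2n-1)\lambda=H^2\frac{n-2}{n-1}+O(1)$, which is even in $t^{-1}$. Therefore
\[
h^2=\frac{(n-1)D}{2ab\,(H^2-Q+(2n-1)\lambda)}
\]
is an even analytic function of $t$ with $h(0)^2=(n-1)^2/b_2>0$. I take $h=\xi:=(n-1)/\sqrt{b_2}$ at $t=0$ and note that $h$ is even. Then $f_1=h\sqrt a$ is even with $f_1(0)=\xi$. Also $f_2=h\sqrt b$ is odd, because $\sqrt b=t\cdot(\text{even, positive})$, and $f_2'(0)=\xi\sqrt{b_2}/(n-1)=1$. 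This gives \eqref{eq:smoothproj}. For $n=2$ both $D$ and $n-2$ vanish to leading order, so the ratio must be expanded one order further. I would flag this case and check it separately, or note that only $n\ge3$ is needed.

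For (ii) the same scheme applies with $s=T-t=\int_H^\infty d\varsigma/Q$. Since $\mu$ is odd and $\eta$ is even in $H^{-1}$, \eqref{eq:NDQquadric} gives $Q=H^2(1+O(H^{-2}))$, even. Hence $s=H^{-1}(1+O(H^{-2}))$ is odd, and so $H^{-1}$ is an odd analytic function of $s$. It follows that $\alpha=\alpha_{-2}s^{-2}+(\text{even})$ and $\beta=\beta_{-1}s^{-1}+(\text{odd})$, with $\mu$ odd and $\eta$ even in $s$. Moreover $D=4\bigl((n-1)\alpha_{-2}-\beta_{-1}^2\bigr)s^{-2}+\dots$ is nonvanishing to leading order by hypothesis, and $ab=\alpha^2-\beta^2\sim\alpha_{-2}^2s^{-4}$. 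The quantity $H^2-Q+(2n-1)\lambda$ is an even analytic function of $s$ whose constant term $c$ involves $\mu_1$ and $\eta_0$. Thus $h^2=s^2\cdot(\text{even analytic})$, so $h$ is odd in $s$. Also $f_{1,2}^2=h^2(\alpha\pm\beta)$, where $h^2\alpha$ is even and $h^2\beta$ is odd. Taking square roots shows that $f_2(T-s)=f_1(T+s)$ formally, i.e.\ $f_2^{(k)}(T)=(-1)^kf_1^{(k)}(T)$, with $f_1(T)^2=\zeta_0^2$ given by the leading coefficient of $h^2\alpha$, which is positive.

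The main obstacle is the normalization $h'(T)=-2$. Because $h(T)=0$, this is a genuine constraint on the leading coefficient of $h^2/s^2$, namely
\[
\frac{2(n-1)\bigl((n-1)\alpha_{-2}-\beta_{-1}^2\bigr)}{\alpha_{-2}^2\,c}=4,
\]
rather than a parity statement. I would obtain it by writing out the lowest orders of the recurrence that \eqref{eq:abUVquadric} imposes on the coefficients. Inserting the series into the $\mu$- and $\eta$-equations at order $H^{1}$ and $H^{0}$ should express $\mu_1$, $\eta_0$, and hence $c$, in terms of $\alpha_{-2}$ and $\beta_{-1}$. I would then verify the displayed identity, using $\mu_{-1}=1$, which is what encodes the $\ell_+=1$ blow-up $H\sim 1/(T-t)$ of Lemma~\ref{lem:focusing}. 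The sign of $h'(T)$ is automatic, since $h>0$ for $s>0$. Should the direct verification be messy, a fallback is to check the identity on the Fubini--Study solution and argue that the relevant coefficient is determined by the indicial equation independently of the free parameters.
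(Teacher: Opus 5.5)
Your proposal is correct and follows essentially the same route as the paper: analytic inversion of $t(H)$ (resp.\ $T-t$) in the variable $\mp 1/H$, transfer of the parities to $t$ or $s$, reading off the leading coefficients from \eqref{eq:hf1f2}, and deducing $h'(T)^2=4$ from the low-order coefficients \eqref{eq:firstcoef-quad}. For that last step, take $\eta_0=-\beta_{-1}/(2\alpha_{-2})$ from the leading term of the $\beta$-equation, since the $\mu$- and $\eta$-equations alone also admit the spurious root $\eta_0=0$, $c=0$; the $\mu$-equation then gives $\mu_1=\tfrac{2n-1}{2}\lambda-\tfrac{(2n-1)(n-1)}{8\alpha_{-2}}$, hence $c=\tfrac{(n-1)D_{-2}}{8\alpha_{-2}^2}$ with $D_{-2}=4\big((n-1)\alpha_{-2}-\beta_{-1}^2\big)$, and your identity holds, although this determination of $\mu_1$ requires $\beta_{-1}\neq0$ (for $\beta_{-1}=0$ that order is vacuous and $\mu_1$, hence the cone angle, is not forced, a degeneracy shared by the paper's own proof and harmless in its application where $\beta_{-1}<0$).
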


\begin{proof}
We start by setting $\rho = -1/H$, so that \eqref{eq:seriesRP} can be written as 
\begin{equation}
\label{eq:2seriesrpn}
        a=1+\rho^2 \bar a(\rho^2),\qquad
        b=\rho^2 \bar b(\rho^2),\qquad
        u=\rho \bar u(\rho^2),\qquad
        v=\frac{1}{(n-1)\rho}+\rho \bar v(\rho^2),
\end{equation}
where $\bar a,\bar b,\bar u,\bar v$ are analytic in a neighbourhood of $0$, and $\bar b(0)=b_2>0$. 
Similarly, from the definition of $Q$ (see~\eqref{eq:Qcoord}), we get
\begin{equation*}
        Q(-1/\rho)=\rho^{-2}\bar Q(\rho^2),
        \qquad \bar Q (0)=\frac1{n-1}.
\end{equation*}
In particular $\bar Q$ is analytic and nonzero for $\rho$ small. Therefore,
\begin{equation*}
        t(\rho) =\int_{-\infty}^{-1/\rho}\frac{dH}{Q(H)}
        =\int_0^\rho \frac{d\sigma}{\bar Q(\sigma^2)}
        =\rho \bar t(\rho^2),
\end{equation*}
where $\bar t$ is analytic and $\bar t(0)=n-1$. Hence, by the analytic
inverse function theorem, we can invert as $\rho = t \bar \rho(t^2)$ for some analytic function $\bar \rho$ near $0$, with $\bar \rho (0) = 1/ (n-1)$. Now, composing this with \eqref{eq:2seriesrpn}, we obtain that $a, b$ are even analytic at $t = 0$, while $u, v - \frac{1}{t}, H + \frac{n-1}{t}$ are odd analytic at $t = 0$, with 
\begin{align*}
	a(t)&=1+t^2 \widehat a(t^2),& 
	b(t)&=\frac{b_2}{(n-1)^2}t^2+t^4 \widehat b(t^2),\\
	u(t)&=t\widehat u(t^2),&
	v(t)&=\frac{1}{t}+t\widehat v(t^2),&
	H(t)&=-\frac{n-1}{t}+t\widehat H(t^2),
\end{align*}
for some analytic functions $\widehat a,\widehat b,\widehat u,\widehat v,\widehat H$.
Using formulas \eqref{eq:hf1f2}, we obtain that $h(t)^2, f_1(t)^2, f_2(t)^2$ are all even analytic, with $h(0) = f_1(0) = \frac{n-1}{\sqrt{b_2}} > 0$ and $f_2(0) = 0$. Also, taking two derivatives on \eqref{eq:hf1f2}, we have $2 f_2'(0)^2 = h(0)^2 \frac{2 b_2}{(n-1)^2}$, so that $f_2'(0) = 1$. In particular, we immediately obtain all conditions from \eqref{eq:smoothproj} with $\eta, \psi, \phi$ analytic in a neighbourhood of zero, by taking $\phi(t^2) = \tfrac{1}{2}\big(h(t)^2 + f_1(t)^2\big)$, with $\psi(t^2) = \tfrac{1}{2t^2}\big(h(t)^2 - f_1(t)^2\big)$ and $\eta(t^2) = \frac{f_2(t)^2-t^2}{t^4}$. This completes the proof of \textup{(i)}.

The proof of \textup{(ii)} is completely analogous to the proof of \textup{(i)}, so we just describe the main steps. In this case, we set $\rho = 1/H$. Following steps analogous to those in \textup{(i)}, we have that
\begin{align}
\alpha &= \alpha_{-2} \rho^{-2} + \bar \alpha (\rho^2), \quad \beta = \beta_{-1} \rho^{-1} + \rho \bar \beta (\rho^2), \quad \mu = \mu_{-1} \rho^{-1} + \rho \bar \mu (\rho^2), \quad \eta = \bar \eta (\rho^2) \label{eq:guanajuato} \\[1ex]
Q(1/\rho) &= \rho^{-2}\bar Q(\rho^2), \hspace{0.12cm}\qquad \bar Q(0)=1, \notag \\
T - t(\rho) &= T - \int_{-\infty}^{1/\rho} \frac{dH}{Q(H)} = \int_{1/\rho}^{\infty} \frac{dH}{Q(H)}
= \int_0^\rho \frac{d\sigma}{\bar Q(\sigma^2)}
= \rho \bar t (\rho^2), \notag 
\end{align}
with $\bar \alpha, \bar \beta, \bar \mu, \bar \eta, \bar Q$ analytic around $\rho = 0$. We take $s = T - t$ and use the analytic inverse function theorem to obtain $\rho = s \bar \rho (s^2)$, with $\bar \rho$ analytic around $s = 0$. Since $D$ is a linear combination of $1$, $\alpha$ and $\beta^2$ it admits an even series expansion in $s$. The same holds for $ab = \alpha^2 - \beta^2$. Therefore, from \eqref{eq:hf1f2}, we obtain that $h(T-s)^2$ is even analytic at $s = 0$. Expressing $h(t)^2$ from \eqref{eq:hf1f2} in the variables of Remark \ref{rem:quadricvars}, we obtain $h(t)^2 = \frac{4(n-1)\alpha - 4\beta^2 - 1}{4(\alpha^2-\beta^2)(\lambda - \eta^2 + (H^2-\mu^2)/(2n-1))}$, with leading order term $\frac{(n-1)\alpha_{-2} - \beta_{-1}^2}{\alpha_{-2}^2 (\lambda - \eta_0^2 - 2 \mu_1 / (2n-1))}H^{-2}$. As a consequence $h(T) = 0$. Evaluating \eqref{eq:abUVquadric} also allows us to obtain $\eta_0$ and $\mu_1$ (a precise computation will be done in \eqref{eq:firstcoef-quad}), concluding also that $h'(T)^2 = 4$. Therefore, $h(T-s)^2 = 4s^2 + s^4 \eta (s^2) > 0$ for some analytic $\eta$, which is the first condition in \eqref{eq:smoothquadric}. Now, using \eqref{eq:hf1f2}, 
\begin{equation} \label{eq:helsinki}
f_i(T-s)^2 = h(T-s)^2 (\alpha (T-s) + \epsilon_i \beta (T-s)) = (4 + s^2 \eta(s^2)) (s^2 \alpha (T-s) + \epsilon_i s^2 \beta (T-s)),
\end{equation}
with $\epsilon_i = 1, -1$ for $i = 1, 2$, respectively. Combining \eqref{eq:guanajuato} with $\rho = s \bar \rho (s^2)$ we see that $s^2 \alpha (T-s) = \phi (s^2)$ and $s^2 \beta (T-s) = s \psi (s^2)$, with $\phi, \psi$ analytic in a neighbourhood of zero. Plugging that in~\eqref{eq:helsinki}, we conclude the proof of \textup{(ii)}.\qedhere

\end{proof}

\section{The $H$-expansions at the singular orbits}\label{sec:Hexpansions}

The purpose of this section is to construct a series such as the one in Proposition \ref{prop:smoothH}, where moreover $(a, b, u, v)$ (resp.\ $(\alpha, \beta, \mu, \eta)$) satisfy the ODE \eqref{eq:abUV} (resp.\ \eqref{eq:abUVquadric}). We will see that we can construct such converging series (and therefore a local solution) by fixing just two parameters in each case: $(b_2, u_1)$ when close to $\R\s P^n$ ($H\to -\infty$) and $(\alpha_{-2},\beta_{-1})$ when close to the complex quadric ($H \to +\infty$). This is the content of Lemma~\ref{lem:recRPn} and Lemma~\ref{lem:recQn-1}.

Moreover, when proving convergence of the series it will be useful to obtain explicit bounds on the series coefficients, see Lemma~\ref{lem:inductive_RPn} and Lemma~\ref{lem:inductive_CQ}. In order to do this we will make use of Catalan numbers. The $k$-th Catalan number is defined by $\mathrm{Cat}(k):=\tfrac1{k+1}\binom{2k}{k}$ (see \cite{S15} for more information on Catalan numbers). They will be useful to propagate inductive bounds on the Taylor coefficients of our ODEs (see~\cite[Prop. 2.3]{BCG25} or \cite{CGSS26_2} for the use of such technique in a different context). That will allow us to prove absolute convergence on an explicit interval $(-\infty, H_-]$ (resp.~$[H_+,+\infty)$), with an explicit tail bound, as well as analytic dependence on the parameters $(b_2, u_1)$ (resp.~$(\alpha_{-2},\beta_{-1})$). This is the content of Proposition~\ref{prop:RPseries} and Proposition~\ref{prop:QCseries}.

For simplicity and to allow an easier comparison with the Fubini-Study metric, from now on we will normalize our metric taking $\lambda=2(n+1)$. 

\subsection{The $H$-expansion around $\R\s P^n$}\label{subsec:expRP}

By Proposition~\ref{prop:smoothH}\,(i), we look for expansions of the type \eqref{eq:seriesRP}. We derive explicit recurrences for $a_k, b_k, u_k, v_k$, depending just on the two free parameters $(b_2, u_1)$. Clearing the denominators in \eqref{eq:abUV} and setting $\lambda=2(n+1)$, the system becomes the set of polynomial identities
\begin{equation}\label{eq:rp-ode}
	\begin{aligned}
		&Q\frac{da}{dH}-2au=0,&(n-1)DQ\,\frac{du}{dH}-(n-1)DHu-N_1\left(H^2-Q+2(2n- 1)(n+1)\right)=0, \\
		&Q\frac{db}{dH}-2bv=0,
		&(n- 1)DQ\,\frac{dv}{dH}-(n- 1)DHv-N_2\left(H^2-Q+2(2n- 1)(n+1)\right)=0,
	\end{aligned}
\end{equation}
with $N_1$, $N_2$, $D$ as in \eqref{eq:abuvquantities} and $Q$ as in \eqref{eq:Qcoord}. First, we give the formulas for the series coefficients of $N_1$, $N_2$, $D$ and $Q$, writing $(\cdot)_k$ for the coefficient of $H^{-k}$ in each of them. We start by noting that, by the parities in \eqref{eq:seriesRP}, all of them are series in $H^{-k}$ with $k \geq 0$ even, except for $Q$, in which case $k$ is also even but a $k = -2$ term appears. For $k \geq 2$, we have
\begin{align} \begin{split} \label{eq:NDQk}
    Q_k &=\frac{n-1}{2n-1}\Big[2 v_{-1}\left(n v_{k+1} - (n-1)u_{k+1}\right)+  \sum_{\substack{i+j=k\\ 1\le i\le k-1\\ 1\le j\le k-1}} n u_iu_j-2(n-1)u_i v_j + n v_i v_j \Big], \\
    D_k &= 2a_k(-a_0+(n-1))+ 2b_k(a_0+(n-1))+  \sum_{\substack{i+j=k\\ 2\le i\le k-2\\ 2\le j\le k-2}}  2a_ib_j -  a_ia_j -   b_ib_j, \\
    N_{1,k}&= 2na_k  + \sum_{\substack{i+j=k\\ 2\le i\le k-2\\ 2\le j\le k-2}}  n a_ia_j + (n- 2) b_ib_j - 2(n-1) a_i b_j,\qquad N_{1,k}^{\rm des} :=N_{1,k}-2n a_k, \\
    N_{2,k} &=2 a_k \left( (n-2)a_0 + n-1\right)  - 2(n-1)b_k + \sum_{\substack{i+j=k\\ 2\le i\le k-2\\ 2\le j\le k-2}} (n-2) a_i a_j + n b_i b_j -2(n-1)a_i b_j 
\end{split} \end{align}
where the definition of $N_{1, k}^{\rm des}$ is simply $N_{1,k}$ with its top-order term $a_k$ removed. Notice that we have moved out of the convolution-type sums the terms with $a_0$ and $v_{-1}$; this will be convenient later.  The coefficients with $k = 0$ (and $Q_{-2}$) are
\begin{equation}
\label{eq:first_coefs_NDQ}
\begin{aligned} 
	Q_{-2}&=\tfrac1{n-1},&  Q_0&=2(n+1)-\tfrac{2nv_1-2(n-1)u_1}{2n-1}, & D_0&=2(n-2),\\
    N_{1,0}&=0,&  N_{2,0}&=2(n-2).
\end{aligned}
\end{equation}
By~\eqref{eq:NDQk} and \eqref{eq:first_coefs_NDQ} we can compute $N_{1,k},N_{2,k}$ and $D_k$ knowing the coefficients $a_j,b_j$ for $j\le k$, and $Q_k$ knowing $u_j,v_j$ for $j\le k+1$. In order to compute $N_{1, k}^{\rm des}$ we only need $a_j, b_j$ for $j \leq k-2$.

Now recall from Proposition~\ref{prop:smoothH}~(i) that we impose $v_{-1} = -\frac{1}{n-1}$, $a_0 = 1$ to have smoothness at the singular orbit $\R\s P^n$. The coefficients $b_2, u_1$ are our free inputs determining the series. The coefficients $a_2$ and $v_1$ can be obtained by looking at the $H^{-1}$ term in the $\frac{da}{dH}$ equation in \eqref{eq:rp-ode} and at the $H^{0}$ term in the $\frac{dv}{dH}$ equation in \eqref{eq:rp-ode}. We obtain:
\begin{equation}\label{eq:a2u1}
a_2 = -\frac{a_0 u_1}{Q_{-2}}  = -(n-1) u_1, \qquad v_1 = \frac{2n-1}{n(n-1)}\,b_2+\frac{n-1}{n}\,u_1-\frac{2(n+1)(2n-1)}{n}.
\end{equation}

For the higher order coefficients, we define our recurrence iteratively. Assume we know the coefficients $a_j,b_j,u_j,v_j$ for $j<k$, with $k\geq 3$ odd, and we show how to compute $a_{k+1},b_{k+1},u_k,v_k$. In particular we then know $N_{i,j},D_j$ for all $j\le k-1$ and $Q_j$ for all $j\le k-3$. Taking the coefficient of $H^{-k}$ in the $db/dH$-equation in \eqref{eq:rp-ode} isolates $b_{k+1}$,
\begin{equation}\label{eq:rec-b}
	\tfrac{k-1}{n-1}\,b_{k+1}= R_{b, k}:= -2\sum_{\substack{i+j=k\\ 2\le i\le k-1\\ 1\le j\le k-2}} b_iv_j- \sum_{\substack{i+j=k-1\\ 0\le i\le k-3\\ 2\le j\le k-1}} j\,Q_i\,b_j.
\end{equation}
We notice that this completely determines $b_{k+1}$ in terms of known quantities in the right-hand side. We are also using $k \geq 3$ so that $k-1 > 0$ (in contrast, the case $k = 1$ gives a vacuous equation for $b_2$, and that is why $b_2$ is a free parameter).  Next, the $da/dH$-equation at order $k$ and the $du/dH$-identity at order $k-1$ give a $2\times2$ system for $(a_{k+1},u_k)$. Namely, we have the linear system given by $M_k (a_{k+1}\enspace u_k)^\top =  (R_{a, k}\enspace R_{u, k})^\top$, where:
\begin{align}
	M_k&=\begin{pmatrix}-\dfrac{k+1}{n-1} & -2\\[6pt] -\dfrac{2n(n-2)}{n-1} & -2(n-2)(k+n- 1)\end{pmatrix},
	\enspace
	M_k^{-1}=\frac{\begin{pmatrix}-(n-1)(k+n-1) & \frac{n-1}{n-2}\\[4pt] n & -\frac{k+1}{2(n-2)}\end{pmatrix}}{(k-1)(k+n+1)},\nonumber\\[2ex]
 R_{a, k}&:= 2 \sum_{\substack{i+j=k\\ 2\le i\le k-1\\ 1\le j\le k-2}} a_iu_j+  \sum_{\substack{i+j=k-1\\ 0\le i\le k-3\\ 2\le j\le k-1}}  j Q_i a_j, \nonumber\\[2ex]
    \frac{R_{u, k}}{n-1} &:=  \tfrac{n-2}{(n-1)^2}N_{1,k+1}^{\rm des} + \frac{2(2n- 1)(n+1)}{n-1} N_{1,k-1}-\frac{1}{n-1}  \sum_{\substack{i+j=k-1\\ 0\le i\le k-3 \\ 
        2 \leq j \leq k-1}}  Q_i\,N_{1,j}\label{eq:rec-aU}\\ &+ Q_{-2}  \sum_{\substack{i+j=k\\ 1\le j\le k-2 \\ 2 \leq i \leq k-1}}  j D_i u_{j}  + \sum_{\substack{i+j+l=k-2\\ 2\le i\le k-3, 0\le l\le k-3\\ 1\le j\le k-2}}  j  D_i\,Q_l u_{j} + D_0  \sum_{\substack{j+l=k-2\\ 0\le l\le k-3 \\ 1\le j\le k-2}}  j  Q_l u_{j}
        + \sum_{\substack{i+j=k\\ 2\le i\le k-1 \\ 1 \leq j \leq k-2}}  D_i u_{j}\nonumber. 
\end{align}
In particular, we see that for $k \geq 3$ and $n \geq 3$, the system \eqref{eq:rec-aU} allows us to solve for $a_{k+1}$ and $u_k$. This also allows us to compute $N_{1,k+1},N_{2,k+1}$ and $D_{k+1}$ via \eqref{eq:NDQk}.

\begin{remark}
\label{rem:nge3}
Notice that the behaviour of the system $M_k (a_{k+1}\enspace u_k)^\top =  (R_{a, k}\enspace R_{u, k})^\top$ is very different for $n=2$ and for $n\ge 3$, as for $n=2$, $\det(M_k)=0$, and for $n\ge 3$, we have $\det(M_k)\neq0$. 
\end{remark}

Finally, the $dv/dH$-identity of \eqref{eq:rp-ode} at order $k-1$ allows us to compute $v_k$. More precisely, we have the equation  $v_k=\frac{R_{v,k}}{2(n-2)(k+n- 1)}$ where
\begin{equation}\label{eq:rec-v}
	\begin{aligned}
		 R_{v, k}&:= -(n- 1) \Big[ D_{k+1} v_{-1} + \sum_{\substack{i+j=k\\ 2\le i\le k-1 \\ 1\leq j \leq k-2 }}  D_i  v_{j}\Big] -2(2n- 1)(n+1) N_{2,k-1}  -N_{2,k+1} \\
		& + \Big[ Q_{-2}N_{2,k+1} + \sum_{\substack{i+j=k-1\\ 0\le i\le k-3 \\ 2\le j\le k-1}}  Q_i\,N_{2,j} \Big] + (n- 1) \Big[ D_{k+1} Q_{-2} v_{-1} + v_{-1}  \sum_{\substack{i+j=k-1\\ 2\le i\le k-1, \\ 0\le j\le k-3}}  D_i\,Q_j\\
        & - Q_{-2} \sum_{\substack{i+l=k\\ 2\le i \le k-1 \\ 1 \leq l \leq k-2}}  l D_i v_{l} - D_0 \sum_{\substack{j+l=k-2\\ 0\le j\le k-3 \\ 1\le l\le k-2}}  l Q_j v_{l} - \sum_{\substack{i+j+l=k-2\\ 2\le i\le k-3,\,0\le j\le k-3\\ 1\le l\le k-2}}  l D_i Q_j v_{l}  \Big].
	\end{aligned}
\end{equation}
To sum up, the discussion above yields the following:

\begin{lemma}
\label{lem:recRPn} 
Fix an integer $n\ge3$ and $(b_2,u_1)\in\C^2$, with $\mathrm{Re}(b_2) > 0$. Let $(a_k,b_k,u_k,v_k)_k$ be the coefficients generated from \eqref{eq:a2u1}, \eqref{eq:rec-b}--\eqref{eq:rec-v}, with $N_{1, k}, N_{2, k}, D_k$, and $Q_k$ given by \eqref{eq:first_coefs_NDQ}, \eqref{eq:NDQk}.  

We consider $a, b, u, v$ as formal power series given from \eqref{eq:seriesRP} and $N_i, D, Q$ given by the formal power series $\sum x_k H^{-k}$ for $x\in \{ N_i, D, Q \}$. Then, the ODE \eqref{eq:rp-ode} and the definitions \eqref{eq:abuvquantities} and \eqref{eq:Qcoord} are satisfied as equalities of formal power series.
\end{lemma}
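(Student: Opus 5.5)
The approach is a coefficient-by-coefficient verification: the recurrences were built by extracting, order by order, the coefficient of each power of $H^{-1}$ in the four polynomial identities \eqref{eq:rp-ode}. Working in the ring of formal Laurent series in $H^{-1}$ with finitely many negative-index terms, I will substitute the ansätze \eqref{eq:seriesRP} into the left-hand sides of \eqref{eq:rp-ode}, expand, and show that every coefficient vanishes. First I will record that \eqref{eq:NDQk}--\eqref{eq:first_coefs_NDQ} are precisely the Cauchy-product expansions of $N_1,N_2,D$ from \eqref{eq:abuvquantities} and of $Q$ from \eqref{eq:Qcoord}. The terms with $a_0=1$ and $v_{-1}$ are simply pulled out of the convolutions, and the parity pattern (even for $a,b$, odd for $u,v$) makes $N_i,D,Q$ even series, with $Q$ carrying the single extra term $Q_{-2}=v_{-1}^2 n(n-1)/(2n-1)\cdot\frac{1}{?}$, which normalizes to $1/(n-1)$. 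So the definitions hold as formal identities by construction, and it remains to check the four ODE identities.

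\textbf{Step 1 (parities).} Each of the four left-hand sides of \eqref{eq:rp-ode} has a definite parity in $H$: the $a$- and $b$-equations are odd series, and the $u$- and $v$-equations are even series, after accounting for the factor $H$ and $d/dH$ shifting parity. Hence only coefficients of one parity must be checked in each equation.

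\textbf{Step 2 (low orders).} I will check the lowest orders directly.
\begin{itemize}
\item The leading coefficients vanish because $v_{-1}=-1/(n-1)$ and $Q_{-2}=1/(n-1)$. For instance, the $H^{1}$ coefficient of the $b$-equation is $-2Q_{-2}b_2+\dots$, which cancels against $2b_2 v_{-1}$ only in combination; this is the vacuous $k=1$ case that leaves $b_2$ free.
\item Similarly, $N_{1,0}=0$ makes the top-order $u$-equation vanish, which is what leaves $u_1$ free.
\item The next orders produce exactly \eqref{eq:a2u1} for $a_2$ and $v_1$, using $D_0=2(n-2)$ and $N_{2,0}=2(n-2)$.
\end{itemize}

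\textbf{Step 3 (induction on odd $k\ge3$).} I will show that the coefficient of $H^{-k}$ in the $b$-equation, of $H^{-k}$ in the $a$-equation, and of the corresponding order $k-1$ in the $u$- and $v$-equations equal, respectively:
\begin{itemize}
\item $\tfrac{k-1}{n-1}b_{k+1}-R_{b,k}$;
\item the first row of $M_k(a_{k+1},u_k)^\top-(R_{a,k},R_{u,k})^\top$;
\item the second row of that system;
\item $2(n-2)(k+n-1)v_k-R_{v,k}$.
\end{itemize}
This is done by isolating the terms containing the unknown top-index coefficients. These come from $Q_{-2}\cdot\frac{d}{dH}$, from $v_{-1}$ and $a_0$, and, inside $D_{k+1}$, $N_{1,k+1}$, $N_{2,k+1}$ and $Q_{k-1}$, from the entries $a_{k+1}$, $b_{k+1}$, $u_k$, $v_k$. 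Everything else is a convolution of lower coefficients, which matches the displayed sums.

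The ordering must be respected: $b_{k+1}$ is computed first, then $(a_{k+1},u_k)$ using $b_{k+1}$ through $D_{k+1}$, then $v_k$ using all of them. This works provided the coefficient of $v_k$ does not re-enter the earlier equations except through known terms. That holds because $v_k$ appears in $Q_{k-1}$, which enters the $u$-equation only at order $k+1$.

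Since $n\ge3$ and $k\ge3$, $\det M_k=(k-1)(k+n+1)\cdot\frac{2(n-2)}{n-1}\neq0$ and $2(n-2)(k+n-1)\neq0$, so the coefficients are well defined and, by construction, make these expressions vanish. The hypothesis $\mathrm{Re}(b_2)>0$ is irrelevant at the formal level and only ensures $b_2\neq0$ where needed.

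The main obstacle is the bookkeeping in Step 3 for the $u$- and $v$-equations. The triple products $DQ\,du/dH$ and $DHu$ generate many index ranges, and one must verify that every term involving an undetermined coefficient has been extracted correctly into $M_k$ or into the left-hand side of \eqref{eq:rec-v}. In particular, one must confirm that the $u_k$ contribution from $Q_{k-1}$ (through $v_{-1}u_k$) combines with $Q_{-2}$ and $D_0$ to give the entry $-2(n-2)(k+n-1)$. I would handle this by writing the $H^{-(k-1)}$ coefficient of $(n-1)DQ\,u'$ as a full triple convolution, then peeling off the boundary indices $i\in\{0,k+1\}$, $l\in\{-2,k-1\}$, $j\in\{k\}$ one at a time and comparing term by term with \eqref{eq:rec-aU}.
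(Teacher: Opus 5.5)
Your strategy is the paper's. The lemma is stated there as a summary of the preceding derivation, in which each recurrence is the coefficient of a fixed power of $H^{-1}$ in \eqref{eq:rp-ode}, solved for the top-index unknowns. Your overall structure (parities, low orders, a triangular solve at each odd $k\ge3$) is therefore right. However, several of the concrete mechanisms you name are wrong, and they sit exactly where you locate the main obstacle:
\begin{itemize}
\item $Q_{-2}=\frac{1+n(n-1)v_{-1}^2}{2n-1}=\frac{1}{n-1}$; your expression drops the $H^2$ in the numerator of \eqref{eq:Qcoord}. Also, the vacuous coefficient of the $b$-equation is at $H^{-1}$, not $H^{1}$, and equals $-2b_2(Q_{-2}+v_{-1})=0$.
\item $N_{1,0}=0$ only kills the $H^{2}$ coefficient of the $u$-equation. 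What leaves $u_1$ free is the next order. The $H^{0}$ coefficient of the $u$-equation is $-2n(n-2)\bigl(u_1+\tfrac{a_2}{n-1}\bigr)$, which is $n(n-2)$ times the $H^{-1}$ coefficient of the $a$-equation: $M_1$ is singular and the right-hand side vanishes. Since \eqref{eq:a2u1} fixes $a_2$ from the $a$-equation alone, you must check this compatibility explicitly.
\item $D_{k+1}$ does not enter the $u$-equation at order $k-1$. Because $u$ has no $H^{1}$ term, $DHu$ and $DQ\,\tfrac{du}{dH}$ only reach $D_{k-1}$. Moreover, $b_{k+1}$ cancels out of $N_{1,k+1}$ because $a_0=1$. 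So $(a_{k+1},u_k)$ do not depend on $b_{k+1}$ at all; $D_{k+1}$ and $N_{2,k+1}$ enter only the $v$-equation.
\item $Q_{k-1}$ contains $u_k$ and $v_k$ through $2v_{-1}(nv_k-(n-1)u_k)$, but it contributes nothing to the entry $-2(n-2)(k+n-1)$. In the $u$-equation at order $k-1$ it appears only as $N_{1,0}Q_{k-1}=0$. The entry is just $-(k+n-1)D_0$, coming from $(n-1)D_0Q_{-2}(-k)u_k-(n-1)D_0u_k$. In the $v$-equation at order $k-1$, $Q_{k-1}$ appears as $\bigl((n-1)D_0v_{-1}+N_{2,0}\bigr)Q_{k-1}$. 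This vanishes because $D_0=N_{2,0}=2(n-2)$ and $(n-1)v_{-1}=-1$.
\end{itemize}
This last cancellation is the one non-mechanical point in the verification, and your proposal does not identify it. It is why the coefficient of $v_k$ is exactly $2(n-2)(k+n-1)$, and why $R_{v,k}$ in \eqref{eq:rec-v} involves only $Q_j$ with $j\le k-3$.

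Likewise, $v_k$ stays out of the $u$-equation at order $k-1$ because $N_{1,0}=0$, not by index counting as you claim. Your stated expectation that $Q_{k-1}$ "combines with $Q_{-2}$ and $D_0$" would misdirect the check. With these corrections, your coefficient-by-coefficient argument goes through and coincides with the paper's.
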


\begin{remark} While the series \eqref{eq:seriesRP} only make sense for real coefficients, we will consider their complex extension by allowing the coefficients to be complex. The reason for that is that at some point in the next section we will be interested in bounding the first order derivatives of \eqref{eq:seriesRP} with respect to the free parameters $(b_2, u_1)$. For that, it will be useful to use the Cauchy bound for holomorphic functions, and for that we need to allow $(b_2, u_1)$ to be complex. Since all our bounds on the series will be by propagating absolute values through triangle inequality, this distinction is not really relevant for the proof.
\end{remark}

 \subsection{Catalan estimates and convergence of the $H$-expansion at $\R\s P^n$} 
 \label{subsec:catRPn}
We start this subsection by proving Lemma~\ref{lem:inductive_RPn}. That Lemma will propagate inductive bounds on the coefficients of the $H$-expansions of $a,b,u,v$.

In order to do that, we will estimates based on Catalan numbers. Recall that $\mathrm{Cat}(k):=\tfrac1{k+1}\binom{2k}{k}$ is the $k$-th Catalan number. Two properties make Catalan numbers especially useful for bounding series coefficients in our $H$-expansions: the convolution identity
\[
\mathrm{Cat}(k+1) = \sum_{i+j = k,\ i,j\geq 0} \mathrm{Cat}(i)\, \mathrm{Cat}(j),
\]
and the exponential bound $\mathrm{Cat}(k) \leq 4^k$. Since our recursions produce sums of the type $\sum_{i+j = k-1} x_i y_{j}$, these two facts will provide exponential bounds on the coefficients. We state our main estimate in an abstract way here, so that it can be applied to $a_{m+1}, b_{m+1}, u_m,$ and $v_m$.  Let $x,y,z$ be sequences of complex numbers satisfying:
	\[
	|x_p|\le C_x\,\mathrm{Cat}(p+\sigma_x)\,C_{\mathrm{rad}}^{\,p+\sigma_x},\qquad
	|y_p|\le C_y\,\mathrm{Cat}(p+\sigma_y)\,C_{\mathrm{rad}}^{\,p+\sigma_y},\qquad
	|z_p|\le C_z\,\mathrm{Cat}(p+\sigma_z)\,C_{\mathrm{rad}}^{\,p+\sigma_z},
	\]
    for every $0 \le p \le m$, where the shifts satisfy $\sigma_x, \sigma_y, \sigma_z \geq -p$. Then, we have 
	\begin{equation}\label{eq:catconv}
		\begin{gathered}
			\Big|\sum_{\substack{ i+j=m \\ i\geq 0, \;\; i\geq -\sigma_x \\ j \geq 0, \;\; j \geq - \sigma_y}} x_iy_j\Big|\le C_xC_y\,\mathrm{Cat}(m + \sigma +1)\,C_{\mathrm{rad}}^{m + \sigma},\\
			\Big|\sum_{\substack { i+j+l=m \\ i,j,l \geq 0 \;\; i \geq -\sigma_x \\ j\geq -\sigma_y, \;\; l \geq -\sigma_z }}x_i y_j z_l\Big|\le C_xC_yC_z\,\mathrm{Cat}(m+\sigma +2)\,C_{\mathrm{rad}}^{ m + \sigma},
		\end{gathered}
	\end{equation}
	with $\sigma:=\sigma_x+\sigma_y$ in the first case and $\sigma : = \sigma_x + \sigma_y +\sigma_z$ in the second one. We will apply \eqref{eq:catconv} with shift $\sigma_a = \sigma_b = -1$ for the sequences $a_k, b_k$ (by \eqref{eq:catbounds}) and also $\sigma_D = \sigma_N = -1$ for $D_k, N_{1,k}, N_{2,k}$, as we will see in \eqref{eq:auxC-def}. We also set $\sigma_u = \sigma_v =0$ for $u_k, v_k$, and finally we set $\sigma_Q = 1$ for $Q_k$. 
\begin{lemma}\label{lem:inductive_RPn}
	Fix an integer $n\ge3$, let $(b_2,u_1)\in\C^2$ with $\mathrm{Re}(b_2)>0$, and consider $(a_k,b_k,u_k,v_k)_k$ the coefficients of the $H$-expansions as in Lemma~\ref{lem:recRPn}. Let $C_a, C_b, C_u, C_v>0$ and $C_{\mathrm{rad}}>1$, and define the auxiliary constants $C_D, C_{N_1}, C_{N_2}, C_Q$ according to \eqref{eq:auxC-def} below. Assume that the closure inequalities \eqref{eq:close-b}, \eqref{eq:close-au}, and \eqref{eq:close-v} hold. If, for some odd $m\ge3$, the bounds
	\begin{equation}\label{eq:catbounds}
		\begin{aligned}
			|a_{k+1}|&\le C_a\,\mathrm{Cat}(k)\,C_{\mathrm{rad}}^{\,k}, &
			|b_{k+1}|&\le C_b\,\mathrm{Cat}(k)\,C_{\mathrm{rad}}^{\,k},\\
			|u_k|&\le C_u\,\mathrm{Cat}(k)\,C_{\mathrm{rad}}^{\,k}, &
			|v_k|&\le C_v\,\mathrm{Cat}(k)\,C_{\mathrm{rad}}^{\,k}
		\end{aligned}
	\end{equation}
	hold for every odd $k$ with $1\le k\le m-2$, then they also hold at the next odd level $k=m$.
\end{lemma}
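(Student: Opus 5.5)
The argument is a strong induction on the odd level $m$, following the order in which the recurrences of Lemma~\ref{lem:recRPn} compute the unknowns: first $b_{m+1}$ from \eqref{eq:rec-b}, then $(a_{m+1},u_m)$ from \eqref{eq:rec-aU}, and finally $v_m$ from \eqref{eq:rec-v}. Each step bounds the right-hand side $R_{\ast,m}$ by a Catalan-type quantity using only coefficients already under control, then divides by the explicit linear factor on the left.

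\textbf{Auxiliary sequences.} From the hypothesis at odd levels $k\le m-2$, together with \eqref{eq:NDQk} and \eqref{eq:catconv}, I obtain bounds of the form
\[|D_k|\le C_D\,\mathrm{Cat}(k-1)\,C_{\mathrm{rad}}^{k-1},\qquad |N_{i,k}|\le C_{N_i}\,\mathrm{Cat}(k-1)\,C_{\mathrm{rad}}^{k-1},\qquad |Q_k|\le C_Q\,\mathrm{Cat}(k+1)\,C_{\mathrm{rad}}^{k+1}.\]
The $D$ and $N$ bounds hold for even $2\le k\le m-1$; the $Q$ bound holds for even $k\le m-3$. The linear terms (those containing $a_0$, $v_{-1}$, $a_k$, $b_k$) are bounded directly. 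The quadratic convolutions are bounded by \eqref{eq:catconv} with shifts $\sigma_a=\sigma_b=-1$ and $\sigma_u=\sigma_v=0$, after which one uses $\mathrm{Cat}(k-1)\le\mathrm{Cat}(k)$-type monotonicity and $C_{\mathrm{rad}}>1$ to absorb lower powers. The low-order coefficients $Q_{-2},Q_0,D_0,N_{1,0},N_{2,0}$ from \eqref{eq:first_coefs_NDQ} and $a_2,v_1$ from \eqref{eq:a2u1} enter as explicit constants. I take these auxiliary constants to be exactly those of \eqref{eq:auxC-def}.

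\textbf{Bounding $b_{m+1}$ and $(a_{m+1},u_m)$.} In \eqref{eq:rec-b} both sums only involve indices already controlled. Applying \eqref{eq:catconv} gives $|R_{b,m}|\le K_b\,\mathrm{Cat}(m)\,C_{\mathrm{rad}}^{m}$, with $K_b$ linear in $C_bC_v$ and $C_QC_b$. The factor $j$ in the $Q_ib_j$ sum is bounded by $m$ and compensated by $(m-1)/(n-1)$ on the left; the ratio $m/(m-1)$ is at most $3/2$ for $m\ge3$. The closure inequality \eqref{eq:close-b} should state precisely that the resulting constant is $\le C_b$. For $(a_{m+1},u_m)$ I bound $R_{a,m}$ in the same way and bound $R_{u,m}$ term by term: single convolutions by the first line of \eqref{eq:catconv} and the triple sum $D_iQ_lu_j$ by the second line. $N^{\rm des}_{1,m+1}$ is admissible because it only uses $a_j,b_j$ with $j\le m-1$. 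I then apply $M_m^{-1}$ explicitly. Its entries are $O(m^2)/(m-1)(m+n+1)$ or $O(1)/((m-1)(m+n+1))$, and these decay enough to cancel the factor $j\le m$ from derivative terms, since every $R$ containing $j$ is paired with an entry of $M_m^{-1}$ of size $O(1/m)$. This gives \eqref{eq:close-au}.

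\textbf{Bounding $v_m$, and the main obstacle.} The same scheme applied to \eqref{eq:rec-v} gives the bound for $v_m$. Here $D_{m+1}$ and $N_{2,m+1}$ appear, but they are computable, and boundable, from the already-established $a_{m+1}$ and $b_{m+1}$ together with earlier coefficients. Dividing by $2(n-2)(m+n-1)$ kills the factor $l\le m$ and yields \eqref{eq:close-v}.

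I expect the main obstacle to be the Catalan index bookkeeping: each convolution must land on $\mathrm{Cat}(m)\,C_{\mathrm{rad}}^m$ rather than on a larger Catalan index. With the chosen shifts, sums such as $\sum Q_iN_{1,j}$ with $i+j=m-1$ produce $\mathrm{Cat}(m-1+1+1)=\mathrm{Cat}(m+1)$, which exceeds $\mathrm{Cat}(m)$ by a factor of roughly $4$. The triple sums are worse. I would handle this with $\mathrm{Cat}(m+1)\le 4\,\mathrm{Cat}(m)$ and $\mathrm{Cat}(m+2)\le 16\,\mathrm{Cat}(m)$, and recover the resulting factors either from the $1/m$ or $1/m^2$ decay of the inverse matrices or by placing them in the closure inequalities. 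The latter absorbs them through a sufficiently large $C_{\mathrm{rad}}$, since the lower-order terms carry a spare factor of $C_{\mathrm{rad}}^{-1}$ or $C_{\mathrm{rad}}^{-2}$.
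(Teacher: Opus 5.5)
Your proposal is the paper's proof. It uses the same order ($b_{m+1}$ from \eqref{eq:rec-b}, then $(a_{m+1},u_m)$ through $M_m^{-1}$, then $v_m$ from \eqref{eq:rec-v}) and the same auxiliary bounds with shifts $\sigma_D=\sigma_{N_i}=-1$, $\sigma_Q=+1$, $\sigma_u=\sigma_v=0$. It applies \eqref{eq:catconv} in the same way. It also makes the same observations about $N^{\mathrm{des}}_{1,m+1}$, and about $D_{m+1},N_{2,m+1}$ being controlled once $a_{m+1},b_{m+1}$ are.

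What you should correct is your ``main obstacle'', which comes from a miscount.
\begin{itemize}
\item With your own shifts, $\sum_{i+j=m-1}Q_iN_{1,j}$ has $\sigma=\sigma_Q+\sigma_{N_1}=0$. So \eqref{eq:catconv} gives $\mathrm{Cat}(m-1+0+1)=\mathrm{Cat}(m)$, not $\mathrm{Cat}(m+1)$.
\item The triple sums $\sum_{i+j+l=m-2}jD_iQ_lu_j$ (and $lD_iQ_jv_l$) have $\sigma=0$ and give $\mathrm{Cat}(m-2+0+2)=\mathrm{Cat}(m)$.
\item $D_0\sum_{j+l=m-2}jQ_lu_j$ has $\sigma=1$ and again gives $\mathrm{Cat}(m)$.
\end{itemize}
Every term of $R_{b,m},R_{a,m},R_{u,m},R_{v,m}$ is therefore bounded by a multiple of $\mathrm{Cat}(m)$; that is exactly what the shifts were chosen to achieve. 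No $\mathrm{Cat}(m+1)\le 4\,\mathrm{Cat}(m)$ or $\mathrm{Cat}(m+2)\le 16\,\mathrm{Cat}(m)$ step is needed.

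Two smaller slips follow the same pattern. In $R_{b,m}$ the weight satisfies $j\le m-1$ (since $i+j=m-1$), not $j\le m$, so $\tfrac{n-1}{m-1}$ cancels it exactly and there is no factor $3/2$. The adjugate of $M_m$ has entries of size $O(m)$ and $O(1)$, not $O(m^2)$, which is what makes all entries of $M_m^{-1}$ of size $O(1/m)$.

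Your spurious factors $4$, $16$, $3/2$ would not break an induction, because they only hit terms carrying a spare $C_{\mathrm{rad}}^{-1}$. However, they would prove the step only under closure inequalities strictly stronger than \eqref{eq:close-b}, \eqref{eq:close-au}, \eqref{eq:close-v}. The exact bookkeeping is what yields, for instance, precisely $(n-1)(C_v+C_Q)\le C_{\mathrm{rad}}$ for $b_{m+1}$.
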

\begin{proof}
 We structure the proof as follows. First, we obtain bounds for the coefficients in the $H$-expansions of $D,N_1,N_2$ and $Q$.  Second, we obtain bounds for the coefficients in the $H$-expansions of $b$, then of $a$ and $u$, and finally of $v$.

Now, we go back to \eqref{eq:NDQk} and use our hypotheses \eqref{eq:catbounds} on $a_k$, $b_k$, $u_k$, $v_k$ to obtain bounds on $D_k, N_{1, k}$, $N_{2, k}$, $Q_k$. Notice that the $\sum$ terms from \eqref{eq:NDQk} involve only coefficients with indices $i,j \geq 2$ (resp.\ $i,j\geq1$ for $Q_k$), meaning we can apply \eqref{eq:catbounds} to bound the inner terms and then use \eqref{eq:catconv} directly (this is the reason why we separated the terms involving $a_0$ and $v_{-1}$ on \eqref{eq:NDQk}). For example, in the case of $D_k$, we obtain
    \begin{align*}
    |D_k| &\leq 2(n-1)(|a_k| + |b_k|) + 2|b_k| + 2|a_k| + \sum_{\substack{ i + j = k \\ i,j \geq 2 \\ i, j \; \text{even}}} (2|a_ib_j| + |a_ia_j| + |b_ib_j|)  \\
    &\leq 2n (C_a + C_b) C_{\mathrm{rad}}^{k-1} \mathrm{Cat}(k-1) + \sum_{\substack{ i + j = k \\ i,j \geq 2 \\ i, j \; \text{even}}} (2 C_aC_b + C_a^2 + C_b^2)\mathrm{Cat}(i-1) \mathrm{Cat}(j-1) C_{\mathrm{rad}}^{i+j-2}\\
    &\leq \underbrace{\left( 2n(C_a + C_b) + \frac{2C_aC_b + C_a^2 + C_b^2}{C_{\rm{rad}}}\right)}_{:= C_D} C_{\mathrm{rad}}^{k-1} \mathrm{Cat}(k-1),
    \end{align*}
    which holds for any $2 \leq k \leq m-1$. Defining 
    \begin{equation}\label{eq:auxC-def}
		\begin{aligned}
			C_D&:=2n(C_a+C_b)+\tfrac1{C_{\mathrm{rad}}}(C_a+C_b)^2,\\
			C_{N_1}&:=2nC_a+\tfrac1{C_{\mathrm{rad}}}\big(nC_a^2+(n- 2)C_b^2+2(n- 1)C_aC_b\big),\\
			C_{N_2}&:=(4n- 6)C_a+2(n- 1)C_b+\tfrac1{C_{\mathrm{rad}}}\big((n- 2)C_a^2+nC_b^2+2(n- 1)C_aC_b\big),\\
			C_Q&:=\tfrac{2(n-1)C_u+2nC_v}{2n-1}+\frac{1}{(2n-1)C_{\mathrm{rad}}} ( n(n-1) (C_u^2+C_v^2) + 2(n-1)^2 C_u C_v ) + \frac{2(n+1)}{C_{\mathrm{rad}}}, \\
            C_{N_{1}^{\mathrm{des}}} &:= n C_a^2 + (n-2)C_b^2 + 2(n-1)C_aC_b,
		\end{aligned}
	\end{equation}
    and applying the same procedure, we obtain that 
    \begin{equation} \label{eq:catbounds_NDQ}
    |x_k| \leq C_x \mathrm{Cat}(k + \sigma_x) C_{\mathrm{rad}}^{k + \sigma_x}, \qquad \mbox{ for } x\in \{D, N_1, N_2, Q \},
    \end{equation}
    where $\sigma_D = \sigma_{N_1} = \sigma_{N_2} = -1$ and $\sigma_Q = +1$. Bound \eqref{eq:catbounds_NDQ} holds for any $2 \leq k \leq m-1$ if $x\in \{ D, N_1, N_2 \}$ and for any $0 \leq k \leq m-3$ if $x = Q$. For $N_{1, k}^{\mathrm{des}}$, we obtain 
    \begin{equation*} \label{eq:catbounds_N1des}
    |N_{1, k}^{\mathrm{des}}| \leq C_{N_1^{\mathrm{des}}} \mathrm{Cat}(k-1) C_{\mathrm{rad}}^{k-2}, \qquad \mbox{ for } 2 \leq k \leq m+1.
    \end{equation*}
    Let us stress that since $N_{1, k}^{\mathrm{des}}$ does not involve $b_{k+1}$ or $a_{k+1}$ our bound holds up to $k = m+1$.

We can directly bound $b_{m+1}$ by going to its equation \eqref{eq:rec-b}, and combining the hypotheses in~\eqref{eq:catbounds} with the Catalan estimate \eqref{eq:catconv}, as we did previously. Concretely, recalling $R_{b, k}$ from \eqref{eq:rec-b}, we have
	\begin{align*} \label{eq:palermo}
	|R_{b, m}| \leq 2 C_b C_v C_{\mathrm{rad}}^{m-1} \mathrm{Cat}(m) + (m-1) C_Q C_b C_{\mathrm{rad}}^{m-1} \mathrm{Cat}(m).
	\end{align*}
    Now, since $b_{m+1} = \frac{n-1}{m-1} R_{b, m}$ due to \eqref{eq:rec-b}, we can conclude the desired bound \eqref{eq:catbounds} for $b_{m+1}$ provided that 
	\begin{equation}\label{eq:close-b}
    (n-1)(C_v + C_Q) \leq C_{\mathrm{rad}}.
	\end{equation}   
	
The strategy to get bounds on $a_{m+1}$ and $u_m$ is analogous to that of $b_{m+1}$, with the difference that since we use equations \eqref{eq:rec-aU}, we need to bound $R_{a, m}$ and $R_{u, m}$ simultaneously. Analogously to the case of $b_{m+1}$, we obtain
    \begin{equation*} \label{eq:calabria}
    |R_{a, m}| \leq C_a (2C_u + (m-1)C_Q) C_{\mathrm{rad}}^{m-1} \mathrm{Cat}(m).
    \end{equation*}
    
    For $|R_{u, m}|$, using $Q_{-2} = \frac{1}{n-1}$ and using bounds \eqref{eq:catbounds}--\eqref{eq:catconv} in \eqref{eq:rec-aU}, we obtain
    \begin{align} \begin{split} \label{eq:bari}
    | R_{u, m} | &\leq 
    (m-2)\left( C_D C_u + (n-1) \frac{C_D C_Q C_u}{C_{\mathrm{rad}}} + 2(n-1)(n-2) C_Q C_u\right)\mathrm{Cat}(m) C_{\rm{rad}}^{m-1} \\
    &\quad + \Big( (n-1)C_D C_u +  \frac{n-2}{n-1}C_{N_1^{\mathrm{des}}} + 2(2n-1)(n+1) \frac{C_{N_1} }{C_{\mathrm{rad}}} + C_Q C_{N_1} \Big) \mathrm{Cat}(m) C_{\rm{rad}}^{m-1}
     \\
    &\leq 
    \mathrm{Cat}(m) C_{\mathrm{rad}}^{m-1} (m-1) \Big[ C_u
    \left( \frac{n+1}{2}C_D + (n-1)\frac{C_DC_Q}{C_{\mathrm{rad}}} + 2(n-1)(n-2)C_Q \right) \\
    &\quad + \frac{n-2}{2(n-1)}C_{N_1^{\mathrm{des}}} + \frac{2(2n-1)(n+1) C_{N_1}}{2C_{\mathrm{rad}}}  + \frac12 C_Q C_{N_1}\Big] = \mathrm{Cat}(m) C_{\mathrm{rad}}^{m-1} (m-1) C_{R_u},
    \end{split} \end{align}
    where in the last line we define $C_{R_u}$ to be the term inside brackets, which just depends on the other $C_x$ and $n$. We also used the bounds $1\leq \frac{m-1}{2}$ and $m-2 \leq m-1$ to obtain a factor $m-1$ outside. The two top entries of $M_{m}^{-1}$ (from \eqref{eq:rec-aU}) are bounded in absolute value by $\frac{n-1}{m-1}$ and $\frac{1}{m-1} \frac{n-1}{(n-2)(n+4)}$. The two bottom entries of $M_m^{-1}$ are bounded in absolute value by $\frac{1}{(m-1)} \cdot \frac{n}{n+3}$ and $\frac{1}{2(m-1)(n-2)}$. Thus,
    \begin{align} \label{eq:trentino_alto}
    |a_{m+1}| &\leq \left( (n-1)(C_u + C_Q)C_a + C_{R_u} \frac{n-1}{(n-2)(n+4)}\right)  C_{\mathrm{rad}}^{m-1} \mathrm{Cat}(m), \\
    |u_m| &\leq \left( \frac{n}{n+3} C_a(C_u + C_Q) + \frac{1}{2(n-2)} C_{R_u} \right)  C_{\mathrm{rad}}^{m-1} \mathrm{Cat}(m).
    \end{align}
    
    Therefore, we can conclude the desired bounds for $|a_{m+1}|$ and $|u_m|$ provided that
\begin{align}
&(C_u + C_Q)C_a +  \frac{C_{R_u}}{(n-2)(n+4)} \leq \frac{C_a C_{\mathrm{rad}}}{n-1}, \enspace \frac{n}{n+3} C_a(C_u + C_Q) + \frac{C_{R_u}}{2(n-2)}  \leq C_u C_{\mathrm{rad}}, \enspace \mbox{ where } \nonumber\\
&\resizebox{0.92\linewidth}{!}{$\displaystyle
C_{R_u}= C_u\left( \tfrac{n+1}{2}C_D + (n-1)C_Q \left( \tfrac{C_D}{C_{\mathrm{rad}}} + 2(n-2) \right) \right) + \tfrac{(n-2)C_{N_1^{\mathrm{des}}}}{2(n-1)} + C_{N_1} \left( \tfrac{(2n-1)(n+1) }{C_{\mathrm{rad}}} + \tfrac{C_Q}{2} \right)
$}. \label{eq:close-au}
\end{align}

	Finally, we obtain the bounds for $v_m$. We start working on \eqref{eq:rec-v}, recalling that $v_{-1} = \frac{-1}{n-1}$, $Q_{-2} = \frac{1}{n-1}$ and $D_0 = 2(n-2)$ from \eqref{eq:first_coefs_NDQ}. Then, using the bounds \eqref{eq:catbounds}, and also \eqref{eq:catconv} to treat any sum, we have that
    \begin{align*}
    |R_{v, m}| &\leq \mathrm{Cat}(m) C_{\mathrm{rad}}^m \Big( \frac{n}{n-1}C_D  + \frac{2(2n-1)(n+1) C_{N_2}}{C_{\mathrm{rad}}^2} + \frac{n-2}{n-1}C_{N_2}  + \frac{(n-1)(m-2) C_D C_Q C_v }{C_{\mathrm{rad}}^2}  \\
    & \qquad + \frac{C_Q C_{N_2} + C_DC_Q + (m+n-3) C_D C_v + 2(n-1)(n-2) (m-2) C_Q C_v}{C_{\mathrm{rad}}}  \Big)
    \end{align*}
    where the first term corresponds to the two $D_{m+1}$ terms, the second to $N_{2, k-1}$ and the third one to the two terms with $N_{2, k+1}$ in \eqref{eq:rec-v}. The fourth and fifth terms correspond to the triple sum and all double sums combined, respectively. Therefore, using $m+n-1 \geq 5$, and $v_{m} = \frac{1}{2(n-2)(m+n-1)}R_{v, m}$, we can conclude the bound $|v_m| \leq C_v \mathrm{Cat}(m)C_{\mathrm{rad}}^m $ as long as 
    \begin{align} \begin{split} \label{eq:close-v}
\frac{1}{2(n-2)}\Big( \frac{n C_D}{5(n-1)} &+ \frac{2(2n-1)(n+1) C_{N_2}}{5C_{\mathrm{rad}}^2} + (n-1) \frac{C_D C_Q C_v}{C_{\mathrm{rad}}^2}  + \frac{ C_Q C_{N_2} + C_D C_Q}{5C_{\mathrm{rad}}}   \\
& + \frac{C_DC_v + 2(n-1)(n-2)C_QC_v}{C_{\mathrm{rad}}} \Big) + \frac{C_{N_2}}{10(n-1)} \leq C_v,
    \end{split} \end{align}
concluding the proof of the lemma.
\end{proof}

We can now state the main result of this subsection, which will be derived from Lemma \ref{lem:inductive_RPn} by induction. 

\begin{proposition}\label{prop:RPseries}
	Fix $n\ge 3$. Consider $(b_2, u_1) \in \mathbb{R}_+ \times \mathbb R$ and let $(a_k,b_k,u_k,v_k)_k$ be the coefficients of the $H$-expansions as in Lemma~\ref{lem:recRPn}. Then, the series given in \eqref{eq:seriesRP} converge uniformly on $(-\infty, H_-]$ for a certain $H_-<0$, and the resulting $(a, b, u, v)$ give a smooth solution to \eqref{eq:abUV}.

    Moreover, we have the following explicit quantitative estimate. Fix $r > 0$ and $m \in \mathbb{Z}$ with $m \geq 3$ odd and assume a choice of constants $C_a, C_b, C_u, C_v > 0$, and $C_{\mathrm{rad}} > 1$ such that the hypotheses of Lemma \ref{lem:inductive_RPn} are satisfied for any initial coefficients $(b_2', u_1') \in B_r(b_2) \times B_r(u_1) \subset \mathbb C^2$. 
    
    Then, for $x \in \{ a, b, u, v \}$ we have that 
    \begin{align} \begin{split} \label{eq:taylor_residual_rp}
    \left| x(H) - \sum_{k \leq m} x_k H^{-k} \right| &\leq \frac{4^{m+1} C_x C_{\mathrm{rad}}^{m+1}}{|H|^{m} (|H| - 4 C_{\mathrm{rad}})}  \\
    \left| \partial_y x(H) - \sum_{k \leq m} (\partial_y x_k) H^{-k} \right| &\leq \frac{4^{m+1} C_x C_{\mathrm{rad}}^{m+1}}{r |H|^{m} (|H| - 4 C_{\mathrm{rad}})}  \quad \mbox{ for } y\in \{ b_2, u_1 \}
    \end{split} \end{align}
    for any $H < -4C_{\mathrm{rad}}$. In particular, we see that we can take any $H_- < -4 C_{\mathrm{rad}}$ in the first part of the statement.
		
\end{proposition}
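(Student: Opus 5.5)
The plan is to obtain coefficient bounds from Lemma~\ref{lem:inductive_RPn} by induction, turn them into convergence and tail estimates, and then show the sums solve the ODE; the derivative bound comes from Cauchy's estimate. We first run the induction over odd $m'\ge 3$. The base case consists of the coefficients with $k=1$, namely $a_2,b_2,u_1,v_1$, which are given explicitly by \eqref{eq:a2u1}, together with whatever low-order coefficients are needed to start the recursion. We assume, as part of the hypothesis on the constants (the choice the paper presumably checks numerically), that these satisfy \eqref{eq:catbounds} for every $(b_2',u_1')\in B_r(b_2)\times B_r(u_1)$. Lemma~\ref{lem:inductive_RPn} then gives
\[
|a_{k+1}|,|b_{k+1}|\le C_{a},C_b\,\mathrm{Cat}(k)C_{\mathrm{rad}}^k,\qquad |u_k|,|v_k|\le C_u,C_v\,\mathrm{Cat}(k)C_{\mathrm{rad}}^k
\]
for all odd $k\ge1$, uniformly on the polydisc. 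Since $\mathrm{Cat}(k)\le 4^k$, each coefficient is bounded by $C_x(4C_{\mathrm{rad}})^{k+1}$, up to a harmless index shift.

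For the tail estimate, we sum the geometric series for $|H|>4C_{\mathrm{rad}}$. For instance,
\[
\sum_{k>m}|u_k||H|^{-k}\le C_u\sum_{k\ge m+1}\frac{(4C_{\mathrm{rad}})^k}{|H|^k}=\frac{C_u(4C_{\mathrm{rad}})^{m+1}}{|H|^{m}(|H|-4C_{\mathrm{rad}})}.
\]
The same computation works for $a$ and $b$ (using $a_{k+1}$ indexed by $k$) and for $v$ (where the $v_{-1}$ term is included in the partial sum). The only care needed is to match the exponents to the claimed form $4^{m+1}C_{\mathrm{rad}}^{m+1}/(|H|^m(|H|-4C_{\mathrm{rad}}))$; any slack in the $H$-power can be absorbed because $|H|>4C_{\mathrm{rad}}>1$. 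This gives uniform convergence on $(-\infty,H_-]$ for any $H_-<-4C_{\mathrm{rad}}$.

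Next we show the sums are genuine solutions. By Lemma~\ref{lem:recRPn} the formal series satisfy \eqref{eq:rp-ode} identically. Absolutely convergent power series in $H^{-1}$ can be multiplied and differentiated termwise inside the disc of convergence, so the formal identities become pointwise identities, and $(a,b,u,v)$ solves \eqref{eq:rp-ode}. To divide by $Q$ and $(n-1)D$ and recover \eqref{eq:abUV}, we note that $Q\sim H^2/(n-1)$ and $D\to 2(n-2)\neq0$ as $H\to-\infty$. These quantities are therefore nonzero for $|H|$ large, which yields the qualitative part of the statement (possibly after shrinking $H_-$).

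For the parameter derivatives, each coefficient $x_k$ is a polynomial in $(b_2',u_1')$ by the recursions, since the only divisions are by the nonzero constants $\det M_k$ and $k-1$. Hence $x_k$ is holomorphic, and each truncated tail is holomorphic on the polydisc, bounded uniformly by the first estimate. Cauchy's estimate on the disc of radius $r$ in one variable gives the extra factor $1/r$, and uniform convergence justifies exchanging $\partial_y$ with the sum. The main obstacle is the base case: verifying that the explicit low-order coefficients obey \eqref{eq:catbounds} uniformly on the complex polydisc with the same constants used in the closure inequalities. I expect this to reduce to a finite check, done by interval arithmetic, which the statement effectively folds into its hypothesis.
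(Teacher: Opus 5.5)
Your quantitative part is sound and follows the paper's own proof. That covers the strong induction via Lemma~\ref{lem:inductive_RPn}, the geometric tail sum using $\mathrm{Cat}(k)\le 4^k$, passing from formal to pointwise identities via Lemma~\ref{lem:recRPn}, and the polynomial dependence on $(b_2',u_1')$ combined with Cauchy's estimate. Your remark about $D\neq 0$ when dividing out is a reasonable extra precaution.

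The gap is in the qualitative first sentence of the proposition. There $(b_2,u_1)\in\mathbb{R}_+\times\mathbb{R}$ is arbitrary and no constants are given. So you must \emph{prove} that for every such pair there exist $r>0$, $C_a,C_b,C_u,C_v>0$ and $C_{\mathrm{rad}}>1$ satisfying the $k=1$ bounds and the closure inequalities \eqref{eq:close-b}, \eqref{eq:close-au}, \eqref{eq:close-v} on $B_r(b_2)\times B_r(u_1)$. You instead fold this into the hypothesis and defer it to an interval-arithmetic check. A computer can only certify specific parameter values, not all of $\mathbb{R}_+\times\mathbb{R}$. The nonvanishing of $Q$ and $D$, which you say yields the qualitative part, says nothing about convergence of the series. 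The base case is also not the real obstacle. It concerns four linear polynomials $a_2,b_2',u_1',v_1$ on a compact polydisc, and the required bounds $|a_2|\le C_aC_{\mathrm{rad}}$, etc., only get easier as $C_{\mathrm{rad}}$ grows.

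The actual difficulty is that you cannot simply send $C_{\mathrm{rad}}\to\infty$. In \eqref{eq:close-v} the terms $\frac{nC_D}{10(n-1)(n-2)}$ and $\frac{C_{N_2}}{10(n-1)}$ do not decay. By \eqref{eq:auxC-def}, $C_D\to 2n(C_a+C_b)$ and $C_{N_2}\to (4n-6)C_a+2(n-1)C_b$. The paper resolves this with an ordering of constants:
\begin{enumerate}
\item[(1)] Take $r=b_2/2$, so that $\mathrm{Re}(b_2')>0$ on the polydisc.
\item[(2)] Choose $C_a,C_b,C_u,C_v$ dominating the $k=1$ coefficients there.
\item[(3)] Enlarge $C_v$ so that $\frac{nC_D+(n-2)C_{N_2}}{10(n-1)(n-2)}\le \frac12 C_v$. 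This is possible because, for $C_{\mathrm{rad}}>1$, $C_D$ and $C_{N_2}$ are bounded in terms of $n,C_a,C_b$ alone and do not involve $C_v$.
\item[(4)] Take $C_{\mathrm{rad}}$ large in terms of all the other constants. Then \eqref{eq:close-b} and \eqref{eq:close-au} hold, since $C_{\mathrm{rad}}$ appears on their right-hand sides while $C_Q$ and $C_{R_u}$ stay bounded. The remaining terms of \eqref{eq:close-v} all carry a factor $C_{\mathrm{rad}}^{-1}$ or $C_{\mathrm{rad}}^{-2}$, so they also become admissible.
\end{enumerate}
With this existence argument at $m=3$, your quantitative argument then delivers the qualitative statement.
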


\begin{proof}
We start by proving the quantitative estimate, which will end up implying the qualitative part of the proposition. For any fixed $(b_2', u_1')\in B_r(b_2) \times B_r(u_1)$ we construct the series given by Lemma~\ref{lem:recRPn}. By Lemma \ref{lem:inductive_RPn} we can propagate by strong induction the bounds \eqref{eq:catbounds} so we obtain that \eqref{eq:catbounds} holds also for any $k$ odd, with constants $C_a, C_b, C_u, C_v, C_{\mathrm{rad}}$ independent of $(b_2', u_1')\in B_r(b_2)\times B_r(u_1)$. Using that $\mathrm{Cat}(k) \leq 4^k$, and bounding we obtain for each for $x\in\{a,b,u,v\}$,
\begin{equation*}
\sum_{k\geq m+1} C_x C_{\mathrm{rad}}^k |H|^{-k}\mathrm{Cat}(k) \leq C_x \sum_{k = m+1}^\infty \left( \frac{ 4 C_{\mathrm{rad}} }{|H|} \right)^k   = \frac{4^{m+1} C_x C_{\mathrm{rad}}^{m+1}}{|H|^{m+1}} \frac{1}{1 - 4 C_{\mathrm{rad}} / |H|} 
\end{equation*}
we directly obtain our first bound in \eqref{eq:taylor_residual_rp} and that the series \eqref{eq:seriesRP} are absolutely convergent on $(-\infty, H_-]$ for any $H_- < -4C_{\mathrm{rad}}$. In particular, all the left-hand sides in \eqref{eq:rp-ode} can also be expanded in convergent series on $(-\infty, H_-]$. Since we found $(a_k, b_k, u_k, v_k)$ via \eqref{eq:rec-b}--\eqref{eq:rec-v} by precisely imposing that those coefficients are zero, we obtain that all the left-hand sides of \eqref{eq:rp-ode} are zero, meaning that $(a, b, u, v)$ solve \eqref{eq:abUV}.

Regarding the derivative bounds, we start noting from \eqref{eq:a2u1} and \eqref{eq:rec-b}--\eqref{eq:rec-v} that all coefficients $(a_k, b_k, u_k, v_k)$ are polynomials in $(b_2', u_1')$. This implies that the series \eqref{eq:seriesRP} are actually holomorphic in $b_2'$ and $u_1'$. That is, if we track the full dependence of $x \in \{ a, b, u, v \}$ in terms of $H \in (-\infty, H_-]$, $b_2'\in B_r(b_2)$ and $u_1'\in B_r(u_1)$, then $x(H, b_2', u_1')$ is holomorphic on $b_2'$ (for every $H, u_1'$) or holomorphic in $u_1'$ (for every $H, b_2'$). The same is true for the series residual of $\sum_{k \geq m+1} x_k H^{-k}$. By Cauchy bounds on the derivatives of holomorphic functions, we obtain the second estimate of \eqref{eq:taylor_residual_rp}.

Finally, in order to prove the qualitative statement, it suffices to show that for any $(b_2, u_1) \in \mathbb R_+ \times \mathbb R$ we can find $r > 0$ and $C_{a}, C_b, C_u, C_v, C_{\mathrm{rad}}$ such that the hypotheses of Lemma \ref{lem:inductive_RPn} are satisfied for any $(b_2', u_1') \in B_r(b_2) \times B_r(u_1)$ and $m = 3$. Take $r = b_2/2$ so that $b_2' > b_2/2$ and then take $C_a, C_b, C_u, C_v > 0$ satisfying the bounds \eqref{eq:catbounds} for $k \leq m-2 = 1$. If needed, enlarge $C_v$ such that $\frac{n C_D + (n-2)C_{N_2}}{10(n-1)(n-2)}\leq \frac12 C_v$ (we can always do this since we see in \eqref{eq:auxC-def} that $C_D$ and $C_{N_2}$ are uniformly bounded for $C_{\mathrm{rad}} > 1$, with a bound just dependent on $n, C_a, C_b$). Then, taking $C_{\mathrm{rad}}$ sufficiently large in terms of $C_a, C_b, C_u, C_v$, it is clear that we satisfy \eqref{eq:close-b}, \eqref{eq:close-au}, \eqref{eq:close-v} (in the case of \eqref{eq:close-v}, using also $\frac{n C_D + (n-2)C_{N_2}}{10(n-1)(n-2)}\leq \frac12 C_v$). Therefore, as we found constants satisfying the hypotheses of Lemma~\ref{lem:inductive_RPn}, we conclude the proof of the proposition.
\end{proof}

\subsection{The expansion around the complex quadric}\label{subsec:expquadric}

Here, we work analogously to Subsection \ref{subsec:expRP}, but with the system \eqref{eq:abUVquadric}, which is the system that we employ close to $H = + \infty$ (corresponding to the complex quadric singular orbit). First of all, we clear the denominators in \eqref{eq:abUVquadric}, obtaining
\begin{equation} \label{eq:masterEQ_expquadric}
\begin{aligned}
Q \frac{d\alpha}{dH} &= 2(\alpha \mu  + \beta \eta), & (n-1) D Q \frac{d\mu}{dH} &= (n-1) DH \mu + (H^2 - Q + (2n-1) \lambda ) N_+, \\
Q \frac{d\beta}{dH} &= 2(\alpha \eta + \beta \mu),  &  (n-1) D Q \frac{d\eta}{dH} &= (n-1) DH \eta + (H^2 - Q + (2n-1) \lambda ) N_-
\end{aligned}
\end{equation}
and we recall \eqref{eq:NDQquadric} for the formulas of $N_\pm, D, Q$ in terms of $\alpha, \beta, \mu, \eta$. Recalling also the $H$-expansions for $\alpha, \beta, \mu, \eta$ given in \eqref{eq:seriesquadric}, and expanding $X = \sum_{k} X_k H^{-k}$ for $X \in \{ N_\pm, D, Q  \}$, we have
\begin{align} \begin{split}\label{eq:NDQk_quadric}
Q_k &= \frac{2(n-1)}{2n-1} \left( 2\mu_{-1}\mu_{k+1} +  \sum_{\substack{i+j=k\\ i,j\geq 1}} \mu_i \mu_j \right)+ 2(n-1) \left( 2\eta_0 \eta_k + \sum_{\substack{i+j=k\\ i,j\geq 2}} \eta_i \eta_j \right) , \\
D_k &= 4(n-1)\alpha_k - 4 \sum_{\substack{i+j=k\\ i,j\geq 1}} \beta_i \beta_j - 8 \beta_{-1} \beta_{k+1}, \\
N_{+, k} &= 2(n-1)\alpha_k + 4(n-1) \sum_{\substack{i+j=k\\ i,j\geq 1}} \beta_i \beta_j + 8(n-1) \beta_{-1} \beta_{k+1}, \\
N_{-, k} &= 4 \sum_{\substack{i+j=k\\ i\geq 0,\, j\geq 1}} \alpha_i \beta_j + 4\alpha_{-2}\beta_{k+2} + 4 \beta_{-1} \alpha_{k+1} - 2(n-1)\beta_k, 
\end{split} \end{align}
where $k \geq 2$ and even on the $Q_k, D_k, N_{+, k}$ equation, and $k \geq -1$ is odd on the equation of $N_{-, k}$. The lower order terms of the expansion can be obtained analogously, but the formulas do not follow the same pattern, and they are given by:
\begin{align}  \begin{split} \label{eq:first_coefs_NDQ_quadric}
N_{+,-2} &= 2(n-1)\alpha_{-2} + 4(n-1)\beta_{-1}^2 , \qquad N_{-, -3} = 4\alpha_{-2}\beta_{-1},  \\
D_{-2} &= 4\big((n-1)\alpha_{-2} - \beta_{-1}^2\big), \qquad Q_{-2} = 1, \qquad Q_0 = \lambda + \frac{4(n-1)}{2n-1}\,\mu_1 + 2(n-1)\,\eta_0^2, \\
D_0 &= 4(n-1)\alpha_0 - 8\beta_{-1}\beta_1 - 1, \qquad
N_{+, 0} = 2(n-1)\alpha_0 + 8(n-1)\beta_{-1}\beta_1 - n.
\end{split} \end{align}

First of all, we recall from \eqref{eq:seriesquadric} that we impose $\mu_{-1} = 1$, and we let $(\alpha_{-2}, \beta_{-1})$ be our free inputs determining the $H$-expansions. The remaining low-order coefficients $\eta_0$ and $\mu_1$ are then obtained from the $H^{2}$ term in the $\frac{d\beta}{dH}$ and $\frac{d\mu}{dH}$ equations in \eqref{eq:masterEQ_expquadric}, respectively. Together with $\mu_{-1}$, these are given by:
\begin{equation} \label{eq:firstcoef-quad}
\mu_{-1} = 1, \qquad \eta_0 = -\frac{\beta_{-1}}{2\alpha_{-2}}, \qquad \mu_1 = (2n-1)(n+1) - \frac{(2n-1)(n-1)}{8\alpha_{-2}}.
\end{equation}

Now, we assume that we know $\alpha_k$, $\beta_{k+1}$, $\eta_{k+2}$, $\mu_{k+3}$ for all $-2 \leq k < m$, with $k, m$ even. In particular, via formulas \eqref{eq:NDQk_quadric}, we can compute $D_k, N_{+, k}, Q_k$ for even $k<m$, as well as $Q_m$, and $N_{-, k}$ for any $k<m-1$ odd. Then, developing the first equation of \eqref{eq:masterEQ_expquadric} in series and equating the $H^{-(m-1)}$ terms, we obtain
\begin{align} \begin{split}
- (Q_{-2} m + 2\mu_{-1}) \alpha_{m}  &=  \sum_{\substack{i+j = m-2 \\ 0\leq i \leq m \\ 2 \leq j \leq m-2}} Q_i j \alpha_j - 2Q_m \alpha_{-2} + 2 \alpha_{-2} \mu_{m+1} + 2 \sum_{\substack{ i + j = m-1 \\ 0 \leq i \leq m-2 \\ 1 \leq  j \leq m-1 }} \alpha_i \mu_j \\
&\quad + 2  \beta_{-1} \eta_m + 2 \sum_{\substack{ i+j = m-1 \\ 1 \leq i \leq m-1 \\ 0 \leq j \leq m }} \beta_i \eta_j  =: R_{\alpha, m}. \label{eq:alpha_k}
\end{split} \end{align}
We observe that all the terms in $R_{\alpha, m}$ are known and therefore using \eqref{eq:first_coefs_NDQ_quadric}--\eqref{eq:firstcoef-quad}, we can compute $\alpha_m$ as $\alpha_m = \frac{-R_{\alpha, m}}{m+2}$. Next, we proceed to look at the equation for $d \beta / dH$ in \eqref{eq:masterEQ_expquadric}. Taking the $H^{-m}$ term, we obtain that
\begin{align*}
-(m+3)\,\beta_{m+1} &= 2\alpha_{-2}\,\eta_{m+2} + R_{\beta, m},
\end{align*}
where
\begin{equation}\label{eq:cq-Rbeta}
R_{\beta, m} := -\beta_{-1}Q_m +   \sum_{\substack{i+j=m-1\\ 0\le i\le m-2\\ 1\le j\le m-1}} j Q_i \beta_j   + 2 \eta_0 \alpha_m + 2\sum_{\substack{i+j=m\\ 0\le i\le m-2\\ 2\le j\le m}} \alpha_i\, \eta_j  + 2\beta_{-1}\mu_{m+1} +  2\sum_{\substack{i+j=m\\ 1\le i\le m-1\\ 1\le j\le m-1}} \beta_i \mu_j 
\end{equation}
In this case, we cannot solve for $\beta_{m+1}$ since $\eta_{m+2}$ is not known yet; however, we can compute $R_{\beta, m}$ and express
\begin{equation} \label{eq:cq-beta}
\beta_{m+1} = -\frac{2\alpha_{-2}}{m+3}\, \eta_{m+2} - \frac{1}{m+3}\, R_{\beta, m}.
\end{equation}

Now, we proceed to look at the $H^{-(m-1)}$ term in the $d\eta / dH$ equation in \eqref{eq:masterEQ_expquadric} and the $H^{-m}$ in $d\mu / dH$ one. First notice that since $Q_{-2} = 1$ from \eqref{eq:first_coefs_NDQ_quadric}, we have that in the parenthesis $(H^2 - Q + (2n-1)\lambda)$ in \eqref{eq:masterEQ_expquadric} the $H^2$ cancels with the leading term of $Q$. Similarly, we notice a cancellation at leading order $(n-1) DQ \frac{d\mu}{dH} - (n-1) DH \mu$ when $Q\sim H^2$ and $\mu \sim H$. We obtain that
\begin{equation} \label{eq:circa}
c_{\circ \eta} \eta_{m+2} + c_{\circ \mu} \mu_{m+3} + c_{\circ \beta} \beta_{m+1} = R_{\circ, m}, \qquad \mbox{ for } \circ \in \{ \eta, \mu \}
\end{equation}
where
\begin{align} \begin{split}
c_{\eta \eta} &= -4(n-1)\big( (m+3)(n-1)\alpha_{-2}-(m+1)\beta_{-1}^2\big), \qquad 
c_{\eta \mu} = \frac{16(n-1)\alpha_{-2}\beta_{-1}}{2n-1}, \\
c_{\eta\beta} &=-\frac{2(n-1)\big((n-1)\alpha_{-2}+\beta_{-1}^2\big)}{\alpha_{-2}},\qquad 
c_{\mu\beta}=-\frac{2(n-1)^2(2n-1)\beta_{-1}}{\alpha_{-2}} \\
c_{\mu \mu} &= - 4(n-1)\big((m+2)(n-1)\alpha_{-2}-(m+4)\beta_{-1}^2\big), \qquad
c_{\mu\eta} = -4(n-1)^2(2n-1)\beta_{-1}
\end{split} \end{align}
and
\begin{align} \begin{split} \label{eq:cq-Romegatau}
R_{\eta, m} =&  (n-1) \sum_{\substack{j+k=m\\  0\le j\le m-2 \\ 2\le k\le m}} (D_{-2}Q_j + Q_{-2} D_j ) k \eta_k  +  (n-1)\sum_{\substack{i+j+k=m-2\\ 0\le i\le m-2, 0\le j\le m-2\\ 2\le k\le m-2}} k\, D_i Q_j \eta_k \\
&\quad + (n-1) D_m^{\mathrm{des}} \eta_0 + (n-1)\sum_{\substack{i+j=m\\ 0\le i\le m-2 \\ 2\leq j \leq m}} D_i  \eta_{j} 
   \\
  &\quad + ((2n-1)\lambda - Q_0) N_{-,m-1}^{\rm des} - Q_{m+2}^{\mathrm{des}} N_{-, -3} - \sum_{\substack{i+j=m-1\\ 2\le i\le m \\ -1\leq j \leq m-3}} Q_i N_{-,j}, 
  \end{split} \end{align}
\begin{align}
R_{\mu, m} =& (n-1)\big[ D_{-2}Q_m\mu_1 + \sum_{\substack{i+k=m+1\\ 0\le i\le m-2 \\ -1\le k\le m+1}} k (D_{-2} Q_i + D_i Q_{-2}) \mu_k + \sum_{\substack{i+j+k=m-1\\ 0\le i\le m-2,\ 0\le j\le m\\ -1\le k\le m+1}} k D_i Q_j \mu_k \big] \nonumber\\
&\quad + (n-1) \left[ -D_{-2} Q_{m+2}^{\mathrm{des}} \mu_{-1} + D_{m}^{\mathrm{des}} (Q_{-2}\mu_1 - Q_0 \mu_{-1})\right]  +(n-1)D_m^{\mathrm{des}} \mu_{1} \label{eq:cq-Rtau}  \\
  & \quad +  (n-1)\sum_{\substack{i+j=m+1\\ 0\le i\le m-2 \\ 3\le j\le m+1}} D_i \mu_{j} + \big((2n-1)\lambda - Q_0\big)\, N_{+,m}^{\mathrm{des}} - N_{+, -2} Q_{m+2}^{\mathrm{des}} - \sum_{\substack{i+j=m\\ 2\le i\le m \\ 0\le j\le m-2}} Q_i N_{+,j}.\nonumber 
\end{align}
Here $D_m^{\rm des}$, $N_{+,m}^{\rm des}$, $N_{-,m-1}^{\rm des}$, $Q_{m+2}^{\rm des}$ denote these coefficients with their top-order term removed, that is, the term carrying the not-yet-determined $\beta_{m+1},\eta_{m+2},\mu_{m+3}$:
\begin{equation}\label{eq:des-quad}
\begin{aligned}
D_m^{\rm des} &:= D_m + 8\beta_{-1}\beta_{m+1}, &\qquad N_{+,m}^{\rm des} &:= N_{+,m} - 8(n-1)\beta_{-1}\beta_{m+1},\\
N_{-,m-1}^{\rm des} &:= N_{-,m-1} - 4\alpha_{-2}\beta_{m+1}, &\qquad Q_{m+2}^{\rm des} &:= Q_{m+2} - \tfrac{4(n-1)}{2n-1}\mu_{m+3} - 4(n-1)\eta_0\eta_{m+2},
\end{aligned}
\end{equation}
for $m \geq 0$. With this, $R_{\eta,m}$ and $R_{\mu,m}$ involve only $\alpha_m$ and coefficients of lower order; and the removed terms are exactly those retained on the left-hand side of \eqref{eq:circa} through $c_{\circ\beta}$, $c_{\circ\eta}$, $c_{\circ\mu}$, that is, the ones involving $\beta_{m+1}$, $\eta_{m+2}$, or $\mu_{m+3}$. Thus, substituting \eqref{eq:cq-beta} in \eqref{eq:circa}, we get
\begin{equation}
\begin{pmatrix}
c_{\eta \eta}-\frac{2\alpha_{-2}c_{\eta \beta}}{m+3} & c_{\eta \mu} \\
 c_{\mu \eta}-\frac{2\alpha_{-2}c_{\mu \beta}}{m+3} & c_{\mu \mu}
\end{pmatrix}
\cdot
\begin{pmatrix}
\eta_{m+2} \\
\mu_{m+3}
\end{pmatrix}
=
\begin{pmatrix}
R_{\eta, m} + \frac{c_{\eta \beta}}{m+3} R_{\beta, m} \\
R_{\mu, m} + \frac{c_{\mu \beta}}{m+3} R_{\beta, m} 
\end{pmatrix}
\end{equation}
The determinant is 
\begin{align} \begin{split}
\Delta &:= \left( c_{\eta \eta}-\frac{2\alpha_{-2}c_{\eta \beta}}{m+3} \right) c_{\mu \mu} - \left( c_{\mu \eta}-\frac{2\alpha_{-2}c_{\mu \beta}}{m+3} \right) c_{\eta \mu} \\
&= \frac{16\,(m+2)^2(m+4)(n-1)^2\big((n-1)\alpha_{-2}-\beta_{-1}^2\big)^2}{m+3}
\end{split} \end{align}
Therefore, the matrix is invertible for $m > -2$ and $(n-1)\alpha_{-2} \neq \beta_{-1}^2$, and we obtain
\begin{equation} \label{eq:omega_tau}
\begin{pmatrix}
\eta_{m+2} \\ \mu_{m+3}
\end{pmatrix}
= \frac{1}{\Delta}
\begin{pmatrix}
c_{\mu \mu} & - c_{\eta \mu} \\
-c_{\mu \eta} + \frac{2\alpha_{-2}c_{\mu \beta}}{m+3} & c_{\eta \eta}-\frac{2\alpha_{-2}c_{\eta \beta}}{m+3} 
\end{pmatrix}
\cdot 
\begin{pmatrix}
R_{\eta, m} + \frac{c_{\eta \beta}}{m+3} R_{\beta, m} \\
R_{\mu, m} + \frac{c_{\mu \beta}}{m+3} R_{\beta, m} 
\end{pmatrix}
\end{equation}

\begin{lemma}
\label{lem:recQn-1} 
Fix $(\alpha_{-2}, \beta_{-1}) \in \mathbb C^2 $ with $\alpha_{-2}\neq 0$ and $(n-1)\alpha_{-2} \neq \beta_{-1}^2$ (equivalently, $D_{-2}\neq 0$). We define $(\alpha_k, \beta_k, \eta_k, \mu_k)$ from \eqref{eq:firstcoef-quad}, \eqref{eq:alpha_k}, \eqref{eq:cq-beta}, \eqref{eq:omega_tau}, with $N_{\pm, k}, D_k, Q_k$ defined from \eqref{eq:NDQk_quadric}.

Taking $x = \sum_{k} x_k H^{-k}$ as a formal power series, for $x \in \{ \alpha, \beta, \eta, \mu, N_{\pm}, D, Q \}$, then the ODE system \eqref{eq:masterEQ_expquadric} and the definitions \eqref{eq:NDQquadric} hold as equalities of formal power series.
\end{lemma}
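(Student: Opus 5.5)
The plan is to mirror the argument behind Lemma~\ref{lem:recRPn}. We show that the recurrences determine every coefficient uniquely, and that they are exactly the coefficient identities of \eqref{eq:masterEQ_expquadric} at every power of $H$.

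\emph{Step 1: the right-hand sides of \eqref{eq:NDQquadric} as formal series.} Substitute the Laurent-type series \eqref{eq:seriesquadric} into the polynomial expressions \eqref{eq:NDQquadric}. Collecting powers of $H^{-1}$ reproduces \eqref{eq:NDQk_quadric} and \eqref{eq:first_coefs_NDQ_quadric}. This is purely algebraic. The only point to check is the parity bookkeeping:
\begin{itemize}
\item $\alpha$ and $\eta$ contain only even powers, and $\beta$ and $\mu$ only odd ones;
\item hence $D$, $N_+$ and $Q$ are even with lowest order $H^{2}$;
\item $N_-=2\beta(2\alpha-n+1)$ is odd with lowest order $H^{3}$.
\end{itemize}
So the definitions \eqref{eq:NDQquadric} hold identically once the $N_\pm,D,Q$ coefficients are defined this way.

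\emph{Step 2: parity of each equation in \eqref{eq:masterEQ_expquadric}.} Each equation has a definite parity. For instance, $Q\,d\alpha/dH-2(\alpha\mu+\beta\eta)$ is odd, and the $\mu$-equation is even. Thus only half of the powers of $H$ carry nontrivial identities.

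We then go through the powers in increasing order of $k=-(\text{power})$.
\begin{itemize}
\item The top orders fix the normalizations, and they hold automatically. At the leading order of the $\mu$-equation, $Q_{-2}=1$ and the cancellation of $H^2-Q$ give exactly $\mu_{-1}=1$ (or the equation is vacuous).
\item The next orders of the $\beta$- and $\mu$-equations produce the formulas \eqref{eq:firstcoef-quad} for $\eta_0$ and $\mu_1$. This is a direct substitution using $\alpha_{-2}\ne0$.
\item In the general step with $m\ge0$ even, equating the $H^{-(m-1)}$ coefficient of the $\alpha$-equation isolates $\alpha_m$ with coefficient $-(m+2)$, which is nonzero. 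This is \eqref{eq:alpha_k}.
\item The $H^{-m}$ coefficient of the $\beta$-equation is \eqref{eq:cq-beta}.
\item The $H^{-(m-1)}$ coefficient of the $\eta$-equation and the $H^{-m}$ coefficient of the $\mu$-equation give \eqref{eq:circa}. Substituting \eqref{eq:cq-beta} yields the $2\times2$ system with determinant $\Delta$.
\end{itemize}

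\emph{Step 3: solvability and exhaustion.} The determinant $\Delta$ is nonzero since $m\ge0$ and $D_{-2}\neq0$. Hence \eqref{eq:omega_tau} is equivalent to those two coefficient identities. Feeding $\eta_{m+2}$ back into \eqref{eq:cq-beta} gives $\beta_{m+1}$, and all four identities at this level then hold.

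We must also check that every nontrivial coefficient identity appears exactly once as $m$ ranges over even integers $\ge0$, together with the low-order ones. This holds because each of the four equations contributes one identity per level, matching the unknowns $\alpha_m,\beta_{m+1},\eta_{m+2},\mu_{m+3}$. By induction the formal series satisfy all four equations.

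\emph{Main obstacle.} The step I expect to be hardest is Step 2: verifying that the explicit coefficients $c_{\circ\bullet}$, the ``des'' splittings \eqref{eq:des-quad}, and $R_{\eta,m},R_{\mu,m}$ faithfully encode the $\eta$- and $\mu$-coefficient identities. In particular, every occurrence of $\beta_{m+1},\eta_{m+2},\mu_{m+3}$ must be moved to the left-hand side, including those hidden inside $D_m$, $N_{\pm}$ and $Q_{m+2}$, and no higher unknown may appear on the right.

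I would handle this by tracking, in each product $DQ\,d\mu/dH$, $DH\mu$ and $(H^2-Q+(2n-1)\lambda)N_+$, which factor pairs can reach the given order with a top-index unknown. By the leading orders $D_{-2},Q_{-2},\mu_{-1},N_{\pm,\mathrm{lead}}$, only the listed terms can. Summing their contributions should reproduce the stated $c$'s, with the leading-order cancellations ($Q_{-2}=1$, $\mu_{-1}=1$) accounting for the shifted index $m+3$.
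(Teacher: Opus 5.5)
Your route is the paper's: the lemma is just the summary of the coefficient matching that precedes it. Your handling of a general level $m\ge 0$ is what the paper does: isolate $\alpha_m$, write $\beta_{m+1}$ via \eqref{eq:cq-beta}, and invert the $2\times 2$ system using $\Delta\neq 0$. The gap is in the bottom-level bookkeeping of Step~3. The following identities are not reached by any level $m\ge 0$:
the $H^3$ coefficient of the $\alpha$-equation;
the $H^2$ coefficient of the $\beta$-equation;
the $H^5$ and $H^3$ coefficients of the $\eta$-equation;
the $H^4$ and $H^2$ coefficients of the $\mu$-equation.
Besides the free $\alpha_{-2},\beta_{-1}$, the only coefficients available to meet them are $\mu_{-1}=1$, $\eta_0$ and $\mu_1$. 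The $H^3,H^4,H^5$ identities of the $\alpha$-, $\mu$- and $\eta$-equations follow from $Q_{-2}=\mu_{-1}=1$. The $\beta$-identity gives $\eta_0$, and the $H^2$ identity of the $\mu$-equation is met by $\mu_1$. But the $H^3$ coefficient of the $\eta$-equation,
\[
(n-1)D_{-2}\,\eta_0+\bigl((2n-1)\lambda-Q_0\bigr)N_{-,-3}=0,
\]
is an extra compatibility condition. It involves $\eta_0$, $\mu_1$ and $\lambda$, and no coefficient is left to absorb it. Your one-to-one count, and your claim that the top orders hold automatically, skip it; if it failed, no formal solution with these leading terms would exist. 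It does hold, but only by computation. From \eqref{eq:firstcoef-quad} with $\lambda=2(n+1)$ one gets $(2n-1)\lambda-Q_0=\frac{(n-1)D_{-2}}{8\alpha_{-2}^2}$. With $N_{-,-3}=4\alpha_{-2}\beta_{-1}$ and $\eta_0=-\frac{\beta_{-1}}{2\alpha_{-2}}$, the two terms then cancel. The paper's derivation also passes over this silently; it is the line you need to add.

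Relatedly, treat the low orders as verification by substitution rather than as derivation. The $H^4$ identity of the $\mu$-equation is $(Q_{-2}-1)\bigl((n-1)D_{-2}\mu_{-1}+N_{+,-2}\bigr)=0$. It is satisfied by $\mu_{-1}=1$ but does not force it. The $H^2$ identity, $(n-1)D_{-2}(Q_0-2\mu_1)=\bigl((2n-1)\lambda-Q_0\bigr)N_{+,-2}$, is affine in $\mu_1$ with slope $8(n-1)\beta_{-1}^2$. So it does not determine $\mu_1$ when $\beta_{-1}=0$, although the value in \eqref{eq:firstcoef-quad} satisfies it for every $\beta_{-1}$. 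Since the lemma only claims that the prescribed coefficients solve \eqref{eq:masterEQ_expquadric}, checking each low-order identity by substitution is exactly what is required.
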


 \subsection{Catalan estimates and convergence of the $H$-expansion at $\s Q_{n-1}$} \label{subsec:catQC}

Here, we proceed analogously to Subsection \ref{subsec:catRPn} but for the expansion at the complex quadric $\s Q_{n-1}$. That is, we want to show that the formal series from Lemma \ref{lem:recQn-1} are actually convergent in a neighbourhood of $H = +\infty$ and obtain explicit bounds on the remainder. The main ingredient is the following lemma, which propagates bounds through the recurrences \eqref{eq:alpha_k}, \eqref{eq:cq-beta}, \eqref{eq:omega_tau}, \eqref{eq:NDQk_quadric}.

\begin{lemma}\label{lem:inductive_CQ}
	Fix an integer $n\ge 3$, let $(\alpha_{-2}, \beta_{-1}) \in\C^2$ such that $\alpha_{-2} \neq 0$ and $(n-1)\alpha_{-2} \neq \beta_{-1}^2$, and consider the formal series expansions from Lemma \ref{lem:recQn-1}. 
    
    Let $C_\alpha, C_\beta, C_\eta, C_\mu>0$ and $C_{\mathrm{rad}}>1$, and define the auxiliary constants $C_D,C_{N_+},C_{N_-},C_Q$, as well as the constants $C_{D^{\mathrm{des}}}$, $C_{N_\pm^{\mathrm{des}}}$, $C_{Q^{\mathrm{des}}}$ and $C_{R_\beta}, C_{R_\eta}, C_{R_\mu}$, as in the proof below. 
    
    Assume that the closure inequalities \eqref{eq:close_alpha} and \eqref{eq:close-omega}--\eqref{eq:close-beta} hold. Assume also the bounds
	\begin{equation}\label{eq:catbounds-qc}
		\begin{gathered}
			|\alpha_{k}|\le C_\alpha\,\mathrm{Cat}(k)\,C_{\mathrm{rad}}^{k},\qquad
			|\beta_{k+1}|\le C_\beta \mathrm{Cat}(k) C_{\mathrm{rad}}^{k},\\
			|\eta_{k+2}|\le C_\eta  \mathrm{Cat}(k) C_{\mathrm{rad}}^{k}, \qquad
			|\mu_{k+3}|\le C_\mu \mathrm{Cat}(k) C_{\mathrm{rad}}^{k}
		\end{gathered}
	\end{equation}
	for every even $k$ with $0 \le k\leq m -2$, for some $m \geq 2$ even. Then, the bounds \eqref{eq:catbounds-qc} also hold at $k=m$.
\end{lemma}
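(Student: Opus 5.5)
The proof will follow the scheme of Lemma~\ref{lem:inductive_RPn}, adapted to the index shifts of \eqref{eq:catbounds-qc}. We first bound the coefficients of the composite quantities $D$, $N_\pm$ and $Q$. Then we bound, in order, $\alpha_m$, the right-hand sides $R_{\beta,m}$, $R_{\eta,m}$, $R_{\mu,m}$, the pair $(\eta_{m+2},\mu_{m+3})$, and finally $\beta_{m+1}$.

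\textbf{Step 1: bounds for $D$, $N_\pm$, $Q$.} We treat $\alpha_k,\beta_{k+1},\eta_{k+2},\mu_{k+3}$ as Catalan-bounded sequences with shifts $\sigma_\alpha=0$, $\sigma_\beta=-1$, $\sigma_\eta=-2$, $\sigma_\mu=-3$. The low-order coefficients $\alpha_{-2}$, $\beta_{-1}$, $\eta_0$, $\mu_{-1}$, $\mu_1$ are given explicitly by \eqref{eq:firstcoef-quad}; we pull them out of the convolutions exactly as \eqref{eq:NDQk_quadric} already does. The remaining sums involve only indices covered by the hypothesis, so \eqref{eq:catconv} applies. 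This yields constants $C_D$, $C_{N_+}$, $C_{N_-}$, $C_Q$ with bounds of the form $|X_k|\le C_X\,\mathrm{Cat}(k+\sigma_X)\,C_{\mathrm{rad}}^{k+\sigma_X}$, valid for all indices below the unknown level. In the same way we obtain constants $C_{D^{\mathrm{des}}}$, $C_{N_\pm^{\mathrm{des}}}$, $C_{Q^{\mathrm{des}}}$ for the truncated quantities \eqref{eq:des-quad}. These truncated quantities avoid the unknowns $\beta_{m+1}$, $\eta_{m+2}$, $\mu_{m+3}$, so their bounds hold one level higher. The explicit low-order constants, such as $|\alpha_{-2}|$, $|\beta_{-1}|$, $|Q_0|$, $|D_{-2}|$ and $|N_{+,-2}|$, enter these constants as fixed multiplicative factors.

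\textbf{Step 2: $\alpha_m$.} We use $\alpha_m=-R_{\alpha,m}/(m+2)$ from \eqref{eq:alpha_k}. Every term of $R_{\alpha,m}$ is either a Catalan convolution or a single known coefficient times a bounded constant. The weighted sum $\sum jQ_i\alpha_j$ contributes a factor $m$, which the denominator $m+2$ absorbs. This gives a closure inequality \eqref{eq:close_alpha} of the form $(\text{const}\cdot C_\alpha C_Q+\dots)/C_{\mathrm{rad}}\le C_\alpha$. A subtlety is that $R_{\alpha,m}$ also contains $\mu_{m+1}$, $\eta_m$ and $Q_m$. By the hypothesis at level $k=m-2$ these are already bounded, but only at the same Catalan level $\mathrm{Cat}(m-2)$. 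We therefore need $\mathrm{Cat}(m-2)C_{\mathrm{rad}}^{m-2}\le \mathrm{Cat}(m)C_{\mathrm{rad}}^m/C_{\mathrm{rad}}^{2}$, which holds trivially. This is the source of the $1/C_{\mathrm{rad}}$ gains.

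\textbf{Step 3: $R_{\beta,m}$, $R_{\eta,m}$, $R_{\mu,m}$.} We bound these from \eqref{eq:cq-Rbeta}, \eqref{eq:cq-Romegatau} and \eqref{eq:cq-Rtau}, producing constants $C_{R_\beta}$, $C_{R_\eta}$, $C_{R_\mu}$. The weighted sums carry factors up to $m+1$. Next we apply the explicit inverse \eqref{eq:omega_tau}. Its determinant satisfies $\Delta\sim m^2\,(n-1)^2\big((n-1)\alpha_{-2}-\beta_{-1}^2\big)^2$, while the entries of the adjugate are $O(m)$. Each of $\eta_{m+2}$ and $\mu_{m+3}$ therefore equals an $O(1/m)$ multiple of quantities that are $O(m)$. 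So we obtain $m$-uniform bounds, with constants involving $|(n-1)\alpha_{-2}-\beta_{-1}^2|^{-1}$ and $|\alpha_{-2}|^{-1}$. This gives closure inequalities \eqref{eq:close-omega}, \eqref{eq:close-tau} of the form (explicit combination)$\,\le C_\eta C_{\mathrm{rad}}$ (resp.\ $C_\mu C_{\mathrm{rad}}$). Finally, \eqref{eq:cq-beta} gives $|\beta_{m+1}|\le \frac{2|\alpha_{-2}|C_\eta+C_{R_\beta}}{m+3}\,\mathrm{Cat}(m)\,C_{\mathrm{rad}}^{m}/C_{\mathrm{rad}}$, which yields \eqref{eq:close-beta}.

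\textbf{Main obstacle.} The delicate step is Step 3. We must ensure that the coefficient of $m$ in the numerators of \eqref{eq:omega_tau} is genuinely cancelled by the $m^2$ growth of $\Delta$, uniformly in $m\ge 2$. We must also verify that the terms of $R_{\eta,m}$ and $R_{\mu,m}$ that are linear in top coefficients, such as $D_{-2}Q_i\,k\,\eta_k$ with $k=m$, are bounded using only the already-known levels. A single term sitting at the current level would break the induction. We would check these terms first by tracking the index ranges in \eqref{eq:cq-Romegatau} and \eqref{eq:cq-Rtau}, and crudely replace $m$-dependent rational factors by their suprema over even $m\ge2$, in the same way that Lemma~\ref{lem:inductive_RPn} used $m-2\le m-1$ and $m+n-1\ge5$.
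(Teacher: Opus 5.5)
\textbf{Gap in Step~3: terms at the current level are overlooked.}

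Your overall scheme is the paper's: bound $Q$, $D$, $N_\pm$ and their truncations; then $\alpha_m$ from \eqref{eq:alpha_k}; then $(\eta_{m+2},\mu_{m+3})$ from \eqref{eq:omega_tau} using $|\Delta|\gtrsim (m+2)^2$; then $\beta_{m+1}$ from \eqref{eq:cq-beta}. Your treatment of $\alpha_m$ is correct: every term of $R_{\alpha,m}$ sits at level $\mathrm{Cat}(m-2)C_{\mathrm{rad}}^{m-2}$, which is the source of the $C_{\mathrm{rad}}^2$ in \eqref{eq:close_alpha}.

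In Step~3 you claim the same kind of gain for the other unknowns: closures of the form ``$\dots\le C_\eta C_{\mathrm{rad}}$'', and a factor $1/C_{\mathrm{rad}}$ in your bound for $|\beta_{m+1}|$. This rests on your premise that no term at the current level occurs, and that premise is false. The freshly computed $\alpha_m$ enters all three right-hand sides:
\begin{itemize}
\item $R_{\beta,m}$ contains $2\eta_0\alpha_m$.
\item $R_{\eta,m}$ contains $(n-1)\eta_0 D^{\mathrm{des}}_m$ with $D^{\mathrm{des}}_m\ni 4(n-1)\alpha_m$, and $((2n-1)\lambda-Q_0)N^{\mathrm{des}}_{-,m-1}$ with $N^{\mathrm{des}}_{-,m-1}\ni 4\beta_{-1}\alpha_m$.
\item $R_{\mu,m}$ contains $\alpha_m$ through $D^{\mathrm{des}}_m$ and through $N^{\mathrm{des}}_{+,m}\ni 2(n-1)\alpha_m$.
\end{itemize}
This also affects your Step~1: the top-level truncated quantities contain $\alpha_m$, so they can only be bounded after $\alpha_m$ is. In addition, \eqref{eq:cq-beta} contains $\frac{2\alpha_{-2}}{m+3}\eta_{m+2}$.

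If you bound these by $C_\alpha\,\mathrm{Cat}(m)C_{\mathrm{rad}}^m$ and $C_\eta\,\mathrm{Cat}(m)C_{\mathrm{rad}}^m$, as you do for $\eta_{m+2}$, they have exactly the size of the target. They carry no negative power of $C_{\mathrm{rad}}$. The extra factor $O(1/m)$ they pick up gives nothing uniform, since $m$ may equal $2$. Consequently your displayed bound for $|\beta_{m+1}|$ is off by a factor $C_{\mathrm{rad}}$. Likewise a closure ``$\dots\le C_\eta C_{\mathrm{rad}}$'' only yields $|\eta_{m+2}|\le C_\eta C_{\mathrm{rad}}\,\mathrm{Cat}(m)C_{\mathrm{rad}}^m$, which is not \eqref{eq:catbounds-qc} at $k=m$. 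The induction does not close as you have set it up.

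\textbf{How to repair it.}

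Contrary to your last paragraph, current-level terms do not break the induction. They are harmless because of the order in which the unknowns are bounded within one step: $\alpha_m$ is controlled before $(\eta_{m+2},\mu_{m+3})$, and $\eta_{m+2}$ before $\beta_{m+1}$.

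This is how the paper proceeds. Its closures \eqref{eq:close-omega}--\eqref{eq:close-tau} have right-hand sides $C_\eta$, $C_\mu$, with no gain. The only parts of $C_{R_\beta}$, $C_{R_\eta}$, $C_{R_\mu}$ without a negative power of $C_{\mathrm{rad}}$ are multiples of $C_\alpha$. The last closure is \eqref{eq:close-beta}, namely $\tfrac25|\alpha_{-2}|C_\eta+C_{R_\beta}\le C_\beta$. This is exactly why Proposition~\ref{prop:QCseries} needs the hierarchy $C_\alpha\ll C_\eta,C_\mu\ll C_\beta\ll C_{\mathrm{rad}}$, and not merely a large $C_{\mathrm{rad}}$.

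Alternatively, you can recover genuine $C_{\mathrm{rad}}^{-2}$ gains. Carry into Step~3 the sharper intermediate bound from Step~2, $|\alpha_m|\le |R_{\alpha,m}|/(m+2)\lesssim \mathrm{Cat}(m-2)C_{\mathrm{rad}}^{m-2}$. Do the same with the pre-closure bound for $\eta_{m+2}$ in the $\beta$-step. The constants in the later closures must then be built from those bounds, which mix all of $C_\alpha,C_\beta,C_\eta,C_\mu$, rather than from $C_\alpha$ and $C_\eta$. Your proposal does neither, so the $\eta$, $\mu$ and $\beta$ steps do not close as written.
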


\begin{proof}
The strategy is very similar to that of Lemma \ref{lem:inductive_RPn}. In particular, we recall the first part of the proof of Lemma \ref{lem:inductive_RPn}. We recall \eqref{eq:catconv}, which we will use in our case with $\sigma_\alpha = 0$, $\sigma_\beta = -1$, $\sigma_\eta = -2$ and $\sigma_\mu = -3$. We also fix $\sigma_{D} = 0$, $\sigma_{N_+} = 0$, $\sigma_{N_-} = 1$, and $\sigma_Q = -2$, as will become clear in the next steps.

First, we study $Q_k$ for $k \geq 2$ from \eqref{eq:NDQk_quadric}, using our induction bounds on $\mu, \eta$ together with \eqref{eq:catconv}. We obtain
\begin{equation} \label{eq:Q_bound}
|Q_k| \leq C_Q \mathrm{Cat}(k-2)C_{\mathrm{rad}}^{k-2}
\end{equation}
for all $2\leq k \leq m$ and
\begin{align*}\begin{split}
C_Q := \frac{2(n-1)}{2n-1} \left( 2 C_\mu + |\mu_1|^2 + 2|\mu_1| \frac{C_\mu}{C_{\mathrm{rad}}^2} + \frac{C_\mu^2}{C_{\mathrm{rad}}^4} \right) + 2(n-1) \left( 2| \eta_0 | C_\eta + \frac{C_\eta^2}{C_{\mathrm{rad}}^2} \right).
\end{split} \end{align*}

Equipped with \eqref{eq:Q_bound}, and using again our induction bounds \eqref{eq:catbounds-qc} together with \eqref{eq:catconv}, we can bound $|R_{\alpha, m}|$, for $m \geq 2$, from \eqref{eq:alpha_k} as
\begin{align*}
|R_{\alpha, m}| &\leq C_{\mathrm{rad}}^{m-2} \mathrm{Cat}(m-2) \Big( (m-2) |Q_0| C_\alpha + \frac{(m-2)C_QC_\alpha}{C_{\mathrm{rad}}^2} + 2 C_Q | \alpha_{-2} | + 2|\alpha_{-2}| C_\mu + 2 |\mu_1| C_\alpha  \\
&\qquad + \frac{2C_\mu C_\alpha}{C_{\mathrm{rad}}^2} + 2 | \beta_{-1} | C_\eta + 2 \frac{C_\beta C_\eta}{C_{\mathrm{rad}}^2} + 2 | \eta_0 | C_\beta \Big)
\end{align*}
and therefore since $\alpha_m = \frac{-1}{m+2} R_{\alpha, m}$, we can close the induction bound provided that 
\begin{equation} \label{eq:close_alpha}
|Q_0| + \frac{C_Q}{C_{\mathrm{rad}}^2} + \frac{2(C_Q + C_\mu)|\alpha_{-2}|}{C_\alpha} + 2|\mu_1| + \frac{2C_\mu}{C_{\mathrm{rad}}^2} + \frac{2|\beta_{-1}| C_\eta + 2|\eta_0| C_\beta + 2C_\beta C_\eta / C_{\mathrm{rad}}^2}{C_\alpha} \leq C_{\mathrm{rad}}^2.
\end{equation}

 Now, we focus on $N_{\pm, k}^{\mathrm{des}}$ and $D_k^\mathrm{des}$ from \eqref{eq:NDQk_quadric}--\eqref{eq:des-quad}. Using our bounds \eqref{eq:catbounds-qc} (including the $\alpha_m$ bound from Step 2), as well as \eqref{eq:catconv}, we obtain
\begin{align} \begin{split} \label{eq:des_bounds}
|D_k^{\mathrm{des}}| \leq C_{D^{\mathrm{des}}} \mathrm{Cat}(k) C_{\mathrm{rad}}^k, \quad |N_{+, k}^{\mathrm{des}}| \leq C_{N_+^{\mathrm{des}}} \mathrm{Cat}(k) C_{\mathrm{rad}}^k, \quad |N_{-, k-1}^{\mathrm{des}}| \leq C_{N_-^{\mathrm{des}}} \mathrm{Cat}(k) C_{\mathrm{rad}}^k.
\end{split} \end{align}
for any $2\leq k \leq m$ even, where
\begin{align*}
C_{D^{\mathrm{des}}} &:= 4(n-1)C_\alpha + 4 \frac{C_\beta^2}{C_{\mathrm{rad}}^2}, \qquad C_{N_+^{\mathrm{des}}} := 2(n-1)C_\alpha + 4(n-1) \frac{C_\beta^2}{C_{\mathrm{rad}}^2}, \\
C_{N^{\mathrm{des}}_-} &:= 4 \frac{C_\alpha C_\beta}{C_{\mathrm{rad}}^2} + 4 |\beta_{-1}| C_\alpha + 2(n-1)\frac{C_\beta}{C_{\mathrm{rad}}^2}.
\end{align*}

Regarding $Q_{m+2}^{\mathrm{des}}$, we analogously obtain
\begin{equation}
|Q_{m+2}^{\mathrm{des}}| \leq \underbrace{\left( \frac{2(n-1)}{2n-1} \left( 2|\mu_1|C_\mu + \frac{C_\mu^2}{C_{\mathrm{rad}}^2} \right) + 2(n-1)C_\eta^2 \right)}_{=:\,C_{Q^{\mathrm{des}}}} \mathrm{Cat}(m-1) C_{\mathrm{rad}}^{m-2}.
\end{equation}

For any $0 \leq k \leq m-2$ even, we obtain
\begin{equation*}
|D_k| \leq C_D \mathrm{Cat}(k)C_{\mathrm{rad}}^{k}, \quad |N_{+, k}| \leq C_{N_+} \mathrm{Cat}(k) C_{\mathrm{rad}}^{k}, \quad |N_{-, k-1}| \leq C_{N_-} \mathrm{Cat}(k) C_{\mathrm{rad}}^{k},
\end{equation*}
where
\begin{align*}
C_D &:= \max \left\{ |D_0|, C_{D^{\mathrm{des}}} + 8|\beta_{-1}| C_\beta \right\} \\
C_{N_+} &:= \max \left\{ |N_{+, 0}|, C_{N_+^{\mathrm{des}}} + 8(n-1)|\beta_{-1}| C_\beta \right\}, \\
C_{N_-} &:= \max \left\{ |N_{-, -1}|, C_{N_-^{\mathrm{des}}} + 4|\alpha_{-2}| C_\beta \right\}. 
\end{align*}

Applying \eqref{eq:catconv} and our bounds from the previous steps on \eqref{eq:cq-Rbeta}, \eqref{eq:cq-Romegatau} and \eqref{eq:cq-Rtau}, and using $4\leq m+2$, we obtain
\begin{equation}
|R_{x, m}| \leq (m+2) C_{R_x} \mathrm{Cat}(m) C_{\mathrm{rad}}^m, \qquad \mbox{ for } x\in \{ \beta, \eta, \mu \}
\end{equation}
with
\begin{align}
C_{R_\beta} &:= \frac{|\beta_{-1}| C_Q}{4C_{\mathrm{rad}}^2} + |Q_0| \frac{C_\beta}{C_{\mathrm{rad}}^2} + \frac{C_QC_\beta}{C_{\mathrm{rad}}^4} +  \frac12|\eta_0| C_\alpha + \frac{C_\alpha C_\eta }{2C_{\mathrm{rad}}^2}
+ 2 \frac{|\beta_{-1}| C_\mu}{2C_{\mathrm{rad}}^2} + 2\frac{|\mu_{1}| C_\beta }{2C_{\mathrm{rad}}^2} + 2\frac{C_\beta C_\mu}{2C_{\mathrm{rad}}^4}, \nonumber\\
C_{R_\eta} &:= \frac{(n-1)}{C_{\mathrm{rad}}^2} \Big( C_\eta \left(|D_{-2}||Q_0| + \frac{|D_{-2}|C_Q }{C_{\mathrm{rad}}^2} + C_D\right) + \frac{|Q_0|C_D C_\eta}{C_{\mathrm{rad}}^2} + \frac{C_D C_Q C_\eta}{C_{\mathrm{rad}}^4} \Big) + \frac{n-1}{4} C_{D^{\mathrm{des}}} | \eta_0 |\nonumber\\
&  \quad + \frac{(n-1)C_{D}}{4} \cdot \frac{C_\eta}{C_{\mathrm{rad}}^2} + \frac{(2n-1)\lambda + |Q_0|}{4} C_{N_-^{\mathrm{des}}} + |N_{-, -3}| \frac{C_{Q^{\mathrm{des}}}}{4C_{\mathrm{rad}}^2} + \frac{C_Q C_{N_-}}{4C_{\mathrm{rad}}^2}, \nonumber\\
C_{R_\mu} &:= \frac{(n-1)}{C_{\mathrm{rad}}^2} \Big( C_\mu \Big(|D_{-2}||Q_0| + C_D + \frac{|D_{-2}|C_Q}{C_{\mathrm{rad}}^2}\Big) + C_D \Big(|Q_0||\mu_1| +C_Q+ \frac{|Q_0| C_\mu + |\mu_1| C_Q}{C_{\mathrm{rad}}^2}\Big)  \nonumber\\
& \quad  + \frac{C_D C_Q C_\mu}{C_{\mathrm{rad}}^4} \Big) + \frac{n-1}{4}\big(2|\mu_1| + |Q_0|\big) C_{D^{\mathrm{des}}} + \frac{(2n-1)\lambda + |Q_0|}{4} C_{N_+^{\mathrm{des}}} \nonumber\\
&\quad  + \frac{(n-1)|D_{-2}|\big(|\mu_1|C_Q + C_{Q^{\mathrm{des}}}\big) + |N_{+,-2}| C_{Q^{\mathrm{des}}} + C_Q C_{N_+} + (n-1) C_D C_\mu}{4\,C_{\mathrm{rad}}^2}. \label{eq:CRx-cq}
\end{align}

We now solve for $\eta_{m+2}, \mu_{m+3}$ via \eqref{eq:omega_tau}. Since $(n-1)\alpha_{-2} - \beta_{-1}^2 = D_{-2}/4$ and $\frac{m+4}{m+3} \geq 1$, the determinant satisfies
\begin{equation*}
|\Delta| \,=\, \frac{16(m+2)^2(m+4)(n-1)^2}{m+3}\,\Big|\frac{D_{-2}}{4}\Big|^2 \,\geq\, (m+2)^2 (n-1)^2 |D_{-2}|^2.
\end{equation*}
Moreover, using $m+3 \leq 2(m+2)$ and $m+4 \leq 2(m+2)$, the diagonal coefficients satisfy
\begin{equation*}
|c_{\mu\mu}|,\; |c_{\eta\eta}| \,\leq\, 8(n-1)(m+2)\big( (n-1)|\alpha_{-2}| + |\beta_{-1}|^2 \big),
\end{equation*}
while for the off-diagonal entries of the matrix in \eqref{eq:omega_tau} we note the identity $2\alpha_{-2} c_{\mu\beta} = c_{\mu\eta}$, so that
\begin{equation*}
\Big| -c_{\mu\eta} + \frac{2\alpha_{-2} c_{\mu\beta}}{m+3} \Big| \,=\, \frac{m+2}{m+3}\,|c_{\mu\eta}| \,\leq\, |c_{\mu\eta}|,
\end{equation*}
and, using $2|\alpha_{-2}||c_{\eta\beta}| = 4(n-1)\big|(n-1)\alpha_{-2} + \beta_{-1}^2\big|$ together with $4 \leq m+2$,
\begin{equation*}
\Big| c_{\eta\eta} - \frac{2\alpha_{-2} c_{\eta\beta}}{m+3} \Big| \,\leq\, 9(n-1)(m+2)\big( (n-1)|\alpha_{-2}| + |\beta_{-1}|^2 \big).
\end{equation*}
Finally, by Step 4 and $\frac{m+2}{m+3}\leq 1$, the right-hand side of \eqref{eq:omega_tau} satisfies
\begin{equation*}
\Big| R_{x, m} + \frac{c_{x\beta}}{m+3} R_{\beta, m} \Big| \,\leq\, (m+2)\big( C_{R_x} + |c_{x\beta}|\, C_{R_\beta} \big)\, \mathrm{Cat}(m)\, C_{\mathrm{rad}}^m, \qquad x \in \{\eta, \mu\}.
\end{equation*}
Plugging these into \eqref{eq:omega_tau}, one factor $(m+2)$ of $|\Delta|$ cancels the one above, the diagonal entries consume the other, and the off-diagonal contributions retain a spare factor $\frac{1}{m+2} \leq \frac14$ (resp.\ $\frac{1}{m+3} \leq \frac15$).  Therefore, we can close the induction bounds on $\eta_{m+2}$ and $\mu_{m+3}$ if
\begin{align} \label{eq:close-omega}
\frac{8\big((n-1)|\alpha_{-2}| + |\beta_{-1}|^2\big)\big(C_{R_\eta} + |c_{\eta\beta}|\, C_{R_\beta}\big)}{(n-1)\,|D_{-2}|^2} + \frac{|c_{\eta\mu}|\,\big(C_{R_\mu} + |c_{\mu\beta}|\, C_{R_\beta}\big)}{4(n-1)^2\,|D_{-2}|^2} \;\leq\; C_\eta, \\
 \label{eq:close-tau}
\frac{|c_{\mu\eta}|\,\big(C_{R_\eta} + |c_{\eta\beta}|\, C_{R_\beta}\big)}{5(n-1)^2\,|D_{-2}|^2} + \frac{9\big((n-1)|\alpha_{-2}| + |\beta_{-1}|^2\big)\big(C_{R_\mu} + |c_{\mu\beta}|\, C_{R_\beta}\big)}{(n-1)\,|D_{-2}|^2} \;\leq\; C_\mu.
\end{align}

and then we can deduce the desired bound for $\beta_{m+1}$ provided
\begin{equation} \label{eq:close-beta}
\frac{2|\alpha_{-2}|\, C_\eta}{5} + C_{R_\beta} \;\leq\; C_\beta,
\end{equation}
as we wanted to prove.
\end{proof}

\begin{proposition}\label{prop:QCseries}
	Fix $n\ge 3$. Consider $(\alpha_{-2}, \beta_{-1}) \in \mathbb R^2$ with $\alpha_{-2} \neq 0$ and $(n-1)\alpha_{-2} \neq \beta_{-1}^2$, and let $(\alpha_k, \beta_k, \eta_k, \mu_k)$ be the coefficients of the $H$-expansions as in Lemma~\ref{lem:recQn-1}. Then, the series given in \eqref{eq:seriesquadric} converge uniformly on $[H_+, +\infty)$ for a certain $H_+>0$, and the resulting $(\alpha, \beta, \eta, \mu)$ give a smooth solution to \eqref{eq:abUVquadric}.

    Moreover, the following explicit quantitative estimate holds. Fix $r > 0$ and $m \in \mathbb{Z}$ with $m \geq 2$ even and assume a choice of constants $C_\alpha, C_\beta, C_\eta, C_\mu > 0$, and $C_{\mathrm{rad}} > 1$ such that the hypotheses of Lemma \ref{lem:inductive_CQ} are satisfied for any initial coefficients $(\alpha_{-2}', \beta_{-1}') \in B_r(\alpha_{-2}) \times B_r(\beta_{-1}) \subset \mathbb C^2$. Then, for $x \in \{ \alpha, \beta, \eta, \mu \}$ we have that 
    \begin{align} \begin{split} \label{eq:taylor_residual_q}
    \left| x(H) - \sum_{k \leq m} x_k H^{-k} \right| &\leq \frac{4^{m+1} C_x C_{\mathrm{rad}}^{m+1}}{|H|^{m} (|H| - 4 C_{\mathrm{rad}})}  \\
    \left| \partial_y x(H) - \sum_{k \leq m} (\partial_y x_k) H^{-k} \right| &\leq \frac{4^{m+1} C_x C_{\mathrm{rad}}^{m+1}}{r |H|^{m} (|H| - 4 C_{\mathrm{rad}})}  \quad \mbox{ for } y\in \{ \alpha_{-2}, \beta_{-1} \}
    \end{split} \end{align}
    for any $H > 4C_{\mathrm{rad}}$. In particular, we see that we can take any $H_+ > 4 C_{\mathrm{rad}}$ in the first part of the statement.
\end{proposition}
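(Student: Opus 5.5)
The plan is to follow the proof of Proposition~\ref{prop:RPseries} almost verbatim, with Lemma~\ref{lem:inductive_CQ} replacing Lemma~\ref{lem:inductive_RPn} and Lemma~\ref{lem:recQn-1} replacing Lemma~\ref{lem:recRPn}. The quantitative estimate comes first, and the qualitative statement will follow from it.

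Fix $(\alpha_{-2}',\beta_{-1}')\in B_r(\alpha_{-2})\times B_r(\beta_{-1})$ and build the formal coefficients from Lemma~\ref{lem:recQn-1}. By hypothesis the bounds \eqref{eq:catbounds-qc} hold for $0\le k\le m-2$, and the closure inequalities hold uniformly on the polydisc. Strong induction with Lemma~\ref{lem:inductive_CQ} then gives \eqref{eq:catbounds-qc} for every even $k\ge 0$, with constants independent of the parameters.

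Since $\mathrm{Cat}(k)\le 4^k$, the coefficients of $H^{-k}$ in each of $\alpha,\beta,\eta,\mu$ are bounded by $C_x(4C_{\mathrm{rad}})^{k+c}$ for fixed small index shifts $c$. The tail beyond order $m$ is therefore dominated by a geometric series, which yields the first bound in \eqref{eq:taylor_residual_q} for $|H|>4C_{\mathrm{rad}}$ once the index shifts are tracked. Absolute convergence then lets us expand both sides of \eqref{eq:masterEQ_expquadric} as convergent series. By Lemma~\ref{lem:recQn-1} every coefficient of the difference vanishes, so the sums solve \eqref{eq:masterEQ_expquadric}. Since $Q>0$ and $D\neq 0$ for $H$ large (as $D_{-2}\neq0$ dominates), we may divide through, and hence \eqref{eq:abUVquadric} holds.

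For the derivative bounds, note that every coefficient is a rational function of $(\alpha_{-2}',\beta_{-1}')$. Its only denominators are powers of $\alpha_{-2}'$, $D_{-2}'$, and the nonvanishing factors $m+2$, $m+3$, $\Delta$. It is therefore holomorphic on the polydisc, provided the polydisc avoids $\alpha_{-2}'=0$ and $D_{-2}'=0$; this is implicit in the hypotheses, since the closure constants involve $1/|D_{-2}|$ and $|\eta_0|$. The uniform convergence makes the sums and tails holomorphic in each variable. Cauchy's estimate on a disc of radius $r$ then gives the second bound, losing the factor $1/r$.

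The qualitative part is the main obstacle. Given real $(\alpha_{-2},\beta_{-1})$ with $\alpha_{-2}\neq0$ and $D_{-2}\neq0$, we must exhibit $r$, $m=2$ and constants satisfying \eqref{eq:close_alpha} and \eqref{eq:close-omega}--\eqref{eq:close-beta}. Choose $r$ small so that $|\alpha_{-2}'|$ and $|D_{-2}'|$ stay bounded below and all low-order data ($\mu_1$, $\eta_0$, $Q_0$, $D_0$, $N_{\pm}$ at low order) stay bounded. Take $C_\alpha,C_\beta,C_\eta,C_\mu$ large enough to cover the $k=0$ coefficients, then let $C_{\mathrm{rad}}\to\infty$.

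Unlike the $\R\s P^n$ case, \eqref{eq:close-omega}--\eqref{eq:close-beta} do not gain a factor $1/C_{\mathrm{rad}}$ in every term. The terms $C_{D^{\mathrm{des}}}|\eta_0|$, $C_{N_\pm^{\mathrm{des}}}$ and $|\eta_0|C_\alpha$ in $C_{R_\eta}$, $C_{R_\mu}$ and $C_{R_\beta}$ are linear in $C_\alpha$ with no decay. I would handle this by an ordering of constants. First fix $C_\alpha$. Next choose $C_\eta$ and $C_\mu$ large compared with the $C_\alpha$-driven, $C_{\mathrm{rad}}$-independent contributions to $C_{R_\eta}$, $C_{R_\mu}$, $C_{R_\beta}$. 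Then choose $C_\beta\ge \tfrac{2}{5}|\alpha_{-2}|C_\eta+{}$(the leading part of $C_{R_\beta}$). Finally take $C_{\mathrm{rad}}$ large to kill every remaining term that carries negative powers of $C_{\mathrm{rad}}$. Here \eqref{eq:close_alpha} holds because its right-hand side is $C_{\mathrm{rad}}^2$.

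The coupling terms in \eqref{eq:close-omega}--\eqref{eq:close-tau}, such as $|c_{\eta\beta}|C_{R_\beta}$, contain $C_\beta$ only through $C_{\mathrm{rad}}$-suppressed terms. They also contain $C_\alpha|\eta_0|$, which is already fixed, so the ordering closes. This bookkeeping, namely checking that no term feeds $C_\eta$ or $C_\mu$ back into itself at order one, is the step I expect to need the most care.
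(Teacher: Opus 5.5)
Your proposal is correct and follows essentially the same route as the paper. The quantitative bounds come from strong induction with Lemma~\ref{lem:inductive_CQ}, the geometric tail via $\mathrm{Cat}(k)\le 4^k$, and Cauchy estimates, exactly as in Proposition~\ref{prop:RPseries}. The qualitative part uses the same ordering $1\ll C_\alpha\ll C_\eta,C_\mu\ll C_\beta\ll C_{\mathrm{rad}}$, justified by the fact that the only $C_{\mathrm{rad}}$-unsuppressed terms in $C_{R_\beta},C_{R_\eta},C_{R_\mu}$ are linear in $C_\alpha$.

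You also add two small points the paper leaves implicit: keeping $|\alpha_{-2}'|$ bounded below on the polydisc (needed for $\eta_0$ and $\mu_1$), and checking $D\neq 0$, $Q>0$ before dividing.
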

\begin{proof}
The proof of the quantitative statement follows in a completely analogous way to Proposition \ref{prop:RPseries} from Subsection \ref{subsec:catRPn}. Thus, let us focus on the qualitative statement, for which we simply need to show that there exists $r > 0$ and $C_\alpha, C_\beta, C_\eta, C_\mu > 0$, and $C_{\mathrm{rad}} > 1$ such that the hypotheses of Lemma \ref{lem:inductive_CQ} are satisfied for any $(\alpha_{-2}', \beta_{-1}') \in B_r(\alpha_{-2}) \times B_r (\beta_{-1}) \subset \mathbb C^2$. For that, we take $r$ sufficiently small so that $|(n-1)\alpha_{-2}' - (\beta_{-1}')^2|$ is lower bounded, and then we choose the constants as $ 1 \ll C_\alpha \ll C_\eta, C_\mu \ll C_\beta \ll C_{\mathrm{rad}}$, where $A\ll B$ means that $B$ is taken sufficiently large depending on $A$. It is obvious that \eqref{eq:close_alpha} holds since $C_{\mathrm{rad}}$ is sufficiently large depending on any other constant. Now, we notice from \eqref{eq:CRx-cq} that $C_{R_x} \lesssim 1 + C_\alpha$ for $x \in \{ \beta, \eta, \mu \}$ since the only terms of $C_{R_\beta}$, $C_{R_\eta}$ or $C_{R_\mu}$ without a factor of $C_{\mathrm{rad}}^{-1}$ are multiples of $C_\alpha$. Hence, we see that \eqref{eq:close-omega}--\eqref{eq:close-tau} also hold due to $C_\alpha \ll C_\eta, C_\mu$. Finally, we obtain that \eqref{eq:close-beta} holds from $C_\eta + C_\alpha \lesssim C_\beta$. 
\end{proof}

\section{The matching problem and Krawczyk's theorem}
\label{sec:krawczyk}

\subsection{The matching function}\label{subsec:matching}

In this section we use the local solutions constructed at $H=\pm\infty$ in Section~\ref{sec:Hexpansions} to reduce the existence of an $\s{SO}_{n+1}$-invariant Einstein metric on $\C\s P^n$ to the existence of a zero of a finite-dimensional matching function $\mathcal G\colon\R^4\to\R^4$.

\begin{definition}[Matching function]\label{def:matching}
	Let $(b_2,u_1,\alpha_{-2},\beta_{-1})\in\R^4$ with $b_2>0$, $\alpha_{-2} > 0$ and $(n-1)\alpha_{-2} > \beta_{-1}^2$ (so that $D_{-2} > 0$). We consider the local solution $(a, b, u, v)$ defined in $(-\infty, H_-]$ which agrees with the $H$-expansions generated by $(b_2, u_1)$ via Proposition \ref{prop:RPseries}. Similarly, we consider the solution $(\alpha, \beta,  \mu,\eta)$ agreeing with the local solution generated by $(\alpha_{-2}, \beta_{-1})$ via Proposition \ref{prop:QCseries}. We assume that both the $(a, b, u, v)$ and $(\alpha, \beta, \mu, \eta)$ solutions satisfy $D > \epsilon > 0$ for some uniform $\epsilon$. In that case, we say that our matching function $\mathcal G\colon \mathbb R^4 \to \mathbb R^4$ is \textit{well-defined} at the point $(b_2, u_1, \alpha_{-2}, \beta_{-1})$, and we define it by
    \begin{equation}\label{eq:matchingG}
		\mathcal G(b_2,u_1,\alpha_{-2},\beta_{-1}):=(a,b,u,v)(0)-(\alpha+\beta,\,\alpha-\beta,\,\mu+\eta,\,\mu-\eta)(0).
	\end{equation}
\end{definition}

\begin{remark} 
The hypothesis that $D > \epsilon > 0$ in order for $\mathcal{G}$ to be well-defined is essential, and will be checked in our proof. However, let us notice that as long as $D > \epsilon > 0$, the local solutions to \eqref{eq:abUV} and \eqref{eq:abUVquadric} can be continued up to $H = 0$, justifying that $\mathcal{G}$ is well-defined.

Assume a local solution of \eqref{eq:abUV} (equivalently, \eqref{eq:abUVquadric}) with $\mathcal G$ well-defined (thus, $D > \epsilon > 0$). We notice that $Q \geq \frac{1}{2n-1}(1 + H^2 + u^2 + v^2)$, in particular, both denominators of \eqref{eq:abUV} are lower bounded.  Gr\"onwall inequality ensures that $a$ and $b$ do not blow up in finite time, since $Q \geq \frac{1}{2n-1}(1 + H^2 + u^2 + v^2)$ so that $|da/dH| \leq C |a|$ and $|db/dH| \leq C|b|$ from \eqref{eq:abUV}, for some absolute constant $C > 0$. Once we know $a$ and $b$ remain bounded, so do $N_1$ and $N_2$ (see~\eqref{eq:abuvquantities}). Again, using our $Q$ lower bound together with $D > \epsilon$, and Gr\"onwall inequality, by~\eqref{eq:abUV}, we deduce that $u$ and $v$ do not blow up in finite time. Therefore the ODE solutions can be continued up to $H = 0$, which justifies evaluating at $H = 0$ in the definition of $\mathcal G$ in \eqref{eq:matchingG}.

\end{remark}

The intuition for the previous definition is clear. By standard ODE uniqueness theory (e.g.\ the Picard--Lindel\"of theorem), whenever the matching function vanishes, $(a,b,u,v)$ and $(\alpha+\beta,\alpha-\beta,\mu+\eta,\mu-\eta)$ represent the same local solution at $H=0$ (recall from Remark~\ref{rem:quadricvars} that the latter is a solution of \eqref{eq:abUV}). That is, we can extend $(a,b,u,v)$ to $H>0$ by simply taking $(a,b,u,v)=(\alpha+\beta,\alpha-\beta,\mu+\eta,\mu-\eta)$, and we obtain a global solution of \eqref{eq:abUV} on $\R$. 

Then, we can check that the corresponding solution satisfies the hypotheses of Proposition \ref{prop:equivalence}. First, we have that $a(H)$ and $b(H)$ are positive for $-H$ sufficiently large since $a = 1 + O(H^{-2})$ and $b = b_2 H^{-2} + O(H^{-4})$ with $b_2 > 0$. If $a$ (resp.\ $b$) takes the value $0$ for some $H \in \mathbb R$, then Gr\"onwall's inequality on $a' = 2au$ (resp.\ $b' = 2v b$) would imply that $a$ (resp.\ $b$) is constantly zero. Since that is not the case for $-H$ sufficiently large, we conclude $a, b > 0$ globally. By an analogous computation to \eqref{eq:preH}:
\[
  \frac{d}{dH} \left( \frac{H^2 - Q + (2n-1)\lambda}{D}  \right) =\frac{2 (H-n(u+v))}{(2n-1) Q}  \left( \frac{H^2 - Q + (2n-1)\lambda}{D}  \right) .
\]
Again, Gr\"onwall's inequality applied to $
\frac{\mathrm{scal}(g_t)}{D}=\frac{H^2 - Q + (2n-1)\lambda}{D}$, implies that the sign of $\frac{\mathrm{scal}(g_t)}{D}$ is constant. In particular, it cannot reach zero, so it suffices to check positivity at a single point. Since $D>0$ and, by~\eqref{eq:Qcoord}, $H^2 - Q + (2n-1)\lambda = \tfrac{n-2}{n-1}H^{2} + O(1)>0$ as $H\to-\infty$, we conclude $\frac{\mathrm{scal}(g_t)}{D}>0$ there, and hence everywhere.

Therefore, all the sign hypotheses of Proposition \ref{prop:equivalence} hold, and that yields $(h,f_1,f_2)$ generating an Einstein metric on the regular part $(0,T)\times\s G/\s H$. Moreover, due to Proposition~\ref{prop:smoothH}, we know that it closes up smoothly at both singular orbits. Smoothness at any $t\in(0,T)$ follows from classical ODE theory applied to \eqref{eq:E1}--\eqref{eq:E4}, since none of $h,f_1,f_2$ vanishes at any $t\in(0,T)$. All of this yields the following proposition.

\begin{proposition}\label{prop:gluing}
	Let $(b_2,u_1,\alpha_{-2},\beta_{-1})\in\R^4$ with $b_2 > 0$, $\alpha_{-2} > 0$, and $(n-1)\alpha_{-2} > \beta_{-1}^2$. Assume that the matching function $\mathcal G$ is well defined at $(b_2,u_1,\alpha_{-2},\beta_{-1})$, and that
	\[
	\mathcal G(b_2,u_1,\alpha_{-2},\beta_{-1})=0.
	\]
	Then the functions $(h,f_1,f_2)$ provided by Proposition \ref{prop:equivalence} define, through \eqref{eq:ansatz}, a smooth $\s{SO}_{n+1}$-invariant Einstein metric on $\C\s P^n$ with Einstein constant $\lambda=2(n+1)$.
\end{proposition}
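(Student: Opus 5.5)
The plan is to glue the two local solutions into one global solution of \eqref{eq:abUV} on $\R$, check the sign hypotheses of Proposition~\ref{prop:equivalence}, and then read off smoothness at both ends from Proposition~\ref{prop:smoothH}.

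\emph{Gluing.} Let $(a,b,u,v)$ be the solution on $(-\infty,0]$ obtained from the $(b_2,u_1)$-series of Proposition~\ref{prop:RPseries}, continued to $H=0$. Let $(\alpha,\beta,\mu,\eta)$ be the solution on $[0,\infty)$ obtained from Proposition~\ref{prop:QCseries}. By Remark~\ref{rem:quadricvars}, $(\alpha+\beta,\alpha-\beta,\mu+\eta,\mu-\eta)$ solves \eqref{eq:abUV}. Since $\mathcal G=0$, the two solutions agree at $H=0$. The right-hand side of \eqref{eq:abUV} is smooth wherever $D\neq0$ and $Q>0$, and here $Q\ge\lambda$ and $D>\epsilon$. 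Picard--Lindel\"of uniqueness therefore shows that the concatenation is a single smooth solution on $\R$.

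\emph{Sign conditions.} As $H\to-\infty$ we have $a=1+O(H^{-2})$ and $b=b_2H^{-2}+O(H^{-4})$ with $b_2>0$, so $a,b>0$ for $-H$ large. The equations $a'=2au/Q$ and $b'=2bv/Q$ are linear in $a$ and $b$, so $a$ and $b$ can never reach zero and stay positive on all of $\R$. For the quantity $S:=\bigl(H^2-Q+(2n-1)\lambda\bigr)/D$, I would redo the computation \eqref{eq:preH} in the variable $H$. This should show that $S$ satisfies a linear equation $dS/dH=c(H)S$ with a continuous coefficient $c$, so $S$ has constant sign. As $H\to-\infty$, \eqref{eq:Qcoord} gives $H^2-Q+(2n-1)\lambda=\tfrac{n-2}{n-1}H^2+O(1)>0$, and $D>0$, so $S>0$ everywhere. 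Proposition~\ref{prop:equivalence} then yields a positive smooth solution $(h,f_1,f_2)$ of \eqref{eq:E1}--\eqref{eq:E4} on $(0,T)$.

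\emph{Smoothness.} On $(-\infty,H_-]$ the solution is given by the convergent series \eqref{eq:seriesRP}, with $a_0=1$, $v_{-1}=-1/(n-1)$ and $b_2>0$. Proposition~\ref{prop:smoothH}(i) then gives \eqref{eq:smoothproj} at $t=0$. On $[H_+,\infty)$ the series \eqref{eq:seriesquadric} hold with $\mu_{-1}=1$, $\alpha_{-2}>0$ and $(n-1)\alpha_{-2}\neq\beta_{-1}^2$. Proposition~\ref{prop:smoothH}(ii) then gives \eqref{eq:smoothquadric} at $t=T$. Together with the Verdiani--Ziller criterion (Theorem~\ref{th:vzsmoothness}) and \S\ref{subsec:smoothnesscpn}, the metric \eqref{eq:ansatz} extends smoothly across $\R\s P^n$ and $\s Q_{n-1}$. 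In the interior, $h,f_1,f_2>0$, so the metric is smooth there as well. Hence it is a smooth Einstein metric with $\lambda=2(n+1)$, and the underlying manifold, given by the group diagram, is $\C\s P^n$.

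\emph{Main obstacle.} The delicate step is the sign conservation for $S$. One must verify the differential identity for $\mathrm{scal}(g_t)/D$, which is the heavy algebra already behind \eqref{eq:preH}. One must also make sure that the continuation to $H=0$ (bounded $a,b,u,v$ and $D>\epsilon$) is exactly what the well-definedness of $\mathcal G$ guarantees.
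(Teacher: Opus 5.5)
Your proposal is correct and follows essentially the same route as the paper. The steps match one for one: you glue at $H=0$ by Picard--Lindel\"of, you get positivity of $a,b$ from the linear equations $a'=2au/Q$ and $b'=2bv/Q$, and you show $S=(H^2-Q+(2n-1)\lambda)/D$ has constant sign via a linear ODE. Redoing \eqref{eq:preH} in $H$ gives exactly the identity the paper states, $\frac{dS}{dH}=\frac{2(H-n(u+v))}{(2n-1)Q}\,S$. You then conclude with Propositions~\ref{prop:equivalence} and~\ref{prop:smoothH}, as the paper does.
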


The tool that we will employ to find a zero of $\mathcal G$ is the following fixed point theorem, which is known as Krawczyk's fixed-point theorem (or more commonly, Krawczyk's method). This is one formalization of the standard Newton method, but with a rigorous guarantee that a root exists in a certain box. Although this is classical (see e.g.~\cite[Theorem 8.2]{MKC09}), we include a proof for completeness.

\begin{theorem}[Krawczyk]\label{th:krawczyk}
	Let $X=\prod_{i=1}^{m}[r_i^-,r_i^+]\subset\R^m$ be a compact box, let $\mathcal G\colon U  \to\R^m$ be a $C^1$-map on an open neighbourhood $U \supset X$. Fix $x_0\in X$ and let $\mathcal D \subset \R^{m \times m}$ be an \emph{interval matrix} enclosing the Jacobian of $\mathcal G$ on $X$, that is, a set of the form
$$
    \mathcal D = \big\{ M \in \mathcal{M}_{m\times m}(\R) : M_{ij} \in [d_{ij}^-, d_{ij}^+] \text{ for all } i,j \big\}
$$
such that $\partial_j \mathcal G_i(x)\in [d_{ij}^-,d_{ij}^+]$ for every $x\in X$ and all $i,j$. Fix $A\in\s{GL}_{m}(\R)$, and define the set
	\begin{equation}\label{eq:krawczykset}
		\mathcal K(x_0, A, X):=x_0- A \mathcal G(x_0)+\big\{(\Id- A M)(x-x_0):\ M\in\mathcal D,\ x\in X\big\}.
	\end{equation}
	If $\mathcal K(x_0, A, X)\subset\Int X$, then $\mathcal G$ has a zero in $X$.
\end{theorem}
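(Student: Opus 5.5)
The approach is to apply Brouwer's fixed-point theorem to the continuous map $\Phi(x):=x-A\mathcal G(x)$ on the box $X$. Zeros of $\mathcal G$ are exactly the fixed points of $\Phi$, because $A$ is invertible: $A\mathcal G(x)=0$ if and only if $\mathcal G(x)=0$. It therefore suffices to show that $\Phi(X)\subset\mathcal K(x_0,A,X)$. The hypothesis $\mathcal K(x_0,A,X)\subset\Int X\subset X$ then gives $\Phi(X)\subset X$. Since $X$ is a compact convex set homeomorphic to a closed ball, Brouwer yields a fixed point $x^*\in X$ with $\mathcal G(x^*)=0$.

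The key step is a mean-value representation of $\mathcal G(x)-\mathcal G(x_0)$. Fix $x\in X$. Because $X$ is convex, the segment from $x_0$ to $x$ lies in $X\subset U$. Since $\mathcal G$ is $C^1$, the fundamental theorem of calculus applied componentwise gives
\begin{equation*}
\mathcal G(x)-\mathcal G(x_0)=M_x(x-x_0),\qquad (M_x)_{ij}:=\int_0^1\partial_j\mathcal G_i\big(x_0+s(x-x_0)\big)\,ds.
\end{equation*}
Each integrand takes values in $[d_{ij}^-,d_{ij}^+]$ along the whole segment. An average of values in an interval stays in that interval, so $(M_x)_{ij}\in[d_{ij}^-,d_{ij}^+]$ and hence $M_x\in\mathcal D$.

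This is the one subtle point: the classical mean-value theorem fails for vector-valued maps with a single intermediate point. Either the integral form above or a separate intermediate point for each row $i$ works, and in both cases the interval-matrix structure of $\mathcal D$ (entrywise independence) is what absorbs the row-dependent choice.

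With $M_x$ in hand, we compute
\begin{equation*}
\Phi(x)=x_0-A\mathcal G(x_0)+(x-x_0)-AM_x(x-x_0)=x_0-A\mathcal G(x_0)+(\Id-AM_x)(x-x_0),
\end{equation*}
which lies in $\mathcal K(x_0,A,X)$ by definition. Continuity of $\Phi$ is immediate from that of $\mathcal G$. Brouwer then concludes the argument.

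I expect no serious obstacle. The only care needed is the entrywise mean-value argument and noting that the inclusion into the interior, not just into $X$, is more than is required here. The strict inclusion matters only if one also wants uniqueness, which would additionally need every $M\in\mathcal D$ to be invertible.
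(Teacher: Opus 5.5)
Your proof is correct and follows the paper's argument: Brouwer's theorem applied to $\Phi(x)=x-A\mathcal G(x)$ on $X$, with a mean-value step showing $\Phi(X)\subset X$. The only difference is where the mean value theorem is applied. The paper applies it to each component $\Phi_i$ separately, so each $\Phi_i(x)$ is the $i$-th coordinate of a possibly different point of $\mathcal K$, and the box structure of $X$ finishes the argument; your integral form instead produces a single $M_x\in\mathcal D$ and hence the slightly stronger inclusion $\Phi(X)\subset\mathcal K$.
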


\begin{proof}
Define $\Phi\colon X\to\R^m$, $\Phi(x):=x- A\mathcal G(x)$ and write $\Phi=(\Phi_1,\ldots, \Phi_m)$. The Jacobian of $\Phi$ at any point of $X$ is of the form $\Id - AM$ for some $M \in \mathcal D$. By the Mean Value Theorem along the segment from $x_0$ to $x$, for each $i\in\{1,\ldots,m\}$, there exists $\widetilde{x}_i$ such that $$\Phi_i(x) = \Phi_i(x_0) + \nabla \Phi_i (\widetilde{x}_i)(x-x_0) = \left[ x_0 - A\mathcal G (x_0) + (\Id - AM)(x-x_0)\right]_i, \qquad \mbox{for some } M \in \mathcal D.$$
 We observe that this is the $i$-th component of a point in $\mathcal K (x_0, A, X)$, which is contained in $X$ by hypothesis. Hence $\Phi_i(x) \in [r_i^-, r_i^+]$, and thus $\Phi\colon X \to X$. Since $\Phi$ is continuous and $X$ is a compact convex subset of $\R^m$, Brouwer's fixed point theorem yields $x_\star \in X$ with $\Phi(x_\star) = x_\star$, which implies $A\mathcal G (x_\star) = 0$, and since $A$ is invertible, $\mathcal G (x_\star) = 0$.
\end{proof}

\begin{remark}
Although the theorem holds irrespective of $x_0$ or $A$, in practice, in order to satisfy the hypothesis of $\mathcal K(x_0, A, X) \subset \mathrm{Int}(X) $ it is convenient to choose $x_0$ to be an approximate root of $\mathcal G$ and $A$ an approximate inverse of $\partial_j \mathcal G_i(x_0)$. Then, we can take a box $X$ centered around $x_0$ of side, for example, $2\varepsilon$ with $\varepsilon := |A\mathcal G(x_0)|$, which is a very small box due to $\mathcal{G}(x_0) \approx 0$. We see from \eqref{eq:krawczykset} that this choice will satisfy the hypothesis as long as $|(\mathrm{Id} - AM) (x-x_0)| < \varepsilon$. Since $|x-x_0| \lesssim \varepsilon$ and $A$ is an approximate inverse of $\partial_j \mathcal G_i(x_0)$, the containment $\mathcal K(x_0, A, X) \subset \mathrm{Int}(X)$ will hold easily with those choices.
\end{remark}

Therefore, the last remaining ingredient is the following:
\begin{lemma}
\label{lem:matchingzero}
Let $n \in \{ 3, 4, 5, 6, 7 \}$ and consider the values of $x_0 = (b_2, u_1, \alpha_{-2}, \beta_{-1})$ given in Table~\ref{table:x0_values} for each value of $n$. Define  
$$X = b_2^I \times u_1^I \times \alpha_{-2}^I \times \beta_{-1}^I$$
where $x^I = [x-\delta, x+ \delta]$ for each $x\in \{ b_2, u_1, \alpha_{-2}, \beta_{-1} \}$ (with $\delta$ given in Table \ref{table:x0_values}). The matching function $\mathcal G$ is then well-defined in an open neighborhood of $X$. Moreover, there exists a matrix $A \in \s{GL}_4(\mathbb R)$ such that 
\begin{equation} \label{eq:final_eq}
\| A \mathcal G (x_0) \|_\infty + \delta \| \mathrm{Id} - A D \mathcal G (x) \|_{\infty \to \infty} < \delta, \qquad \forall x \in X,
\end{equation}
where $\|\cdot \|_{\infty\to \infty}$ denotes the matrix norm induced by $\|\cdot \|_{\infty}$, that is, $\|M\|_{\infty\to\infty}=\max_{i}\sum_{j}|M_{ij}|$.
\end{lemma}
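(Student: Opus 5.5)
The proof will be a computer-assisted verification in interval arithmetic, organized so that every step produces a rigorous enclosure. Fix $n\in\{3,\dots,7\}$ and take $x_0$ and $\delta$ from Table~\ref{table:x0_values}.

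The first step is to enclose the series data at both ends.

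\emph{Series at $\R\s P^n$.} I will choose $r>\delta$ so that the complex balls $B_r(b_2)\times B_r(u_1)$ contain a neighbourhood of the projection of $X$. I will also fix an odd truncation order $m$. With these choices I verify, in interval arithmetic, the hypotheses of Lemma~\ref{lem:inductive_RPn} uniformly over these balls:
\begin{itemize}[label={}]
\item the initial bounds \eqref{eq:catbounds} for $k\le m-2$, computed by running the recurrences \eqref{eq:a2u1}, \eqref{eq:rec-b}--\eqref{eq:rec-v} on complex interval boxes;
\item the closure inequalities \eqref{eq:close-b}, \eqref{eq:close-au}, \eqref{eq:close-v}, for suitable constants $C_a,C_b,C_u,C_v,C_{\mathrm{rad}}$.
\end{itemize}

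\emph{Series at $\s Q_{n-1}$.} I do the same for Lemma~\ref{lem:inductive_CQ}, checking \eqref{eq:close_alpha} and \eqref{eq:close-omega}--\eqref{eq:close-beta}. Here $r$ must be small enough that $D_{-2}$, $\alpha_{-2}$ and $b_2$ stay bounded away from zero.

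\emph{Starting boxes.} Propositions~\ref{prop:RPseries} and~\ref{prop:QCseries} then give, at chosen points $H_-<-4C_{\mathrm{rad}}$ and $H_+>4C_{\mathrm{rad}}$, interval enclosures of $(a,b,u,v)(H_-)$ and of $(\alpha,\beta,\mu,\eta)(H_+)$. The same propositions, through the Cauchy bound in \eqref{eq:taylor_residual_rp} and \eqref{eq:taylor_residual_q}, also enclose the first derivatives of these quantities with respect to the parameters. The truncated polynomial parts are evaluated in interval arithmetic on the box $X$, and the tails are absorbed as in \eqref{eq:CAP_taylor}.

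The second step is to propagate these enclosures to $H=0$ with a rigorous ODE solver (CAPD). I integrate \eqref{eq:abUV} forward from $H_-$ and \eqref{eq:abUVquadric} backward from $H_+$, together with their first variational equations. The variational equations carry the $4\times 4$ Jacobian of the flow, which is then composed with the parameter-derivative enclosures of the starting data.

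Throughout the integration I check that the enclosure of $D$ stays above some $\epsilon>0$. This gives well-definedness of $\mathcal G$ on a neighbourhood of $X$, in the sense of Definition~\ref{def:matching}.

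Subtracting the two enclosures at $H=0$, as in \eqref{eq:matchingG}, produces two objects:
\begin{itemize}[label={}]
\item a thin enclosure of $\mathcal G(x_0)$, obtained by running everything on the degenerate box $\{x_0\}$;
\item an interval matrix $\mathcal D\ni D\mathcal G(x)$ valid for all $x\in X$.
\end{itemize}

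The third step is the Krawczyk check. I take $A$ to be a floating-point approximate inverse of the midpoint of $\mathcal D$. I then evaluate in interval arithmetic the left-hand side of \eqref{eq:final_eq}, namely $\|A\mathcal G(x_0)\|_\infty+\delta\,\|\Id-AM\|_{\infty\to\infty}$, with the supremum taken over $M\in\mathcal D$. I verify that its upper endpoint is $<\delta$. This inequality implies $\mathcal K(x_0,A,X)\subset\Int X$, so Theorem~\ref{th:krawczyk} yields a zero of $\mathcal G$.

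The main obstacle I expect is wrapping and overestimation in the derivative enclosures. The Cauchy estimate divides the tail by $r$, and this forces a tradeoff in the choice of $H_\pm$:
\begin{itemize}[label={}]
\item $|H_\pm|$ must be large relative to $C_{\mathrm{rad}}$ for the tails to be small;
\item the integration interval $[H_-,H_+]$ must stay short enough that the ODE enclosures of the Jacobian remain tight.
\end{itemize}
If $\|\Id-AM\|$ comes out too large, I would first subdivide $X$ when computing $\mathcal D$, raise the series order $m$, and tune $C_{\mathrm{rad}}$ and $H_\pm$ separately for each $n$.
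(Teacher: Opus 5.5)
Your plan is essentially the paper's own computer-assisted proof. The steps line up one for one: interval verification of the Catalan closure inequalities on complex parameter balls, series enclosures at large $|H|$ with Cauchy bounds for the parameter derivatives, rigorous CAPD propagation to $H=0$ together with the flow Jacobian, and an interval check of \eqref{eq:final_eq}. One small omission: well-definedness of $\mathcal G$ in the sense of Definition~\ref{def:matching} needs $D>\epsilon$ also on the tails $(-\infty,H_-]$ and $[H_+,\infty)$, not only on the integrated interval. The paper checks the tails separately, using the remainder bounds of Propositions~\ref{prop:RPseries} and~\ref{prop:QCseries}.
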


\begin{proof}
	The proof is computer-assisted and the code can be found in the source of the arXiv submission of this paper as \texttt{CAP\_Einstein\_CPn\_SubmissionVersion.ipynb}. The main task is to be able to compute rigorous enclosures for $\mathcal G(x_0) $ and $D \mathcal G (x)$. In Cell~1 of the code, we compute the coefficients $(a_k, b_k, u_k, v_k)$ for $k \leq m$ according to Lemma \ref{lem:recRPn}. They are stored as polynomials in $b_2$ and $u_1$. Similarly, Cell~2 computes $(\alpha_k, \beta_k, \eta_k, \mu_k)$ using Lemma \ref{lem:recQn-1} (in that case, they are polynomials of $\alpha_{-2}$, $1/\alpha_{-2}$, $\beta_{-1}$ and $1/D_{-2}$). Cell~3 finds candidates for $C_a$, $C_b$, $C_u$, $C_v$ and $C_{\mathrm{rad}} $ and verifies they satisfy the hypotheses of Proposition \ref{prop:RPseries}. Cell~4 does the analogous task for the complex quadric singular orbit and Proposition \ref{prop:QCseries}. Then,  Cell~5 uses the results from Proposition \ref{prop:RPseries} and \ref{prop:QCseries} respectively to perform rigorous enclosures for $x (H)$ or $\partial_y x(H)$ where $x \in \{ a, b, u, v\}$ with $y \in \{ b_2, u_1 \}$ or $x\in \{ \alpha, \beta, \eta, \mu \}$ with $y \in \{ \alpha_{-2}, \beta_{-1} \}$. All the cases with $\partial_y$ require checking the hypotheses of the corresponding Proposition in complex balls $b_2 + B_r(0)$, $u_1 + B_r(0)$, $\alpha_{-2} + B_r(0)$ or $\beta_{-1} + B_r(0)$, for which we always take $r = 10^{-4}$.

    The approach described is not enough to compute the matching function. Since the hypotheses of Propositions \ref{prop:RPseries}, \ref{prop:QCseries} require $|H| \geq 4 C_{\mathrm{rad}}$ we can only evaluate $x(H)$ at explicit but large $H$ (for example, for $n  = 4$ we evaluate our series at $H = -800$ and $H = 1200$ respectively). In order to continue the solutions to $H = 0$, we notice that the ODEs \eqref{eq:rp-ode} and \eqref{eq:masterEQ_expquadric} are not singular, so we can use a standard ODE solver such as \texttt{capd::MpIOdeSolver} from CAPD. Cell 6 implements that; since CAPD is written in C++, we also implement a driver to run the code there. As we do that, we also check in this cell that the matching function $\mathcal G$ is well-defined on $X$ by checking that $D > \epsilon > 0$. 

    Combining our series from Cell~5 with the ODE solver from Cell~6, in Cell~7, we compute the matching function $\mathcal G$ and its Jacobian, which are algebraic combinations of $x(0)$ for $x\in \{a, b, u, v, \alpha, \beta, \eta, \mu \}$ or their derivatives with respect to $b_2, u_1, \alpha_{-2}, \beta_{-1}$. Then, Cell~8 implements \texttt{krawczyk\_check}, which accepts $n, \delta$ and other hyperparameters (orders $m$ and radius $r$ for our series expansions in Propositions \ref{prop:RPseries}, \ref{prop:QCseries}, values $H$ at which we evaluate each series, tolerance and order of our ODE solvers, $\ldots$) and returns a Boolean value that is true only when \eqref{eq:final_eq} holds. Then, we proceed to run the function \texttt{krawczyk\_check} for $n \in \{3, 4, 5, 6, 7 \}$ with suitable hyperparameters for each case, and obtain \texttt{True} for each of them, concluding the proof of \eqref{eq:final_eq}.

    Regarding the claim that $\mathcal G$ is well-defined on an open neighbourhood of $X$, this is checked during our computation. The ODE solver from Cell 6 obtains an enclosure for the solution, and checks using interval arithmetic that $D > 0$ on $[H_-, H_+]$ (hence, since it is continuous, is lower bounded). Regarding the intervals $(-\infty, H_-]$ or $[H_+, +\infty)$ we check it in Cell 5 using Proposition \ref{prop:RPseries} and \ref{prop:QCseries}.\qedhere    
\end{proof}

\subsection{Proof of Main~Theorem}\label{subsec:proofmain}

The only known Einstein metrics on complex projective spaces are homogeneous. These were classified by Ziller in \cite{Z82}: the Fubini-Study metric $g_{\mathrm{FS}}$, which is Kähler-Einstein, and Ziller's metric $g_Z$ on $\C\s P^{2n+1}$. We scale the Fubini-Study metric $g_{\mathrm{FS}}$ so that its minimum sectional curvature equals one. Its volume and Einstein constant are
\begin{equation*}
	\label{eq:volFS}
	\mathrm{Vol}(\C\s P^n, g_{\mathrm{FS}}) = \frac{\pi^n}{n!}, \quad \lambda=2(n+1).
\end{equation*} 

On the other hand, Ziller's metric $g_Z$ on $\C\s P^{2n+1}$ is obtained via a canonical variation of the Fubini-Study metric with respect to the twistor fibration $\s S^2 \rightarrow \C\s P^{2n+1}\rightarrow \H \s P^n$. Recall that given a Riemannian submersion $\pi\colon (M,g)\rightarrow (B,\check{g})$ with totally geodesic fiber $F$, a canonical variation of $g$ is a Riemannian metric of the form
\[g_{\kappa}=\kappa\, g_{\rvert \mathcal{V}\times \mathcal{V}} + g_{\rvert \mathcal{H}\times \mathcal{H}},\]
where $\kappa>0$ and $\mathcal{V}$ and $\mathcal{H}$ denote the vertical and horizontal distributions. Since the fibers are totally geodesic, they are pairwise isometric, and the volume of $g_{\kappa}$ is given by
\[\mathrm{Vol}(M,g_{\kappa})= \mathrm{Vol}(B,\check{g})\, \mathrm{Vol}(F,\kappa\, g_{\rvert \mathcal{V}\times \mathcal{V}})=\kappa^{\dim F/2}\,\mathrm{Vol}(M,g).\]
By~\cite{Z82}, Ziller's metric is obtained at $\kappa=\tfrac{1}{n+1}$, with volume and Einstein constant
\begin{equation*}
	\label{eq:Zillermetric}
	\mathrm{Vol}(\C\s P^{2n+1}, g_{\mathrm{Z}}) = \frac{\pi^{2n+1}}{(n+1)(2n+1)!}, \quad \lambda=\frac{4(n^2+3n+1)}{n+1}.
\end{equation*}

\begin{remark}\label{rem:Hermitian}
By Ziller's classification~\cite{Z82}, every homogeneous Einstein metric on $\C\s P^n$ is, up to homothety and isometry, either the Fubini-Study metric or, in odd complex dimension, $g_Z$. Hence, they are Hermitian with respect to the pullback of $J$ by a suitable isometry, though not necessarily with respect to the fixed standard complex structure $J$.
\end{remark}

Recall that a geodesic orbit  space is a Riemannian manifold $(M,g)$ such that every geodesic is the orbit of a $1$-parametric subgroup of $\Isom(M)$, see~\cite{BN} for more details on geodesic orbit spaces. Now we state a well-known necessary condition for a homogeneous metric to be a geodesic orbit metric that will be useful to discuss the inhomogeneity of the Einstein metrics we found.
\begin{lemma}
\label{lem:jacobigo}
Let $M$ be a complete geodesic orbit space and let $\gamma$ be a unit speed geodesic of $M$. Then the Jacobi operator $R_{\dot\gamma}$ has constant spectrum along $\gamma$.
\end{lemma}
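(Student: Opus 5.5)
The plan is to exploit that in a geodesic orbit space every geodesic is an orbit of a one-parameter group of isometries, and isometries intertwine curvature operators. Let $\gamma$ be a unit speed geodesic. By the geodesic orbit property there is $X$ in the Lie algebra of $\Isom(M)$ such that $\gamma(s)=\exp(sX)\cdot\gamma(0)$ for all $s\in\R$. Write $\phi_s:=\exp(sX)$, a one-parameter group of isometries with $\phi_s(\gamma(0))=\gamma(s)$. Completeness guarantees that $\gamma$ is defined on all of $\R$ and that the parametrization by the flow is global, so the argument is not merely local.

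The key step is to show $d\phi_s(\dot\gamma(0))=\dot\gamma(s)$. Differentiate $\phi_s\circ\phi_\tau=\phi_{s+\tau}$ applied to $\gamma(0)$ with respect to $\tau$ at $\tau=0$; this gives $d\phi_s(\dot\gamma(0))=\frac{d}{d\tau}\big|_{\tau=0}\gamma(s+\tau)=\dot\gamma(s)$. Since $\phi_s$ is an isometry, it preserves the Riemann tensor: $d\phi_s\big(R(Y,Z)W\big)=R(d\phi_sY,d\phi_sZ)d\phi_sW$. Taking $Z=W=\dot\gamma(0)$ yields
\[
R_{\dot\gamma(s)}\circ d\phi_s = d\phi_s\circ R_{\dot\gamma(0)} \quad\text{on } T_{\gamma(0)}M .
\]
Hence $R_{\dot\gamma(s)}=d\phi_s\, R_{\dot\gamma(0)}\, (d\phi_s)^{-1}$. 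Since $d\phi_s\colon T_{\gamma(0)}M\to T_{\gamma(s)}M$ is a linear isometry, the two symmetric operators are conjugate and therefore have the same eigenvalues with multiplicities. This proves the spectrum is constant along $\gamma$.

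The only delicate point, and the expected main obstacle, is to make sure the geodesic coincides with the orbit of the one-parameter group with the same parametrization, up to an affine reparametrization. The definition gives $\gamma(s)=\exp(sX)\cdot p$ possibly only up to a constant speed factor. If $\gamma(s)=\exp(csX)\cdot p$, I would replace $X$ by $cX$. Then everything above goes through, since the curvature identity is insensitive to this rescaling.
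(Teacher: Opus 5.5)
Your proof is correct and is essentially the paper's argument: write $\gamma(t)=\varphi_t(\gamma(0))$ for a one-parameter group of isometries, deduce $d\varphi_t(\dot\gamma(0))=\dot\gamma(t)$, and use $d\varphi_t\circ R_{\dot\gamma(0)}=R_{\dot\gamma(t)}\circ d\varphi_t$ with $d\varphi_t$ a linear isometry to conclude that the spectra agree. Your explicit use of the group law $\varphi_s\circ\varphi_\tau=\varphi_{s+\tau}$ justifies the velocity identity that the paper summarizes as ``differentiating.''
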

\begin{proof}
Let $\gamma$ be a unit speed geodesic of $M$. As $M$ is a geodesic orbit space, we can assume that $\gamma(t)=\varphi_t(p)$, where $\{\varphi_t\}_{t\in \R}\leq \mathrm{Isom}(M)$ is a $1$-parametric subgroup of isometries with $\varphi_0=\Id$. Differentiating, we obtain $\dot\gamma(t)=d\varphi_t(p)(\dot\gamma(0))$. Notice that $d\varphi_t R_{\dot{\gamma}(0)}=R_{\dot{\gamma}(t)}d\varphi_t$, and as $d\varphi_t$ is a linear isometry, this implies that $R_{\dot{\gamma}(0)}$ and $R_{\dot{\gamma}(t)}$ have the same eigenvalues. Consequently, the spectrum of the Jacobi operator $R_{\dot\gamma}$ is constant along $\gamma$. 
\end{proof}

\begin{proof}[Proof of Main Theorem]
	Fix $n\in\{3,4,5,6,7\}$. By Lemma~\ref{lem:matchingzero}, the matching function has a zero $(b_2,u_1,\alpha_{-2},\beta_{-1})\in X_n$ satisfying the hypotheses of Proposition~\ref{prop:gluing}, which then provides a smooth $\s{SO}_{n+1}$-invariant Einstein metric on $\C\s P^n$ with Einstein constant $\lambda=2(n+1)>0$. 

    By~\cite{Ta99}, every $\s{Sp}_{n+1}$-invariant metric on $\C\s P^{2n+1}$ is geodesic orbit, and the same holds for the Fubini-Study metric on $\C\s P^{n}$ as it is symmetric. Thus, all homogeneous Einstein metrics on $\C\s P^n$ are geodesic orbit by the classification in~\cite{Z82}. Hence, in order to prove that the Einstein metric that we found is inhomogeneous it suffices to check that the eigenvalues of the Jacobi operator along the geodesic $\gamma$ are not constant. In particular, by Equation~\eqref{eq:jacobit} and Lemma~\ref{lem:jacobigo}, if we prove that $h''/h$ is not constant, we would be done. In order to show that, we will prove that $h''$ attains both positive and negative values on $(0, T)$ (and therefore, so does $h''/h$)

    First, we prove that there exists $t'' \in (0, T)$ such that $h''(t'') < 0$. Since $h(0) > 0$ and $h(T) = 0$, we obtain $t' \in (0, T)$ such that $h'(t') < 0$. Similarly, since $h'(0) = 0$ and $h'(t') < 0$, we obtain $t''\in (0, T)$ such that $h''(t'') < 0$.

 Now, we prove that $h''/h$ also takes positive values. We express the quotient $h''/h$ for our solution. From \eqref{eq:E4} we have that $$w:= \frac{h'}{h} = -\frac{H + (n-1)(u+v)}{2n-1} \qquad \mbox{ so that } \qquad \frac{h''}{h} = w'+w^2 = Q\partial_H w + w^2.$$
    Expanding the term $Q \partial_H w + w^2$, we obtain
    \begin{equation}
    \frac{h''}{h} = Q \frac{-1 - (n-1)(\partial_H u + \partial_H v)}{2n-1} + \frac{H^2 + (n-1)^2(u+v)^2 + 2(n-1)H(u+v)}{(2n-1)^2}, 
    \end{equation}
    Using our convergent expansions \eqref{eq:seriesRP}, which give $u + v = -\frac{H}{n-1} + (u_1 + v_1)H^{-1} + O(H^{-3})$ and $Q = \frac{H^2}{n-1} + O(1)$ as $t \to 0$ (that is, as $H \to -\infty$), we obtain
    \begin{equation}
    \lim_{t\to 0} \frac{h''}{h} = \frac{u_1 + v_1}{2n-1} = \frac{1}{n}\left( u_1 + \frac{b_2}{n-1} - 2(n+1) \right),
    \end{equation}
    where the last equality follows from the formula for $v_1$ in \eqref{eq:a2u1}. Since we know that $b_2 \in b^I, u_1 \in u^I$ for our metrics with $n \in \{3, 4, 5, 6, 7\}$ (with $u^I_1$ and $b^I_2$ given by balls of radius $\delta \leq 10^{-11}$ around the values in Table \ref{table:x0_values}), and the right-hand side is increasing in both $b_2$ and $u_1$, it is a direct computation to check that $\lim_{t \to 0} \frac{h''}{h} > 0$ in each of those cases. Thus, we see that $h''(t) / h(t) > 0$ for $t$ sufficiently small. We conclude that $h''/h$ takes both positive and negative values on $(0, T)$, hence, $h''/h$ is not constant, and therefore our metric is inhomogeneous.\qedhere
    
\end{proof}
\begin{remark}
\label{rem:sec>0}
Since the previous proof shows that $h''/h$ takes both positive and negative values, the metric~$g$ in the \hyperref[th:main]{Main Theorem} does not have non-negative sectional curvature by \eqref{eq:jacobit}.
\end{remark}

Now we prove that the Einstein metrics on \hyperref[th:main]{Main Theorem} are not Hermitian. One could be tempted to assert that proving inhomogeneity of the metric $g$ implies that $g$ is not Hermitian by the discussion in Remark~\ref{rem:Hermitian}. However, notice that Ziller's classification is up to isometry. Thus, for instance, one can take $g = \Phi^\ast g_{\mathrm{FS}}$ for an arbitrary diffeomorphism $\Phi$, then $g$ is certainly homogeneous and Einstein, but it is naturally Hermitian with respect to $\Phi^\ast J$, and thus, not necessarily $J$-Hermitian, that is, not Hermitian with respect to the fixed standard $J$.

Firstly, we note the following result which gives a necessary condition for an $\s{SO}_{n+1}$-invariant metric to be Hermitian.

\begin{lemma}\label{lem:ratio-hermitian}
	Let $g$ be an $\s{SO}_{n+1}$-invariant metric of the form \eqref{eq:ansatz} on 
	$\C\s P^n$, let $J$ be the complex structure of $\C\s P^n$, and let 
	$r\in[0,\pi/4]$ denote the Fubini-Study distance of an $\s{SO}_{n+1}$-orbit to $\R\s P^n$. If $g$ is Hermitian with respect to $J$, then
	\[
	\frac{f_1(r)}{f_2(r)}=\cot(r)\qquad\text{for }r\in(0,\pi/4).
	\]
\end{lemma}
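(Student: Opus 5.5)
The idea is to compute how the standard complex structure $J$ acts on the Killing fields along the normal geodesic $\gamma(r)=[\cos(r)e_1+\sin(r)ie_2]$, and then impose $g(JX,JY)=g(X,Y)$ on a suitable pair of vectors. The vectors $E_{1,j}^*$ and $E_{2,j}^*$ ($j\ge3$) lie in $\g{n}_1$ and $\g{n}_2$ respectively. We expect $J$ to map them to each other up to scalar factors, and the ratio of those factors is computable from Fubini--Study data alone. Since $g$ is diagonal with $g(E_{1,j}^*,E_{1,j}^*)=f_1^2$ and $g(E_{2,j}^*,E_{2,j}^*)=f_2^2$, $J$-invariance of $g$ then forces the ratio $f_1/f_2$ to equal that factor.

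\textbf{Computing the Killing fields and the action of $J$.} Work in the horizontal lift at $z=\cos(r)e_1+i\sin(r)e_2\in\s S^{2n+1}$, where $J$ is multiplication by $i$ on the horizontal space $\{w:\mathrm{Re}\langle w,\bar z\rangle=0,\ \mathrm{Im}\langle w,\bar z\rangle =0\}$. For $j\ge3$, the matrix $E_{1,j}$ sends $e_1\mapsto -e_j$ and $e_2\mapsto 0$, so
\[
E_{1,j}z=-\cos(r)e_j, \qquad E_{2,j}z=-i\sin(r)e_j.
\]
Both vectors are orthogonal to $z$ and $iz$, hence horizontal, so they represent $E_{1,j}^*$ and $E_{2,j}^*$ at $\gamma(r)$. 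Multiplying the first by $i$ gives
\[
J E_{1,j}^*=-i\cos(r)e_j=\cot(r)\,E_{2,j}^*,
\]
valid for $r\in(0,\pi/4)$, where $\sin r\neq0$. We should double-check the sign convention for $E_{i,j}$ acting on vectors; a wrong sign would only change the sign of the factor, which does not affect the final ratio of norms.

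\textbf{Imposing $J$-invariance.} If $g$ is $J$-Hermitian, then
\[
g(E_{1,j}^*,E_{1,j}^*)=g(JE_{1,j}^*,JE_{1,j}^*)=\cot^2(r)\,g(E_{2,j}^*,E_{2,j}^*).
\]
Using $\mathcal Q(E_{1,j},E_{1,j})=\mathcal Q(E_{2,j},E_{2,j})=1$ and the ansatz \eqref{eq:ansatz}, this reads $f_1(t)^2=\cot^2(r)f_2(t)^2$. By positivity of $f_1$, $f_2$ and $\cot$ on $(0,\pi/4)$, this gives $f_1/f_2=\cot(r)$.

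\textbf{The parametrization subtlety.} The main obstacle is that $f_i$ are functions of the $g$-arclength $t$, whereas $r$ is the Fubini--Study distance. The key point is that both parametrize the same orbit space, through the same geodesic image $\gamma$ meeting all orbits orthogonally. Hence the identity holds orbitwise, and we interpret $f_i(r)$ as the value on the orbit at Fubini--Study distance $r$, which is how the lemma is phrased. We should make this identification explicit: the orbit through $\gamma_{\mathrm{FS}}(r)$ corresponds to a unique $t(r)$, and the computation above is pointwise on that orbit. Finally, the Killing-field identification $X\mapsto X^*$ is independent of the metric, so computing $J$ in Fubini--Study coordinates is legitimate.
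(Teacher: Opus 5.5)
Your proposal is correct and is essentially the paper's argument: via the horizontal lift $\cos(r)e_1+i\sin(r)e_2$ you get $E_{1,j}^*=-\cos(r)\,d\pi(e_j)$ and $E_{2,j}^*=-\sin(r)\,d\pi(ie_j)$, hence $JE_{1,j}^*=\cot(r)E_{2,j}^*$, and $J$-invariance of the diagonal metric then forces $f_1^2=\cot^2(r)f_2^2$. The paper does the same computation with $j=3$, but leaves implicit two points you spell out: the horizontality check, and the identification of the $g$-arclength $t$ with the Fubini--Study distance $r$ along the common normal geodesic.
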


\begin{proof}
Let us denote by $\pi\colon \s{S}^{2n+1}\rightarrow \C\s P^n$ the Hopf fibration and consider the horizontal lift $\widetilde\gamma(r)=\cos(r)e_1+\sin(r)ie_2\in\s S^{2n+1}$ of  $\gamma(r)=[\cos(r)e_1+\sin(r)ie_2]$, with~$r\in[0,\pi/4]$, which is a geodesic of $\C\s P^n$ with respect to the Fubini-Study metric. It can be checked that a Killing vector field $X^*$ induced by $X\in\g{so}_{n+1}$ satisfies
\begin{equation}
	\label{eq:pi_killing}
	X^*_{\gamma(r)}=d\pi_{\tilde{\gamma}(r)} (X\cdot \tilde{\gamma}(r)).
\end{equation}
Take the unit-$\mathcal{Q}$ generators 
$X=E_{1,3}\in\g{n}_1$ and $Y=E_{2,3}\in\g{n}_2$ from \eqref{eq:nsplitting}. Then by~\eqref{eq:pi_killing},
\begin{align*}
	X^{*}_{\gamma(r)}=d\pi_{\tilde{\gamma}(r)}( E_{1,3}\cdot\widetilde\gamma(r))=-\cos(r)d\pi_{\tilde{\gamma}(r)}(e_3), \enspace  	Y^{*}_{\gamma(r)}=d\pi_{\tilde{\gamma}(r)}( E_{2,3}\cdot\widetilde\gamma(r))=-\sin(r)d\pi_{\tilde{\gamma}(r)}(i e_3).
\end{align*}
Also, the complex structure $J$ of $\C\s P^n$ satisfies	$
	d\pi_{\tilde{\gamma}(r)}(iw)= Jd\pi_{\tilde{\gamma}(r)}(w)$ for each $w\in T_{\tilde{\gamma}(r)}\s{S}^{2n+1}$, and thus, if we fix $r\in(0,\pi/4)$,
\[
J X^{*}_{\gamma(r)}=-\cos(r) J d\pi_{\tilde{\gamma}(r)}(e_3)=-\cos(r)	d\pi_{\tilde{\gamma}(r)}(ie_3)=\cot(r)\big(-\sin(r)	d\pi_{\tilde{\gamma}(r)}(ie_3)\big)=\cot(r)Y^{*}_{\gamma(r)}.
\]	
However, by the ansatz \eqref{eq:ansatz}, $g(X^{*}_{\gamma(r)},X^{*}_{\gamma(r)})=f_1^2(r)$ and $g(Y^{*}_{\gamma(r)},Y^{*}_{\gamma(r)})=f_2^2(r)$, and if $g$ 
	is Hermitian with respect to $J$, we have
	\[
	f_1^2(r)=g(X^{*}_{\gamma(r)},X^{*}_{\gamma(r)})=g(JX^{*}_{\gamma(r)},JX^{*}_{\gamma(r)})=\cot^2(r)\,g(Y^{*}_{\gamma(r)},Y^{*}_{\gamma(r)})=\cot^2(r)\,f_2^2(r).
	\]
	Since $f_1(r),f_2(r)>0$ and $\cot(r)>0$ for  $r\in(0,\pi/4)$, we get $f_1(r)/f_2(r)=\cot(r)$. 
\end{proof}

\begin{proposition} \label{prop:notHermitian}
Let $g$ be the metric constructed on \hyperref[th:main]{Main Theorem}. Then $g$ is not $J$-Hermitian.
\end{proposition}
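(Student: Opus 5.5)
The plan is to argue by contradiction using Lemma~\ref{lem:ratio-hermitian}. Suppose $g$ is $J$-Hermitian. Then on $(0,\pi/4)$ the ratio $f_1/f_2$ equals $\cot(r)$, where $r$ is the Fubini--Study distance to $\R\s P^n$. The point is that this forces $g$ to behave like the Fubini--Study metric at the singular orbit $\s Q_{n-1}$. On the other hand, the smoothness conditions \eqref{eq:smoothquadric} together with our parameters $(\alpha_{-2},\beta_{-1})$ prevent this.

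Here is how the comparison goes. As $r\to\pi/4^-$ (the complex quadric), $\cot(r)\to 1$ and $\cot$ is smooth with nonzero derivative $-2$ at $\pi/4$. The correspondence $t\mapsto r(t)$ between the $g$-arclength and the FS-distance parametrizations of the orbit space is a monotone homeomorphism $[0,T]\to[0,\pi/4]$. Using the hypothesis therefore gives $a/b=f_1^2/f_2^2=\cot^2(r(t))$, which tends to $1$ at $t=T$.

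In the $H$-variables near $H=+\infty$ we have $a/b=(\alpha+\beta)/(\alpha-\beta)$. Since $\alpha=\alpha_{-2}H^2+O(1)$ and $\beta=\beta_{-1}H+O(H^{-1})$, this gives $a/b=1+2\beta_{-1}/(\alpha_{-2}H)+O(H^{-2})$, and the limit $1$ is automatic. So the contradiction must come from a finer invariant that does not depend on the reparametrization $r(t)$. I would use the intrinsic ratio
\[
\frac{f_1/f_2-1}{h}.
\]
For any $J$-Hermitian metric in the ansatz this can be related to the FS quantity $(\cot r-1)/\cos(2r)\to 1/2\cdot(\ldots)$ after adjusting by the rate $dr/dt$. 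Because this is awkward, I would instead pick an invariant at $\R\s P^n$, where everything is cleaner.

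As $r\to0$, $\cot(r)\sim 1/r$, so Hermitian forces $f_2/f_1\sim r$. Smoothness \eqref{eq:smoothproj} gives $f_2\sim t$ and $f_1(0)=h(0)=\xi$, hence $r\sim t/\xi$. The next step is to compare second-order terms. From $a,b$ we know $f_2^2/f_1^2=\tan^2 r$ exactly. In terms of the series \eqref{eq:seriesRP}, this ratio is $b/a$, an explicit even series in $\rho=-1/H$ whose $\rho^2$ and $\rho^4$ coefficients are determined by $b_2,u_1$ via \eqref{eq:a2u1}, \eqref{eq:rec-b}.

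A more robust route is to also exploit that the Hermitian condition holds along the whole geodesic, with $J$ mapping $\g n_1$-Killing fields to $\g n_2$-Killing fields. In addition $J\dot\gamma$ must be proportional to the $\g n_0$ Killing field $E_{1,2}^*$, since $J\dot\gamma$ is $\s H$-invariant and tangent to the orbit. For the FS lift, $E_{1,2}\cdot\widetilde\gamma(r)=$ a vector that projects to $\cos(2r)\,J\dot\gamma$ up to sign. Hence $h(t)=g(E_{1,2}^*,E_{1,2}^*)^{1/2}=\cos(2r)\,|J\dot\gamma|_g\cdot|dr/dt|^{-1}$-type relations, i.e.\ $h=\cos(2r)\,dr/dt$. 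Combined with $f_2=\sin(r)\,\phi$ and $f_1=\cos(r)\,\phi$ for a common function $\phi$ (from the lemma), this shows that the metric is determined by $r(t)$ and $\phi(t)$ alone.

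I would plug this reduced two-function Hermitian ansatz into \eqref{eq:E1}--\eqref{eq:E4} and derive a necessary algebraic relation among the initial data at $t=0$. Concretely, this means computing what $b_2$ and $u_1$ must be, or a relation between them. Equivalently, compare with $\lim_{t\to0}h''/h=\frac1n(u_1+\frac{b_2}{n-1}-2(n+1))$ from the proof of the Main Theorem. For the Hermitian ansatz $h=\cos(2r)r'$, $f_1=\cos(r)\phi$, one can express $u_1,b_2$ via Taylor coefficients of $r,\phi$, and the expected outcome is a rigid relation satisfied only by the Fubini--Study values. Finally, I would check numerically with the enclosures of Table~\ref{table:x0_values} (radius $\delta\le10^{-11}$) that our $(b_2,u_1)$ violate that relation.

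The main obstacle is justifying $J\dot\gamma\parallel E_{1,2}^*$ with the exact factor $\cos(2r)\,dr/dt$, and then extracting a clean, checkable relation on $(b_2,u_1)$ from the low-order expansion.
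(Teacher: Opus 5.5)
There is a genuine gap: your proposal never reaches a contradiction, and the argument that closes it is one you already had and set aside. For $t\in(0,T)$ the orbit through $\gamma(t)$ is principal, so its Fubini--Study distance to $\R\s P^n$ lies in $(0,\pi/4)$. Lemma~\ref{lem:ratio-hermitian} therefore forces $a/b=f_1^2/f_2^2=\cot^2 r>1$ on all of $(0,T)$, however $r$ depends on $t$. Your own expansion $a/b=1+\frac{2\beta_{-1}}{\alpha_{-2}H}+O(H^{-2})$ gives $a/b<1$ for large $H$. Indeed, the zero certified in Lemma~\ref{lem:matchingzero} has $\alpha_{-2}^\star>0$ and $\beta_{-1}^\star\le\beta_{-1}+\delta<0$ (Table~\ref{table:x0_values}: $\beta_{-1}<-\tfrac{1}{100}$, $\delta\le 10^{-11}$). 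So the sign of $a/b-1$ is exactly the reparametrization-independent invariant you were looking for, and stopping at ``the limit $1$ is automatic'' was the mistake. This is the paper's proof in other variables. The paper shows $f_1>f_2$ on $(0,T)$, hence $f_1'(T)\le 0$ because $f_1(T)=f_2(T)$ and $f_1'(T)=-f_2'(T)$. It then computes $f_1'(T)=-\beta_{-1}^\star/\sqrt{\alpha_{-2}^\star}>0$.

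The route you pursue instead is unfinished and, as written, rests on an incorrect formula. We have $E_{1,2}^*=\cos(2r)\,J\dot\gamma_{\mathrm{FS}}$ and $\dot\gamma=r'(t)\,\dot\gamma_{\mathrm{FS}}$ with $|J\dot\gamma|_g=1$. Hence a $J$-Hermitian $g$ has $h=\cos(2r)/r'$, not $h=\cos(2r)\,r'$. Your version would impose spurious conditions such as $h(0)=f_1(0)=1$ and $r'(T)=1$, so any ``rigid relation'' derived from it would be unsound. With the correct formula, eliminating $r$ through $\cot r=f_1/f_2$ gives $h\,(f_1/f_2)'=1-(f_1/f_2)^2$. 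Its $t^2$-coefficient at $\R\s P^n$, using $f_2'''(0)=-1/\xi^2$ (which follows from \eqref{eq:E1} and \eqref{eq:E4}), yields $u_1=\frac{1}{n+2}\bigl(\frac{(n-2)b_2}{n-1}-4(n^2-1)\bigr)$. This is a curve through the Fubini--Study point $(b_2,u_1)=\bigl((n-1)^2,-3(n-1)\bigr)$, not a condition singling out that point. The certified parameters do violate it: for $n=4$ it predicts $u_1\approx-5.66$, against the actual $u_1\approx 6.03$. So this route can be completed, but that computation is precisely the step your proposal only anticipates.
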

\begin{proof}
    We claim that if $g$ is $J$-Hermitian then $f_1'(T)\leq0$.  By Lemma~\ref{lem:ratio-hermitian},  $f_1(r)/f_2(r) = \cot (r)$. Then $r(t) = \mathrm{arccot} (f_1(t)/f_2(t))$, therefore $r'(t) = \frac{-1}{1 + (f_1(t)/f_2(t))^2} \left( f_1(t) / f_2(t) \right)'$. Notice that $r(t)$ is strictly monotone in $t$ because $r(t)$ is the Fubini-Study distance from $\R\s P^n$ along the geodesic (with respect to $g$) denoted by $\gamma$ which intersects each orbit once, see \S\ref{sec:so_n+1-action}. Then, $f_1(t)/f_2(t)$ is strictly monotone as well. Since $f_1(t)/f_2(t) \to +\infty $ as $t \to 0^+$, and $f_1(T)/f_2(T) =  1$  from \eqref{eq:smoothproj} and \eqref{eq:smoothquadric}, strict monotonicity of $f_1(t)/f_2(t)$ implies that $f_1(t) > f_2(t)$ for all $t \in (0, T)$. Since we also know from \eqref{eq:smoothquadric} that $f_1'(T) = -f_2'(T)$, this implies that $f_1'(T) \leq 0 \leq  f_2'(T)$.

Now we show that our metric $g$ satisfies $f_1'(T)>0$, contradicting the assumption that $g$ is $J$-Hermitian. Recall from~\eqref{eq:uvdef} that $f_1'/f_1 = u + h'/h$ and $f_2'/f_2 = v + h'/h$. Tracing $\mathcal S_t$ in~\eqref{eq:eigshape}, we obtain $(2n-1)h'/h = -H - (n-1)(u+v) = -H - 2(n-1)\mu$, where we have used~\eqref{eq:greekvariables}. Hence $f_1'/f_1 = \eta + \frac{\mu - H}{2n-1}$, and since $\mu_{-1} = 1$ (see~\eqref{eq:firstcoef-quad}), by Proposition~\ref{prop:smoothH} we have $\mu - H = \mu_{1}H^{-1} + O(H^{-3})$. Therefore, by \eqref{eq:firstcoef-quad}, $\lim_{t \to T^-} f_1'/f_1 = \eta_0 =-\beta_{-1}^\star/(2\alpha_{-2}^\star)$. By \eqref{eq:helsinki}, we deduce that $f_1(T)^2 = 4\alpha_{-2}^\star$. Thus, $f_1'(T) = -\beta_{-1}^\star / \sqrt{\alpha_{-2}^\star}$, which (since $\alpha_{-2}^\star > 0$) shares the sign of $-\beta_{-1}^\star$. By Lemma~\ref{lem:matchingzero}, $\beta_{-1}^\star\in[\beta_{-1}-\delta,\beta_{-1}+\delta]$ with $\delta$ and $\beta_{-1}$ as in Table~\ref{table:x0_values}. Since $\beta_{-1}<-\tfrac{1}{100}$ for every $n\in\{3,4,5,6,7\}$, we have $\beta_{-1}^\star<0$, so $f_1'(T)=-\beta_{-1}^\star/\sqrt{\alpha_{-2}^\star}>0$, contradicting $g$ being $J$-Hermitian.\qedhere
\end{proof}

\begin{remark}
\label{rem:nonNK}
The metrics in \hyperref[th:main]{Main Theorem} are not Hermitian, and thus they are not conformally Kähler. Moreover, they are not nearly Kähler when $n\ge4$. The complex projective space $\C\s P^n$ is a simply connected manifold that is not topologically a product; this can be seen by looking at its cohomology ring which is isomorphic to $\mathbb{Z}[x]/(x^{n+1})$. Recall that, on the one hand, by~\cite{Na02} an irreducible simply connected strictly nearly Kähler manifold, i.e.\ non-Kähler, belongs to one of the following three classes:
\begin{enumerate}
	\item[\textup{(a)}] a $6$-dimensional nearly Kähler manifold,
	\item[\textup{(b)}] homogeneous nearly Kähler spaces,
	\item[\textup{(c)}] twistor spaces over quaternionic Kähler manifolds.
\end{enumerate}
On the other hand, the LeBrun--Salamon conjecture, which states that every positive quaternionic Kähler manifold is symmetric, has been proved in real dimensions $8$, $12$, and $16$, see~\cite{PS91,LS94,HH02,BW22}. As the twistor spaces over these quaternionic Kähler manifolds are homogeneous, this implies that $\C\s P^{n}$ with $n\in\{4,5,6,7\}$ cannot carry an inhomogeneous nearly Kähler metric.

Recall that in dimension 6, the nearly Kähler condition is rather special and it implies Einstein. Indeed, in $\C\s P^3$, there is another non-integrable almost complex structure $\tilde{J}$ which can be obtained from the standard complex structure $J$ of $\C\s P^3$ by changing the sign of $J$ when applied to the vertical distribution of the twistor fibration $\s S^2\rightarrow\C\s P^3\rightarrow\H\s P^1$. Notice that cohomogeneity-one actions preserving the non-integrable almost complex  structure $\tilde{J}$ of $\C\s P^3$  were classified in~\cite{PK10}. In the case of $\C\s P^3$, there is only one such cohomogeneity-one action, given by $\s{SU}_2\times\s{SU}_2$, which has singular orbits diffeomorphic to $\s{S}^2$.  However, for $n=3$, the action we consider has as singular orbits $\R\s P^3$ and $\s{S}^2\times\s{S}^2$, see~\S\ref{sec:so_n+1-action}.  Consequently, the metrics in \hyperref[th:main]{Main Theorem} are not nearly Kähler with respect to $\tilde{J}$. It is interesting to point out that Foscolo and Haskins~\cite{FH17} conjectured that every cohomogeneity-one nearly Kähler metric on $\C\s P^3$ is homogeneous. 
\end{remark}

\newpage
\appendix
\section{Numerics and plots}\label{sec:appendix_A}

To carry out our proof, we need to find approximate zeros of $\mathcal{G}(x)$, as the ones in Table \ref{table:x0_values}. To do that, one can simply implement the function $\mathcal{G}(x)$ following the  series from Lemmas \ref{lem:recRPn}, \ref{lem:recQn-1} and solve the ODEs \eqref{eq:rp-ode}, \eqref{eq:masterEQ_expquadric} to compute $\mathcal{G}$. This is significantly simpler than the implementation done in Lemma \ref{lem:matchingzero}, since we are doing numerics, and no rigorous control is needed here (either on the series remainder or for the ODE solver). Then, we perform a classical Newton method to find the zeros of $\mathcal{G}(x)$, with different initialization parameters $x$. Some initialization parameters converge to zeros of $\mathcal{G}(x)$, and for $n \in \{ 3, 4, 5, 6, 7 \}$, we observe two different zeros (corresponding to the Fubini-Study metric and the metric from our \hyperref[th:main]{Main Theorem}). In contrast, we observe no new metrics for $n \geq 8$. Numerical exploration of the $n = 2$ case also confirms no new metrics there (proved by Broder's result \cite{Br26}). 

After finding those approximate $x_0$ with $\mathcal{G}(x_0) \approx 0$, we use those values of $(b_2, u_1, \alpha_{-2}, \beta_{-1})$ to generate the corresponding solutions $(a, b, u, v)$ to \eqref{eq:rp-ode} or $(\alpha, \beta, \eta, \mu)$ of \eqref{eq:masterEQ_expquadric}. Then, Proposition \ref{prop:equivalence} (and Remark \ref{rem:quadricvars}) allow us to compute $f_1, f_2, h$. We show the numerical values of $(b_2, u_1, \alpha_{-2}, \beta_{-1})$ in Table \ref{table:x0_values} and a plot of $f_1, f_2, h$ for the metrics of \hyperref[th:main]{Main Theorem} in Figure \ref{fig:plots}.

\begin{table}[ht]
  \centering
  \small
  \begin{tabular}{|c|l|l|l|l|l|}
    \hline
    $n$ & \multicolumn{1}{c|}{$b_2$} & \multicolumn{1}{c|}{$u_1$} & \multicolumn{1}{c|}{$\alpha_{-2}$} & \multicolumn{1}{c|}{$\beta_{-1}$} & $\delta$ \\
    \hline
    $3$ & $21.090615289608624$ & $4.2769888845669771$ & $0.0248433475704649$ & $-0.045831878718430$ & $10^{-14}$\\
    $4$ & $39.073179213574893$ & $6.0328404911983641$ & $0.0559943268417511$ & $-0.042848398677701$ & $10^{-11}$\\
    $5$ & $69.506625525589775$ & $8.4713521979792653$ & $0.0750978465264258$ & $-0.031246038856108$ & $10^{-12}$\\
    $6$ & $120.06632755687508$ & $12.289082277882205$ & $0.0868218947390250$ & $-0.020103748103872$ & $10^{-14}$\\
    $7$ & $229.69165123519819$ & $20.840270491124821$ & $0.0944735712466017$ & $-0.010001596495958$ & $10^{-14}$\\
    \hline
  \end{tabular}\vspace{0.2cm}
  \caption{Numerical values of the parameters $x_0=(b_2,u_1,\alpha_{-2},\beta_{-1})$ for each $n\in\{3,4,5,6,7\}$, as used in Lemma~\ref{lem:matchingzero}. We also include the value of $\delta$ we take to define $b_2^I, u_1^I, \alpha_{-2}^I$ and $\beta_{-1}^I$.}
  \label{table:x0_values}
\end{table}

Finally, let us recall the notion of Yamabe energy displayed in Table~\ref{table:yamabe}. Given a closed Riemannian manifold $(M, g)$ of dimension $m$, the Yamabe energy of the conformal class $[g]$ is 
\begin{equation*}
Y([g]):=\inf_{\tilde g\in[g]}\ \frac{\int_M \mathrm{scal}(\tilde g)\, d\mathrm{vol}_{\tilde g}}{\mathrm{Vol}(M,\tilde g)^{\frac{m-2}{m}}}.
\end{equation*}
Since $\C\s P^n$ with $n\ge 2$ is not diffeomorphic to the round sphere, the resolution of the Yamabe problem together with~\cite[Proposition~6.2]{Ob72} implies that if $g$ is Einstein, then
\begin{equation*}
Y([g]) = 2n\,\lambda\,\mathrm{Vol}(\C\s P^n, g)^{1/n},
\end{equation*}
which we evaluate using the volume formula \eqref{eq:volformula} applied to our numerical solutions.

\begin{table}[ht]
  \centering
  \small
  \begin{tabular}{|l|r|r|r|r|}
    \hline
    $n$ & $T$ & $Y([g])$ & $Y([g])/Y([g_{\mathrm{FS}}])$ & $Y([g_{\mathrm{Z}}])/Y([g_{\mathrm{FS}}])$ \\
    \hline
    $3$ & $1.5899$ & $64.601$  & $0.7785$ & $0.9921$ \\
    $4$ & $1.4951$ & $97.559$  & $0.8592$ & --- \\
    $5$ & $1.4575$ & $130.010$ & $0.8984$ & $0.9811$ \\
    $6$ & $1.4493$ & $162.420$ & $0.9213$ & --- \\
    $7$ & $1.4752$ & $194.899$ & $0.9361$ & $0.9741$ \\
    \hline
    Fubini-Study & $\pi/4$ & $\dfrac{4n(n+1)\pi}{(n!)^{1/n}}$ & $1$ & --- \\
    \hline
  \end{tabular}
  \vspace{0.2cm}
  \caption{Numerically computed values of $T$, the Yamabe energy
  $Y([g])$ of the Einstein metrics from \hyperref[th:main]{Main Theorem} and its ratio to the Yamabe energy of the Fubini-Study metric. The last column gives, in the odd dimensions where Ziller's metric $g_{\mathrm{Z}}$ exists, the ratio $Y([g_{\mathrm{Z}}])/Y([g_{\mathrm{FS}}])$. Interestingly the ratios between $g$ and $g_{\mathrm{FS}}$ increase with dimension and the ratios between $g_Z$ and $g_{\mathrm{FS}}$ decrease. }
  \label{table:yamabe}
\end{table}
\begin{figure}[h!]
\centering
\includegraphics[width=0.9\textwidth]{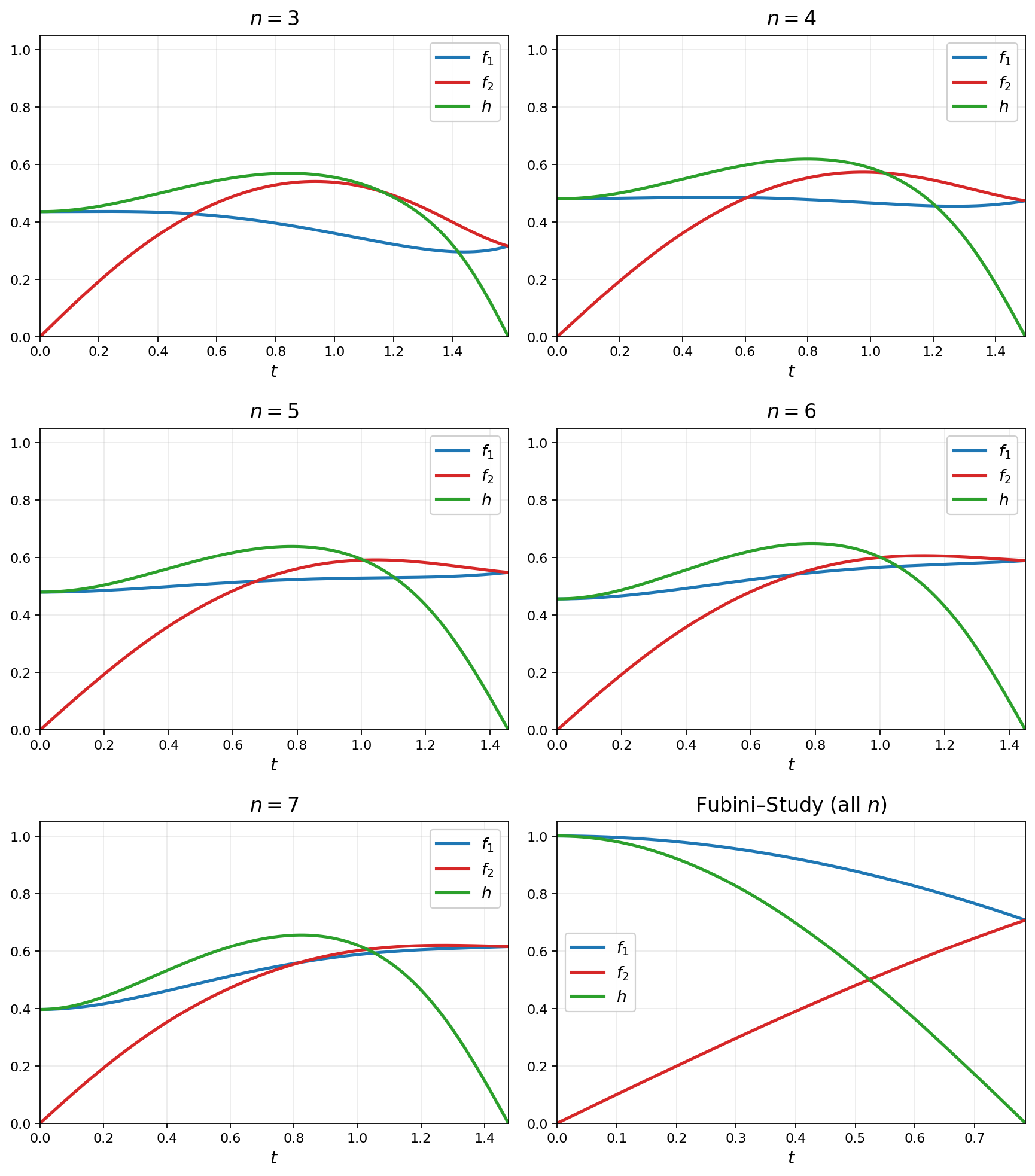}
\caption{We plot the functions $f_1, f_2, h$ corresponding to the new Einstein metrics from \hyperref[th:main]{Main Theorem} as well as for the Fubini-Study metric (for which $f_1, f_2, h$ are independent of $n$). The coordinate $t$ ranges between $0$ and $T$, with $T$ depending on the specific metric (and given in Table \ref{table:yamabe}). }
\label{fig:plots}
\end{figure}
 
\newpage
{\footnotesize  
\setlength{\bibsep}{0pt plus 0.3ex} 

}

\end{document}